\documentclass[12pt]{article}
\usepackage{amsmath}
\usepackage{amssymb}
\usepackage{mathrsfs}
\usepackage{amsthm}
\usepackage{rotating} 
\usepackage{authblk}
\usepackage{booktabs, longtable, array}
\usepackage{color}
\usepackage{graphicx}
\usepackage{caption}
\usepackage{subcaption}
\usepackage{soul}
\usepackage{bbm} 
\usepackage{ulem}
\usepackage{cancel} 
\usepackage{algorithm}
\usepackage{algorithmicx}
\usepackage{amssymb}
\usepackage{threeparttable}
\usepackage{titlesec}
\usepackage{tikz}
\usepackage{bbding}
\usepackage{pifont}
\usepackage{titletoc}
\usepackage{hhline}

\theoremstyle{definition}

\newtheorem{fact}{Fact}

\newtheorem{definition}{Definition}
\newtheorem{theorem}{Theorem}
\newtheorem{lemma}{Lemma}
 
\newtheorem{remark}{Remark}
\newtheorem{proposition}{Proposition}
\usepackage{hyperref}
\usepackage[margin=1.2in]{geometry}

\usepackage{hyperref}

\newcommand{\va}{\pmb{a}}
\newcommand{\vb}{\pmb{b}}
\newcommand{\vw}{\pmb{w}}

\newcommand{\x}{\pmb{x}}
\newcommand{\y}{\pmb{y}}
\newcommand{\z}{\pmb{z}}
\newcommand{\M}{\pmb{M}}
\newcommand{\X}{\pmb{X}}

\newcommand{\xu}{\pmb{u}}

\newcommand{\E}{\mathbb{E}}

\newcommand{\R}{\mathbb{R}}
\newcommand{\C}{\mathbb{C}}

\newcommand{\mA}{\mathcal{A}}
\newcommand{\mT}{\mathcal{T}}

\newcommand{\mF}{\mathcal{F}}

\def\lV{\left\lVert}
\def\rV{\right\lVert}
\def\lv{\left\lvert}
\def\rv{\right\lvert}
\def\lk{\left(}
\def\rk{\right)}
\def\lg{\langle}
\def\rg{\rangle}
\def\lz{\left[}
\def\rz{\right]}

\begin{document}

\title{Low-Rank Matrix Recovery via Heavy-Tailed Quadratic Sampling\footnote{This work was supported by
NSFC under grant number U21A20426}}

\author{Gao Huang\footnote{School of Mathematical Science, Zhejiang University, Hangzhou 310027, P. R. China, E-mail address: hgmath@zju.edu.cn}}	
\author{Song Li\footnote{School of Mathematical Science, Zhejiang University, Hangzhou 310027, P. R. China, E-mail address: songli@zju.edu.cn}}
\date{}
\affil{}
\renewcommand*{\Affilfont}{\small\itshape}
	
	\maketitle
	\begin{abstract}	
The problem of recovering an (approximately) low-rank Hermitian matrix $\M_0 \in \mathbb{C}^{n \times n}$ of rank $r$ from quadratic sampling matrices of the form $\{\va_k \va_k^*\}_{k=1}^m$ arises in a variety of applications, including phase retrieval. 
To obtain rigorous recovery guarantees, the sampling vectors $\{\va_k\}_{k=1}^m$ are typically modeled probabilistically. 
However, most existing theoretical results rely on Gaussian or sub-Gaussian assumptions, which may not accurately capture practical data models. 
In many applications, sampling vectors exhibit heavier tails, while theoretical understanding in such regimes remains scarce.

In this paper, we bridge this gap. We show that two widely used convex approaches, nuclear norm minimization and semidefinite-constrained empirical risk minimization, achieve uniform, stable, and robust recovery under the mild assumption that the entries of the sampling vectors have only finite $4+\delta$ moments, with the optimal sample complexity $m = \mathcal{O}\lk rn\rk$ up to moment-dependent constants.
The two main ingredients of our analysis are moment estimates for quadratic forms established via decoupling, together with recent advances in covariance estimation in heavy-tailed settings. 
As byproducts, we also establish the optimal sample complexity for low-rank matrix recovery under complex projective 
$4$-design sampling, thereby improving upon previous results, and obtain stability guarantees for phase retrieval under similarly weak moment assumptions.
      	\end{abstract}		
	{\bf Keywords: Low-Rank Matrix; Phase Retrieval; Heavy Tails; Covariance Estimation}

\section{Introduction}

The problem of recovering a low-rank matrix from a small number of linear measurements is a central topic in applied mathematics, statistics, electrical engineering, and computer science; see, e.g.,~\cite{recht2010guaranteed,chandrasekaran2012convex}. 
It arises in a variety of areas, including quantum tomography~\cite{gross2010quantum,flammia2012quantum,kueng2017low}, signal processing~\cite{ahmed2014compressive}, recommender systems~\cite{koren2009matrix}, and linear system identification and control~\cite{liu2010interior}. 
A prominent example is phase retrieval, which arises in a range of signal and imaging applications, including X-ray crystallography, astronomical imaging, and diffraction imaging~\cite{millane1990phase,shechtman2015phase}. 
In phase retrieval, the apparent obstacle posed by nonlinear magnitude-only measurements can be overcome by lifting the problem to a matrix space, an idea first introduced by Balan et al.~\cite{balan2009painless}.
This viewpoint later inspired the \textit{PhaseLift} approach of Candès et al.~\cite{candes2015phase1,candes2013phaselift}, which recasts phase retrieval as a low-rank matrix recovery problem.

Motivated by these applications and the close connection with phase retrieval, 
in this paper we study the recovery of an (approximately) low-rank Hermitian matrix $\M_0\in\mathcal{H}_n$ from the quadratic (i.e., rank-one) sampling model
\begin{equation}\label{eq:measurement}
    y_k=\left\langle \va_k\va_k^*,\M_0\right\rangle+\omega_k,\qquad k=1,\ldots,m.
\end{equation}
Here, $\mathcal{H}_n$ denotes the space of $n\times n$ complex Hermitian matrices, 
$\left\{\va_k\right\}_{k=1}^m$ are the sampling vectors, 
$\y:=\left\{y_k\right\}_{k=1}^m$ denotes the measurement vector 
and $\pmb{\omega}:=\left\{\omega_k\right\}_{k=1}^m$ denotes the measurement noise. 
When $\M_0=\x_0\x_0^*$ is rank-one for some $\x_0\in\C^n$, \eqref{eq:measurement} reduces to the intensity-only measurement model arising in phase retrieval~\cite{balan2009painless,candes2015phase1}.
To describe the setup more precisely, let $\mA:\mathcal{H}_n\to\R^m$ denote the linear map
\begin{equation}
    \mA\lk\M\rk=\left\{\left\langle \va_k\va_k^*,\M\right\rangle\right\}_{k=1}^m.
\end{equation}
Then \eqref{eq:measurement} can be written compactly as
\begin{equation}\label{eq:sample}
    \y=\mA\lk\M_0\rk+\pmb{\omega}.
\end{equation}

A prominent approach for recovering the matrix $\M_0$ from~\eqref{eq:sample} is nuclear norm minimization, formulated as the following convex program~\cite{candes2013phaselift,chen2015exact,cai2015rop,kueng2017low,kabanava2016stable,huang2025low,gilles2025stable}:
 \begin{equation}\label{nuclear}
\min_{\M\in\mathcal{H}_n} \,\lV\M\rV_{*}\quad\text{subject to}  \quad\lV\mA\lk\M\rk-\y\rV_{\ell_q}\le \eta,	 
\end{equation}
    where $\lV \M\rV_*$ denotes the nuclear norm of $\M\in\mathbb{C}^{n\times n}$, and $\eta$ is a known upper bound on the noise level, namely, $\lV \pmb{\omega}\rV_{\ell_q}\le \eta$.
    Here, for a vector $\x$, $\lV \x\rV_{\ell_q}$ denotes the usual $\ell_q$-norm.
In some situations, it is known a priori that the target matrix $\M_0$ is Hermitian positive semidefinite, i.e., $\M_0\succeq \pmb{0}$. 
In this case, one may replace the nuclear norm minimization program~\eqref{nuclear} with the semidefinite-constrained empirical risk minimization program~\cite{demanet2014stable,candes2014solving,krahmer2020complex,krahmer2022robustness,huang2025stable}:
\begin{equation}\label{PSD}
\min_{\M\in\mathcal{H}_n} \,\lV\mA\lk\M\rk-\y\rV_{\ell_q}\quad\text{subject to}  \quad\M\succeq \pmb{0},			 
\end{equation}
which is noise-blind in the sense that it does not require prior knowledge of the noise level.
Beyond these convex formulations, related algorithmic developments for the quadratic sampling model~\eqref{eq:measurement} include nonconvex matrix factorization methods~\cite{li2021nonconvex}, stochastic gradient-type algorithms~\cite{qin2024general}, hard-thresholding-type methods~\cite{foucart2019iterative,eisenmann2023riemannian}, and recent analyses of the nonconvex landscape of related formulations~\cite{mcrae2025nonconvex}.

From a theoretical perspective, the analysis of the quadratic sampling model~\eqref{eq:measurement} has so far relied predominantly on probabilistic assumptions on the sampling vectors.
Most existing results are established for Gaussian, or more generally sub-Gaussian, ensembles, both in low-rank matrix recovery and in the rank-one special case of phase retrieval~\cite{candes2013phaselift,candes2014solving,chen2015exact,cai2015rop,kueng2017low,kabanava2016stable,li2021nonconvex,krahmer2020complex,gao2021phase,kim2024robust,maunu2024acceleration,huang2026robust,huang2025stable,mcrae2025nonconvex}, or on structured sampling models such as coded diffraction patterns, convolutional ensembles and projective $t$-design ensembles~\cite{candes2015phase,qu2020convolutional,kueng2017low,li2025truncated}.
However, these idealized probabilistic models cannot fully characterize the sampling patterns arising in practical acquisition systems, and even structured models, though closer to certain acquisition mechanisms, do not encompass all sampling ensembles encountered in practice.
For instance, in practical ghost imaging, illumination patterns are often generated by experimentally constrained optical architectures and, in modalities such as X-rays, electrons, and neutrons, are commonly realized by transversely translating a fixed mask~\cite{kingston2023optimizing,aminzadeh2023mask}, leading to sampling ensembles that may deviate substantially from ideal Gaussian ensembles and may even exhibit heavy-tailed behavior.
These considerations motivate the study of recovery guarantees under substantially weaker distributional assumptions on the sampling vectors, which naturally leads to the following question:
\begin{itemize}
\item[] \textbf{\textit{Can one achieve recovery of a low-rank matrix from the quadratic sampling model~\eqref{eq:measurement} when the sampling vectors are heavy-tailed and satisfy only weak moment assumptions?}}
\end{itemize}

The main contribution of this paper is to provide an affirmative answer to this question by showing that such recovery guarantees remain valid under remarkably weak assumptions. 
More precisely, we prove that both nuclear norm minimization~\eqref{nuclear} and semidefinite-constrained empirical risk minimization~\eqref{PSD} achieve uniform, stable, and robust recovery at the optimal sample complexity $m=\mathcal{O}\lk rn\rk$ up to moment-dependent constants, provided that the sampling vectors have independent, suitably normalized entries with only finite $4+\delta$ moments. 
To the best of our knowledge, this is the first result at this level of generality for the quadratic sampling model~\eqref{eq:measurement} in the heavy-tailed regime.
Moreover, our analysis yields two further byproducts.
First, we establish optimal sample complexity guarantees for low-rank matrix recovery under complex projective $4$-design sampling, improving earlier results by removing an extra logarithmic factor~\cite{kueng2017low,kabanava2016stable}.
Second, our arguments also yield stability guarantees for phaseless operators arising in phase retrieval under similarly weak moment assumptions.
Finally, numerical experiments corroborate our theoretical findings and demonstrate the effectiveness of these convex recovery procedures in heavy-tailed sampling settings.

Our approach differs in several essential ways from existing analyses. 
Its main novelty lies in two technical ingredients tailored to the heavy-tailed setting, which allow us to remove the Gaussian or sub-Gaussian assumptions while retaining the rank null space property (rank NSP) framework~\cite{kabanava2016stable} combined with Mendelson's small ball method~\cite{koltchinskii2015bounding,tropp2015convex}.
The first ingredient is a decoupling-based moment estimate for quadratic forms, which replaces the standard arguments commonly used under sub-Gaussian assumptions~\cite{chen2015exact,krahmer2020complex}.
The second ingredient draws on recent advances in covariance estimation for heavy-tailed distributions~\cite{tikhomirov2018sample,abdalla2024covariance,jirak2025concentration}, which we adapt to control the empirical process terms arising in the small ball method.
Together, these ingredients yield uniform, stable, and robust recovery guarantees under weak moment assumptions in the heavy-tailed setting.
We believe that both ingredients may be of independent interest.

We introduce some notation that will be used throughout the paper.
We denote by $\mathbb{S}_{\mathbb{C}}^{n-1}$ and $\mathbb{S}^{n-1}$ the unit spheres in $\mathbb{C}^n$ and $\mathbb{R}^n$, respectively.
For a matrix $\M$, we write $\lV\M\rV_*$, $\lV\M\rV_F$, and $\lV\M\rV_{op}$ for its nuclear norm, Frobenius norm, and operator norm, respectively.
For an integer $r\ge 1$, $\M^r$ denotes the best rank-$r$ approximation of $\M$, and $\M^{r,c}:=\M-\M^r$ denotes the residual part.
If $\x\in\mathbb{C}^n$, then $\Re\lk\x\rk$ and $\Im\lk\x\rk$ denote its real and imaginary parts, respectively.
For a random variable $X$ and $p\ge 1$, we write
$\lV X\rV_{L_p}:=\lk \E\lv X\rv^p\rk^{1/p}$.
Finally, for two nonnegative real sequences $\{a_t\}_t$ and $\{b_t\}_t$, we write $b_t=\mathcal{O}(a_t)$ (or $b_t\lesssim a_t$) if there exists a constant $C>0$ such that $b_t\le Ca_t$, and write $b_t\gtrsim a_t$ if there exists a constant $c>0$ such that $b_t\ge ca_t$. 
Similarly, we write $a\lesssim_p b$ if there exists a constant $C_p>0$, depending only on $p$, such that $a\le C_pb$, and write $a\gtrsim_p b$ if there exists a constant $c_p>0$, depending only on $p$, such that $a\ge c_pb$.

The remainder of the paper is organized as follows. 
In Section~\ref{sec:main}, we present the main recovery guarantees for the two convex programs. 
Section~\ref{subsec:pre} reviews preliminaries on the rank NSP and Mendelson's small ball method. 
In Sections~\ref{subsec:quad} and~\ref{subsec:covariance}, we establish moment estimates for quadratic forms and carry out the covariance estimation analysis.
Sections~\ref{subsec:proof1} and~\ref{subsec:proof2} are devoted to the proofs of our main results. 
In Section~\ref{sec:tdesign}, we present recovery results for complex projective $4$-design sampling. 
Section~\ref{sec:stability} is concerned with stability results for phase retrieval. 
Finally, Section~\ref{sec:experiments} presents numerical experiments that corroborate our theoretical findings.

\section{Main Results}\label{sec:main}

In this section, we present the main results of the paper.
We begin with the nuclear norm minimization program~\eqref{nuclear} for the quadratic sampling model~\eqref{eq:sample}. 
The following theorem shows that uniform, stable, and robust recovery of low-rank Hermitian matrices remains possible in the heavy-tailed setting under merely $4+\delta$ moment assumptions on the sampling vectors.

\begin{theorem}\label{thm:heavy1}
Let $\delta>0$ and $q\ge 1$.
Consider the noisy measurement process in~\eqref{eq:sample} where $\lV\pmb{\omega}\rV_{\ell_q} \le \eta$, with $m$ sampling matrices of the form $\left\{\va_k\va_k^*\right\}_{k=1}^m$.
Assume that $\left\{\va_k\right\}_{k=1}^m$ are independent copies of a random vector 
$\va \in \C^n$ with independent mean-zero, variance-one entries $\{a_i\}_{i=1}^n$ satisfying
\begin{equation}\label{eq:condition}
\alpha_{4+\delta}=\max_i\E\lz\lv a_i\rv^{4+\delta}\rz<\infty,\quad \beta=\min_i\E\lz\lv a_i\rv^4\rz>1,\quad\gamma=\max_i\lv\E\lz a_i^2\rz\rv<1.
\end{equation}
Set $\zeta=\min\{\beta-1,1-\gamma^2\}>0$.
Fix $r \le n$, and suppose that
\begin{equation*}
m \ge C_1\lk\delta\rk \cdot f \cdot rn.
\end{equation*}
Then, with probability at least $1-e^{-C_2 m \cdot g^2}$ it holds that for all $\M_0 \in \mathcal{H}_n$, any solution $\M^\sharp$ to the program~\eqref{nuclear} obeys
\begin{equation*}
\lV \M_0 - \M^\sharp\rV_F \le 
\frac{C_3}{\sqrt{r}} \lV\M_0^{r,c}\rV_{*} 
+ C_4\lk\delta\rk\frac{\eta}{h \cdot m^{1/q}}.
\end{equation*}
Here, $C_1\lk\delta\rk,C_4\lk\delta\rk$ are positive constants depending only on $\delta$, whereas $C_2$ and $C_3$ are positive universal constants.
Moreover, $f,g,h$ are constants given as
\begin{equation}\label{eq:fgh}
f=\frac{\alpha_{4+\delta}^{\frac{32+12\delta}{\lk4+\delta\rk\delta}}}{\zeta^{3+\frac{8}{\delta}}},
\quad g=\frac{\zeta^{4/\delta+1}}{\alpha_{4+\delta}^{4/\delta}},
\quad\text{and}\quad h=g\cdot\zeta^{1/2}.
 \end{equation} 
\end{theorem}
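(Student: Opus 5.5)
The proof will go through the rank null space property framework of \cite{kabanava2016stable}. It suffices to establish the robust rank NSP of order $r$ for $\mA$ with respect to $\ell_q$: for all $\M\in\mathcal{H}_n$,
\begin{equation*}
\lV\M^r\rV_F\le \frac{\rho}{\sqrt r}\lV\M^{r,c}\rV_*+\tau\lV\mA\lk\M\rk\rV_{\ell_q},
\end{equation*}
with $\rho<1$ a universal constant and $\tau\lesssim_\delta 1/(h\, m^{1/q})$. The standard NSP argument then gives the stated bound for any minimizer of~\eqref{nuclear}, with $C_3$ depending only on $\rho$ and $C_4(\delta)\propto\tau h m^{1/q}$.

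Since $\lV\cdot\rV_{\ell_q}\ge m^{1/q-1}\lV\cdot\rV_{\ell_1}$, it is enough to give a lower bound on $\frac1m\sum_k|\lg\va_k\va_k^*,\M\rg|$ uniformly over the set
\begin{equation*}
T_r=\left\{\M:\lV\M\rV_F=1,\ \lV\M\rV_*\le C\sqrt r\right\},
\end{equation*}
which contains every $\M$ that violates the NSP inequality after rescaling. For this I will use Mendelson's small ball method: with probability at least $1-e^{-2t^2}$,
\begin{equation*}
\inf_{\M\in T_r}\frac1m\sum_k|\lg\va_k\va_k^*,\M\rg|\ge \xi Q_{2\xi}(T_r)-\frac{2}{\sqrt m}W_m(T_r)-\frac{\xi t}{\sqrt m},
\end{equation*}
where $W_m(T_r)=\E\sup_{\M\in T_r}\lg \frac{1}{\sqrt m}\sum_k\epsilon_k\va_k\va_k^*,\M\rg$.

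The small-ball step comes first. I expand $\lg\va\va^*,\M\rg=\sum_{i,j}\bar a_i M_{ij}a_j$ and compute its second moment using independence. The conditions $\beta>1$ and $\gamma<1$ give
\begin{equation*}
\E|\lg\va\va^*,\M\rg|^2\ge|\mathrm{tr}\,\M|^2+\zeta\lV\M\rV_F^2\ge\zeta\lV\M\rV_F^2:
\end{equation*}
the diagonal terms contribute through $\E|a_i|^4-1$, and the off-diagonal terms through $1-|\E a_i^2|^2$. Next, the decoupling-based moment estimate for quadratic forms (Section~\ref{subsec:quad}) bounds the $L_{2+\delta/2}$ norm, $\lV\lg\va\va^*,\M\rg\rV_{L_{2+\delta/2}}\lesssim_\delta \alpha_{4+\delta}^{c}\lk\lv\mathrm{tr}\,\M\rv+\lV\M\rV_F\rk$. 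Paley--Zygmund in its $L_2$ versus $L_{p}$ form then yields $Q_{2\xi}\gtrsim$ a power of $\zeta/\alpha_{4+\delta}$ at the threshold $\xi\asymp\zeta^{1/2}$. The trace term needs care. I plan to handle it by separating the mean, i.e.\ applying Paley--Zygmund to the centered variable or noting that a large trace only helps the lower bound. This is where $g$ and $h=g\zeta^{1/2}$ should come from.

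For the empirical process, by duality $W_m(T_r)\le C\sqrt r\,\E\lV\frac1{\sqrt m}\sum_k\epsilon_k\va_k\va_k^*\rV_{op}$. The required bound is $\E\lV\frac1m\sum_k\epsilon_k\va_k\va_k^*\rV_{op}\lesssim_\delta \mathrm{poly}(\alpha)\sqrt{n/m}$ when $m\gtrsim n$. This symmetrized covariance-type bound under only $4+\delta$ moments is the main obstacle. Standard matrix Bernstein arguments lose logarithms, and the sub-Gaussian arguments fail. I will first try to adapt the heavy-tailed covariance estimation results of Section~\ref{subsec:covariance} (Tikhomirov, Abdalla--Zhivotovskiy), which require exactly $L_4$--$L_2$ norm equivalence of one-dimensional marginals plus slightly more. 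If the symmetrized version is not directly covered, I will bound $\sup_{\x}\frac1m\sum_k\epsilon_k|\lg\va_k,\x\rg|^2$ by splitting into a truncated part, handled by a chaining or net argument, and a tail part controlled via the $4+\delta$ moment. Finally, choosing $m\ge C_1(\delta) f rn$ makes $\frac{2}{\sqrt m}W_m\le\frac14\xi Q_{2\xi}$, and choosing $t\asymp\sqrt m\,Q$ gives the probability $1-e^{-C_2mg^2}$. Tracking the exponents then produces the stated $f$, $g$, $h$.
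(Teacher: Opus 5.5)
Your architecture is the paper's: rank NSP via Lemmas~\ref{lm:NSP} and~\ref{lm:diag}, the small ball inequality, the lower bound $\E|\va^*\M\va|^2\ge(\operatorname{Tr}\M)^2+\zeta\|\M\|_F^2$ from Lemma~\ref{lm:E^2}, Paley--Zygmund between $L_2$ and $L_{2+\delta/2}$ fed by Proposition~\ref{thm:4order}, the duality bound $W_m\lesssim\sqrt r\,\E\|\cdot\|_{op}$, and the choices $\xi\asymp\zeta^{1/2}$, $t\asymp\sqrt m\,g$.

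On the trace term, only your second option is needed. The centering option does not work on its own, because a lower bound on $\mathbb{P}(|\va^*\M\va-\operatorname{Tr}\M|\ge c)$ says nothing about $\mathbb{P}(|\va^*\M\va|\ge 2\xi)$. With Proposition~\ref{thm:4order}, the "large trace only helps" observation is immediate. For $\|\M\|_F=1$ one has $\E|\va^*\M\va|^{2+\delta/2}\lesssim_\delta|\operatorname{Tr}\M|^{2+\delta/2}+\alpha_{4+\delta}$. So the numerator and denominator of the ratio in~\eqref{eq:PZ} both grow like $|\operatorname{Tr}\M|^{2(4+\delta)/\delta}$, and the ratio is $\gtrsim_\delta\zeta^{(4+\delta)/\delta}/\alpha_{4+\delta}^{4/\delta}=g$ uniformly.

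The genuine gap is the step you yourself call the main obstacle, which the proposal leaves as a plan. What you need is exactly Theorem~\ref{thm:chaos}, and it must carry the specific factor $\alpha_{4+\delta}^{2/(4+\delta)}$. Any other power of $\alpha$ changes $f$, since $f=\alpha_{4+\delta}^{4/(4+\delta)}/h^2$. Your fallback of truncation plus a net or chaining argument is not a viable substitute as sketched. With only $4+\delta$ moments, controlling the tail part uniformly over all directions without logarithmic loss is precisely the hard content of Tikhomirov's theorem (Lemma~\ref{lm:covariance}). So you should commit to the covariance route.

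The Rademacher signs are not the difficulty. Write $\sum_k\varepsilon_k\va_k\va_k^*=\sum_k\varepsilon_k(\va_k\va_k^*-\pmb{I}_n)+(\sum_k\varepsilon_k)\pmb{I}_n$; the symmetrization inequality then reduces everything to $\E\|\frac1m\sum_k\va_k\va_k^*-\pmb{I}_n\|_{op}$. What you actually have to supply is the following:
(i) the marginal condition $\sup_{\x}\E|\langle\va,\x\rangle|^p\lesssim_p\alpha_p$, which follows from scalar Rosenthal;
(ii) passage to $\C^n$ via an independent uniform phase $\vb=e^{\mathrm{i}\theta}\va$, needed because $\E\,\va\va^\top\neq\pmb 0$ when $\gamma>0$, together with the identity $\mathcal R(\vb\vb^*)=\widetilde{\vb}\widetilde{\vb}^\top+\widehat{\vb}\widehat{\vb}^\top$;
(iii) conversion of the high-probability bound into an expectation bound, since Lemma~\ref{lm:chaos} holds only with probability $1-e^{-2n}-\tilde c(p)/m$. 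The paper does this with H\"older on the failure event plus the matrix Rosenthal inequality of Lemma~\ref{lm:heavymatrix}; its logarithmic losses are absorbed by $\mathbb{P}(\mathcal{E}^c)^{1/2}$ once $m\gtrsim\log^2 n$;
(iv) the estimate $\E\max_k\|\va_k\|_2^2\le n+\sqrt{\alpha_4 mn}$, which turns the $\max_k\|\va_k\|_2^2/m$ term into $n/m+\sqrt{\alpha_4 n/m}$, with $\alpha_4^{1/2}\le\alpha_{4+\delta}^{2/(4+\delta)}$.

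With these in place, the rest of your outline goes through and reproduces $f$, $g$, $h$.
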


We next consider the semidefinite-constrained empirical risk minimization program~\eqref{PSD}, which is noise-blind. 
The following theorem shows that it enjoys a similar recovery guarantee in the heavy-tailed quadratic sampling setting.

\begin{theorem}\label{thm:heavy2}
Let $\delta>0$ and $q\ge 1$.
Consider the noisy measurement process in~\eqref{eq:sample} with $m$ sampling matrices of the form $\left\{\va_k\va_k^*\right\}_{k=1}^m$.
Here $\left\{\va_k\right\}_{k=1}^m$ are independent copies of a random vector 
$\va \in \C^n$ with independent mean-zero, variance-one entries $\{a_i\}_{i=1}^n$.
Assume that $\{a_i\}_{i=1}^n$ satisfy~\eqref{eq:condition}, and let $\zeta$ be defined as in Theorem~\ref{thm:heavy1}.
Fix $r \le n$, and suppose that
\begin{equation*}
m \ge C_1\lk\delta\rk \cdot f \cdot rn.
\end{equation*}
Then, with probability at least $1-e^{-C_2 m\cdot g^2}-e^{-2n}-\frac{1}{10m^{\delta/4}}-\frac{C_3\lk\delta\rk}{m}$ it holds that for all $\M_0 \succeq \pmb{0}$, any solution $\M^\sharp$ to the program~\eqref{PSD} obeys
\begin{equation*}
\lV \M_0 - \M^\sharp\rV_F \le 
\frac{C_4}{\sqrt{r}} \lV\M_0^{r,c}\rV_{*} 
+ C_5\lk\delta\rk\frac{\lV\pmb{\omega}\rV_{\ell_q}}{h\cdot m^{1/q}}.
\end{equation*}
Here, $C_1\lk\delta\rk,C_3\lk\delta\rk,C_5\lk\delta\rk$ are positive constants depending on $\delta$, whereas $C_2$ and $C_4$ are positive universal constants.
The constants $f,g,h$ are those defined in~\eqref{eq:fgh} and depend only on $\alpha_{4+\delta}$, $\zeta$, and $\delta$.
\end{theorem}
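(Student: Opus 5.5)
Theorem~\ref{thm:heavy2} reduces to the same robust rank NSP that drives Theorem~\ref{thm:heavy1}, plus one extra ingredient available only in the PSD setting. Roughly, $\mathcal{A}$ has a strictly positive ``mean direction'' on the cone, which lets $\lV\mA(\M)\rV_{\ell_q}$ control the nuclear norm of the negative part of any feasible perturbation. So I would first invoke the event already established in the proof of Theorem~\ref{thm:heavy1}. On that event, with probability at least $1-e^{-C_2 m g^2}$, $\mA$ satisfies an $\ell_q$-robust rank NSP of order $r$: for all $\M\in\mathcal{H}_n$,
\[
\lV\M^r\rV_F\le \frac{\rho}{\sqrt r}\lV\M^{r,c}\rV_*+\frac{\tau}{m^{1/q}}\lV\mA(\M)\rV_{\ell_q},
\]
with $\rho<1$ universal and $\tau\lesssim_\delta 1/h$. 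This came from Mendelson's small ball method: the small-ball lower bound uses the quadratic-form moment estimates together with $\zeta>0$, and the empirical process term is handled by heavy-tailed covariance estimation. The standard NSP machinery then gives, for $\M_0,\M$ arbitrary,
\[
\lV\M-\M_0\rV_F\lesssim \frac{1}{\sqrt r}\bigl(\lV\M\rV_*-\lV\M_0\rV_*+2\lV\M_0^{r,c}\rV_*\bigr)+\frac{\tau}{m^{1/q}}\lV\mA(\M-\M_0)\rV_{\ell_q}.
\]

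For a PSD minimizer $\M^\sharp$, the term $\lV\M^\sharp\rV_*-\lV\M_0\rV_*$ equals $\mathrm{tr}(\M^\sharp-\M_0)$. I would bound it by an operator-level inequality: with high probability, for every $\pmb{Z}\in\mathcal{H}_n$,
\[
\mathrm{tr}(\pmb{Z})\le \frac{c}{m}\sum_k\langle\va_k\va_k^*,\pmb{Z}\rangle+ c'\cdot(\text{NSP-controlled error}).
\]
The cleanest route is to show $\frac1m\sum_k\va_k\va_k^*\succeq \frac12 \pmb{I}$. That holds with probability $1-e^{-2n}$ once $m\gtrsim n$, by a one-sided heavy-tailed lower tail bound for the sample covariance. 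The lower tail only needs bounded fourth moments, via the Oliveira / Koltchinskii–Mendelson small-ball argument, and $\E\va\va^*=\pmb{I}$. This handles $\pmb{Z}\succeq0$ directly. For general $\pmb{Z}=\M^\sharp-\M_0$, whose negative part is dominated by $\M_0$, I would instead use the bound on the opposite side: $\mathrm{tr}(\pmb{Z})\le 2\langle\frac1m\sum\va_k\va_k^*,\pmb{Z}_+\rangle-\mathrm{tr}(\pmb{Z}_-)$. The contribution $\langle\frac1m\sum\va_k\va_k^*,\pmb{Z}_-\rangle$ is then controlled by the upper bound $\lV\frac1m\sum\va_k\va_k^*\rV_{op}\lesssim 1$. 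This upper bound is where the extra failure probabilities $\frac{1}{10m^{\delta/4}}+\frac{C_3(\delta)}{m}$ enter: they come from the truncation bound on $\max_k\lV\va_k\rV_2^2$ under $4+\delta$ moments, in the style of Tikhomirov's sample covariance result. The result is
\[
\mathrm{tr}(\pmb{Z})\lesssim \frac{1}{m}\bigl|\textstyle\sum_k\langle\va_k\va_k^*,\pmb{Z}\rangle\bigr|\le m^{-1/q}\lV\mA(\pmb{Z})\rV_{\ell_q}.
\]
Alternatively, the trace can be absorbed through the NSP itself.

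Finally, I would combine the pieces. Optimality of $\M^\sharp$ against the feasible point $\M_0$ gives $\lV\mA(\M^\sharp)-\y\rV_{\ell_q}\le\lV\pmb{\omega}\rV_{\ell_q}$, hence $\lV\mA(\M^\sharp-\M_0)\rV_{\ell_q}\le 2\lV\pmb{\omega}\rV_{\ell_q}$. Substituting this and the trace bound into the NSP consequence yields the stated error $\frac{C_4}{\sqrt r}\lV\M_0^{r,c}\rV_*+C_5(\delta)\frac{\lV\pmb\omega\rV_{\ell_q}}{h m^{1/q}}$. A union bound collects the failure probabilities.

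The main obstacle is the trace-control step under only $4+\delta$ moments. The lower spectral bound is easy, but keeping the constants universal needs care. If the direct operator-norm route degrades $h$, I would first try instead to use $\mathrm{tr}$ positivity only on $\pmb{Z}_+$, together with the inequality $\lV\pmb{Z}_-\rV_*\le\lV\M_0^{r,c}\rV_*+\sqrt r\lV\pmb{Z}\rV_F$, and feed that back into the NSP.
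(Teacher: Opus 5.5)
\textbf{The gap.} The asserted trace bound $\mathrm{tr}(\pmb{Z})\lesssim \frac1m\bigl|\sum_k\langle\va_k\va_k^*,\pmb{Z}\rangle\bigr|$ does not follow from your steps, and it is false. Write $\pmb{W}=\frac1m\sum_k\va_k\va_k^*$. Your steps $\mathrm{tr}(\pmb{Z}_+)\le\lambda_{\min}(\pmb{W})^{-1}\langle\pmb{W},\pmb{Z}_+\rangle$ and $\langle\pmb{W},\pmb{Z}_-\rangle\le\lambda_{\max}(\pmb{W})\,\mathrm{tr}(\pmb{Z}_-)$ give
\[
\mathrm{tr}(\pmb{Z})\le \lambda_{\min}(\pmb{W})^{-1}\langle\pmb{W},\pmb{Z}\rangle+\bigl(\kappa(\pmb{W})-1\bigr)\,\mathrm{tr}(\pmb{Z}_-).
\]
The last term cannot be dropped. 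Take unit eigenvectors $\pmb{u},\pmb{v}$ of $\pmb{W}$ for $\lambda_{\min}$ and $\lambda_{\max}$, and set $\M_0=\pmb{v}\pmb{v}^*$ and $\M=\kappa(\pmb{W})\,\pmb{u}\pmb{u}^*$. Then $\pmb{Z}=\M-\M_0$ has negative part $\M_0$, $\langle\pmb{W},\pmb{Z}\rangle=0$, and $\mathrm{tr}(\pmb{Z})=\kappa(\pmb{W})-1>0$. No upper bound on $\lV\pmb{W}\rV_{op}$ removes this, and the trace enters your NSP consequence with coefficient $C(\rho)/\sqrt r$, where $C(\rho)=\frac{(1+\rho)^2}{1-\rho}\ge 1$.

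\textbf{Your fallback works only with two-sided control of $\pmb{W}$.} The inequality $\mathrm{tr}(\pmb{Z}_-)\le\lV\M_0^{r,c}\rV_*+\sqrt r\lV\pmb{Z}\rV_F$ is correct, by Weyl monotonicity since $-\pmb{Z}\preceq\M_0$. Inserting it produces $C(\rho)\bigl(\kappa(\pmb{W})-1\bigr)\lV\pmb{Z}\rV_F$ on the right-hand side. You can absorb this only if $\kappa(\pmb{W})<1+1/C(\rho)$, in particular only if $\kappa(\pmb{W})<2$. So $\pmb{W}\succeq\frac12\pmb{I}$ together with $\lV\pmb{W}\rV_{op}\lesssim 1$ is not enough. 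You need $\lV\pmb{W}-\pmb{I}\rV_{op}\le\epsilon$ with $\epsilon$ small, which is exactly the two-sided estimate the paper obtains from Lemma~\ref{lm:chaos} and Fact~\ref{lm:max}. It holds once $m\gtrsim_\delta\alpha_{4+\delta}^{4/(4+\delta)}n$ with a large constant, which is absorbed into $m\ge C_1(\delta)frn$. The three extra failure terms $e^{-2n}$, $\tilde c(\delta)/m$ and $\frac{1}{10m^{\delta/4}}$ all come from this step. With it, your argument closes:
\[
\lV\pmb{Z}\rV_F\lesssim\frac{1}{\sqrt r}\lV\M_0^{r,c}\rV_*+\Bigl(\frac{1}{\sqrt r}+\frac1h\Bigr)\frac{\lV\pmb{\omega}\rV_{\ell_q}}{m^{1/q}},
\]
and $h\le 1$, so the noise term has the stated form.

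\textbf{Comparison with the paper.} The paper does not bound $\mathrm{tr}(\pmb{Z})$ directly. It invokes Proposition~\ref{lm:NSP+PSD} (Kabanava et al., Theorem~8.1), which conjugates by $\pmb{W}^{1/2}$. For positive semidefinite matrices the nuclear norm of $\pmb{W}^{1/2}\M\pmb{W}^{1/2}$ equals $\frac1m\sum_k(\mA\M)_k$ exactly, so no $(\kappa-1)\,\mathrm{tr}(\pmb{Z}_-)$ remainder appears. The cost is that the NSP constant becomes $\rho\,\kappa(\pmb{W})$. Hence any bounded condition number $\kappa(\pmb{W})\le C_0$ suffices after shrinking $\rho$ to $1/(2C_0)$. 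Your repaired route keeps $\rho$ fixed but needs $\kappa(\pmb{W})$ close to $1$; both are available under the sample size assumed here.
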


We make several remarks on the above two theorems.

\begin{remark}[Nearly Minimal Assumptions on $\alpha_{4+\delta}$]
In the above theorems, we do not require the entries of $\va$ to be i.i.d.\ Gaussian or sub-Gaussian, as is commonly assumed in the literature.
Instead, independence and finite $(4+\delta)$-th moments suffice.
This moment condition is nearly minimal within our analytical framework, since the small ball analysis~\cite{chen2015exact,krahmer2020complex,huang2025low} naturally involves fourth moments of the entries through second-moment estimates of $\va^*\M\va$.
The additional $\delta>0$ provides the extra integrability needed for uniform high probability control in the heavy-tailed setting.
Thus, under these near-minimal assumptions, our recovery guarantees still achieve the order-optimal sample complexity $m=\mathcal{O}\lk rn\rk$, up to constants depending on the moment parameters $\alpha_{4+\delta},\zeta$ and $\delta$.
However, we do not know whether the sample complexity and the recovery bounds are optimal with respect to these constants.
\end{remark}

\begin{remark}[Necessity of $\beta$ and $\gamma$]
The conditions on $\beta$ and $\gamma$ are needed to avoid certain ambiguities; see, e.g.,~\cite{krahmer2020complex}.
To illustrate this point, suppose in addition that the entries are i.i.d.\ copies of a random variable $a\in\C$. 
Then conditions~\eqref{eq:condition} on $\beta$ and $\gamma$ reduce to
\begin{equation}\label{eq:condition0}
\beta=\E \lz\lv a\rv^4\rz > 1, \qquad
\gamma=\lv\E \lz a^2\rz\rv < 1.
\end{equation}
If $\E\lz\lv a\rv^4\rz=\E\lz\lv a\rv^2\rz=1$, which includes the Bernoulli case $a\in\{\pm1\}$ with probability $1/2$, then $\lv a\rv=1$ almost surely.
Consequently, the rank-one matrices
$\left\{\pmb{e}_j\pmb{e}_j^*\right\}_{j=1}^n$ are indistinguishable, where  $\left\{\pmb{e}_j\right\}_{j=1}^n$ denote the standard basis vectors of $\C^n$. 
Similarly, if $\lv\E \lz a^2\rz\rv=\E\lz\lv a\rv^2\rz=1$, for instance when $a=\lambda \widetilde{a}$ for some fixed $\lambda\in\C$ with $\lv\lambda\rv=1$ and some real-valued random variable $\widetilde{a}$, then the rank-one matrix $\x_0\x_0^*$ cannot be distinguished from  $\overline{\x}_0\overline{\x}_0^*$, although in general $\x_0\x_0^*\neq\overline{\x}_0\overline{\x}_0^*$. 
\end{remark}

\begin{remark}[Uniform, Stable, and Robust Guarantees]
 Within the rank-NSP-based framework of~\cite{kabanava2016stable}, our theorems show that uniform (simultaneously for all admissible target matrices), stable (with respect to approximate low-rankness), and robust (with respect to measurement noise) recovery of low-rank Hermitian matrices remains possible in the heavy-tailed setting.
 Theorems~\ref{thm:heavy1} and~\ref{thm:heavy2} apply beyond the exactly low-rank setting, with the approximation error measured by $\lV\M_0^{r,c}\rV_*/\sqrt r$.
When $\M_0$ has rank at most $r$, this term vanishes, and hence any solution $\M^\sharp$ to~\eqref{nuclear} (or~\eqref{PSD}) satisfies
\begin{equation*}
\lV \M_0 - \M^\sharp\rV_F
\lesssim_{\alpha_{4+\delta},\zeta,\delta}
\frac{\eta}{m^{1/q}}
\quad
\lk\text{or}
\quad
\frac{\lV\pmb{\omega}\rV_{\ell_q}}{m^{1/q}}\rk.
\end{equation*}
Moreover, if $\M_0$ has rank at most $r$ and $\pmb{\omega}=\pmb{0}$ in the sampling model~\eqref{eq:sample}, then exact reconstruction holds.
\end{remark}

\begin{remark}[Phase Retrieval via PhaseLift]\label{re:PhaseLift}
If $\M_0$ has rank one, say $\M_0=\x_0\x_0^*$, then the noiseless part of the measurements takes the form
\begin{equation*}
y_k=\va_k^*\M_0\va_k = \lv\lg \va_k,\x_0\rg\rv^2,\quad k=1,\ldots,m.
\end{equation*}
Thus, the quadratic sampling model reduces to the phase retrieval problem~\cite{balan2009painless}.
In this setting, programs~\eqref{nuclear} and~\eqref{PSD} correspond to the well-known PhaseLift method~\cite{candes2013phaselift,candes2014solving}.
Consequently, when $m =\mathcal{O}\lk n\rk$, our results yield uniform and robust recovery guarantees for phase retrieval in the heavy-tailed setting.
\end{remark}

\begin{remark}[Probability Bounds]
The recovery guarantee in Theorem~\ref{thm:heavy1} holds with an exponentially high probability of the form
$1-e^{-\Omega\lk m\rk}$.
In contrast, the probability bound in Theorem~\ref{thm:heavy2} contains additional error terms, namely the exponential term $e^{-2n}$ and two
polynomially decaying terms, $\frac{1}{m^{\delta/4}}$ and $\frac{1}{m}$.
This loss is caused by the extra condition required in the proof of Theorem~\ref{thm:heavy2}, namely that $\sum_{k=1}^m \va_k\va_k^*$ be positive definite; see Section~\ref{subsec:NSP}.
\end{remark}

\begin{remark}[A Special Case: Eight-Moment Condition]
In the setting of Theorems~\ref{thm:heavy1} and~\ref{thm:heavy2}, suppose in addition that the entries of $\va$ have finite eighth moments, and set $\alpha:=\max_{1\le i\le n}\E\,\lv a_i\rv^8<\infty$.
Then, by taking $\delta=4$, the conclusions of Theorems~\ref{thm:heavy1} and~\ref{thm:heavy2} simplify as follows.
Assume that $m\gtrsim\frac{\alpha^{5/2}}{\zeta^5}rn$.
Then with probability at least $1-e^{-\Omega\lk \zeta^4m/\alpha^2\rk}$ it holds that for all $\M_0 \in \mathcal{H}_n$, any solution $\M^\sharp$ to the program~\eqref{nuclear} obeys
\begin{equation*}
\lV \M_0 - \M^\sharp\rV_F \lesssim 
\frac{\lV\M_0^{r,c}\rV_{*}}{\sqrt{r}}  
+ \frac{\alpha\eta}{\zeta^{5/2}m^{1/q}}.
\end{equation*}
Moreover, with probability at least $1-e^{-\Omega\lk \zeta^4m/\alpha^2\rk}-2e^{-n}-\mathcal{O}\!\lk1/m\rk$ it holds that for all $\M_0 \succeq \pmb{0}$, any solution $\M^\sharp$ to the program~\eqref{PSD} obeys
\begin{equation*}
\lV \M_0 - \M^\sharp\rV_F \lesssim
\frac{\lV\M_0^{r,c}\rV_{*} }{\sqrt{r}}
+ \frac{\alpha\lV\pmb{\omega}\rV_{\ell_q}}{\zeta^{5/2}m^{1/q}}.
\end{equation*}
\end{remark}

\section{Proof of Main Results}

\subsection{Preliminaries}\label{subsec:pre}

\subsubsection{Rank Null Space Property}\label{subsec:NSP}
We first recall some standard results on low-rank matrix recovery from~\cite{kabanava2016stable},
beginning with the Frobenius-robust rank null space property (rank NSP).

    \begin{definition}\label{def:NSP}
	For $q\geq 1$, we say an operator $\mathcal{A}\colon \mathcal{H}_n\to \R^m$ satisfies the Frobenius-robust rank NSP with respect to $\ell_q$ of order $r$ with constants $0<\rho<1$ and $\tau>0$ if for all $\M\in\mathcal{H}_n$, 
	\begin{equation*}
		\lV \M^r\rV_F \le \frac{\rho}{\sqrt{r}}\lV \M^{r,c}\rV_{*} +\tau\lV \mathcal{A}\lk \M\rk\rV_{\ell_q}.
	\end{equation*}
	\end{definition}

The recovery guarantee for the model~\eqref{nuclear} follows from the following proposition, provided that the quadratic sampling operator $\mathcal{A}$ satisfies the Frobenius-robust rank NSP.

\begin{proposition}[{\cite[Theorem 3.1]{kabanava2016stable}}]\label{lm:NSP0}
Let $\mathcal{A} \colon \mathcal{H}_n\to \R^m$ satisfy the Frobenius-robust rank NSP with respect to $\ell_q$ of order $r$ with constants $0 < \rho < 1$ and $\tau > 0$. 
Then for all $\M_0 \in \mathcal{H}_n$, any solution $\M^\sharp$ to~\eqref{nuclear} with
$\y = \mathcal{A}\lk \M_0\rk + \pmb{\omega}$ and $ \lV\pmb{\omega}\rV_{\ell_q} \le \eta$,
approximates $\M_0$ with error
\begin{equation*}
\lV \M_0 - \M^\sharp\rV_F \le 
\frac{C_1}{\sqrt{r}} \lV\M_0^{r,c}\rV_{*} 
+ D_1\tau\,\eta,
\end{equation*}
where $C_1=\frac{2\lk 1+\rho\rk^2}{\lk1-\rho\rk}$ and $D_1=\frac{2\lk 3+\rho\rk}{1-\rho}$.
\end{proposition}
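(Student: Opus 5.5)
The statement to prove is Proposition~\ref{lm:NSP0}: the Frobenius-robust rank NSP yields the stated error bound for~\eqref{nuclear}. The plan is the standard rank-NSP argument of~\cite{kabanava2016stable}, in three steps.

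\textbf{Step 1: an NSP-type inequality for arbitrary pairs.} I first derive from Definition~\ref{def:NSP} a ``robust'' inequality valid for all pairs $\M,\pmb{Z}\in\mathcal{H}_n$:
\begin{equation*}
\lV \M-\pmb{Z}\rV_F \le \frac{C}{\sqrt r}\lk \lV \pmb{Z}\rV_* - \lV\M\rV_* + 2\lV \M^{r,c}\rV_*\rk + D\tau\lV\mA\lk\M-\pmb{Z}\rk\rV_{\ell_q}.
\end{equation*}
Set $\pmb{W}=\pmb{Z}-\M$. The key ingredient is the matrix analogue of the cone estimate. By Mirsky's singular value inequality, or equivalently the pinching argument
\begin{equation*}
\lV \M+\pmb{W}\rV_*\ge \sum_j \lv\sigma_j(\M)-\sigma_j(\pmb{W})\rv,
\end{equation*}
one obtains
\begin{equation*}
\lV \pmb{W}^{r,c}\rV_*\le \lV\pmb{Z}\rV_*-\lV\M\rV_*+\lV\pmb{W}^r\rV_*+2\lV\M^{r,c}\rV_*.
\end{equation*}
Combining this with the NSP gives
\begin{equation*}
\lV \pmb{W}^r\rV_F\le \frac{\rho}{\sqrt r}\lV \pmb{W}^{r,c}\rV_*+\tau\lV\mA\lk\pmb{W}\rk\rV_{\ell_q}.
\end{equation*}
Since $\lV \pmb{W}^r\rV_*\le\sqrt r\lV\pmb{W}^r\rV_F$, this bounds $\lV\pmb{W}^{r,c}\rV_*$ by $\frac{1}{1-\rho}\lk\lV\pmb{Z}\rV_*-\lV\M\rV_*+2\lV\M^{r,c}\rV_*+\sqrt r\tau\lV\mA\lk\pmb{W}\rk\rV_{\ell_q}\rk$.

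\textbf{Step 2: from nuclear-norm control to Frobenius control.} Next I bound the full Frobenius norm via $\lV\pmb{W}\rV_F\le\lV\pmb{W}^r\rV_F+\lV\pmb{W}^{r,c}\rV_F$. The tail term is handled by the standard Stechkin-type estimate on singular values, $\lV\pmb{W}^{r,c}\rV_F\le \lV\pmb{W}\rV_*/\sqrt r$, and then
\begin{equation*}
\lV\pmb{W}\rV_*\le \lV\pmb{W}^r\rV_*+\lV\pmb{W}^{r,c}\rV_*\le (1+\rho)\lV\pmb{W}^{r,c}\rV_*+\sqrt r\tau\lV\mA\lk\pmb{W}\rk\rV_{\ell_q}.
\end{equation*}
Substituting the bound on $\lV\pmb{W}^{r,c}\rV_*$ from Step~1 collects constants of the form $(1+\rho)^2/(1-\rho)$ and $(3+\rho)/(1-\rho)$.

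\textbf{Step 3: apply to the program~\eqref{nuclear}.} Take $\M=\M_0$ and $\pmb{Z}=\M^\sharp$. Since $\M_0$ is feasible, optimality gives $\lV\M^\sharp\rV_*\le\lV\M_0\rV_*$, so the difference of nuclear norms in Step~1 is nonpositive. The triangle inequality gives
\begin{equation*}
\lV\mA\lk\M^\sharp-\M_0\rk\rV_{\ell_q}\le\lV\mA\lk\M^\sharp\rk-\y\rV_{\ell_q}+\lV\pmb{\omega}\rV_{\ell_q}\le2\eta.
\end{equation*}
These two facts yield the claimed bound with $C_1=2(1+\rho)^2/(1-\rho)$ and $D_1=2(3+\rho)/(1-\rho)$.

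The main obstacle is the matrix cone inequality in Step~1. Unlike the vector case, supports do not decouple, so it needs the singular-value perturbation inequality $\lV\M-\pmb{Z}\rV_*\ge\sum_j\lv\sigma_j(\M)-\sigma_j(\pmb{Z})\rv$, applied after splitting $\M$ into $\M^r$ and $\M^{r,c}$. The remaining steps only involve tracking constants.
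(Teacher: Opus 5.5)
Your proposal is correct and is essentially the standard argument behind \cite[Theorem~3.1]{kabanava2016stable}, which the paper imports without proof. That argument combines the Mirsky-based cone inequality $\|\pmb{W}^{r,c}\|_*\le\|\pmb{Z}\|_*-\|\M\|_*+\|\pmb{W}^r\|_*+2\|\M^{r,c}\|_*$, nuclear-norm control via $\|\pmb{W}^r\|_*\le\sqrt{r}\|\pmb{W}^r\|_F$, the tail estimate $\|\pmb{W}^{r,c}\|_F\le\|\pmb{W}\|_*/\sqrt{r}$, and finally optimality of $\M^\sharp$ together with $\|\mathcal{A}(\M^\sharp-\M_0)\|_{\ell_q}\le 2\eta$.

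The one step you leave implicit is the head term in Step~2. Bounding $\|\pmb{W}^r\|_F\le\frac{\rho}{\sqrt{r}}\|\pmb{W}\|_*+\tau\|\mathcal{A}(\pmb{W})\|_{\ell_q}$ gives $\|\pmb{W}\|_F\le\frac{1+\rho}{\sqrt{r}}\|\pmb{W}\|_*+\tau\|\mathcal{A}(\pmb{W})\|_{\ell_q}$, and hence exactly the constants $(1+\rho)^2/(1-\rho)$ and $(3+\rho)/(1-\rho)$. Keeping $\|\pmb{W}^{r,c}\|_*$ instead yields the slightly smaller $(1+2\rho)/(1-\rho)$ and $3/(1-\rho)$, which also suffice.
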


Although the feasible set in the model~\eqref{PSD} is the positive semidefinite cone and does not explicitly impose a low-rank constraint, the model can still recover low-rank matrices under suitable structural conditions on the sampling matrices. 
Specifically, to prove Theorem~\ref{thm:heavy2}, in addition to requiring that $\mathcal{A}$ satisfies the Frobenius-robust rank NSP, we also need the empirical matrix $\pmb{W}:=\frac{1}{m}\sum_{k=1}^m\va_k\va_k^{*}$ to be positive definite. 

\begin{proposition}[{\cite[Theorem 8.1]{kabanava2016stable}}]\label{lm:NSP+PSD}
Suppose $\pmb{W}=\frac{1}{m}\sum_{k=1}^m\va_k\va_k^{*}$ is positive definite and $\mathcal{A} \colon \mathcal{H}_n\to \R^m$ satisfies the Frobenius-robust rank NSP with respect to $\ell_q$ of order $r$ with constants $0 < \rho < 1/\kappa\lk\pmb{W}\rk$ and $\tau > 0$, where $\kappa\lk\pmb{W}\rk:=\lV\pmb{W}\rV_{op}\cdot\lV\pmb{W}^{-1}\rV_{op}$. 
Then for all $\M_0 \succeq \pmb{0}$, any solution $\M^\sharp$ to~\eqref{PSD} with
$\y = \mathcal{A}\lk \M_0\rk + \pmb{\omega}$,
approximates $\M_0$ with error
\begin{equation*}
\lV \M_0 - \M^\sharp\rV_F \le 
\frac{C_2\kappa\lk\pmb{W}\rk}{\sqrt{r}} \lV\M_0^{r,c}\rV_{*} 
+ \lk \frac{C_2}{m^{1/q}\sqrt{r}}+D_2 \lV\pmb{W}\rV_{op}\tau\rk\,\lV\pmb{W}^{-1}\rV_{op}\,\lV\pmb{\omega}\rV_{\ell_q},
\end{equation*}
where $C_2=\frac{2\lk1+\rho\kappa\lk\pmb{W}\rk\rk^2}{1-\rho\kappa\lk\pmb{W}\rk}$ and $D_2=\frac{3+\rho\kappa\lk\pmb{W}\rk}{1-\rho\kappa\lk\pmb{W}\rk}$.
\end{proposition}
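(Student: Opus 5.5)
The plan is to reduce the semidefinite program~\eqref{PSD} to a nuclear-norm-type argument. I would do this by \emph{whitening} the sampling vectors with $\pmb{W}^{-1/2}$. Define $\widetilde{\mA}(\pmb{Z}):=\mA(\pmb{W}^{-1/2}\pmb{Z}\pmb{W}^{-1/2})$. This is the quadratic sampling operator with vectors $\widetilde{\va}_k=\pmb{W}^{-1/2}\va_k$, whose empirical matrix is the identity. The key observation is that for $\pmb{Z}\succeq\pmb{0}$ every measurement $\widetilde{\va}_k^*\pmb{Z}\widetilde{\va}_k$ is nonnegative. Hence
\begin{equation*}
\lV\pmb{Z}\rV_*=\operatorname{tr}\pmb{Z}=\frac1m\sum_{k=1}^m\widetilde{\va}_k^*\pmb{Z}\widetilde{\va}_k=\frac1m\lV\widetilde{\mA}(\pmb{Z})\rV_{\ell_1}.
\end{equation*}
So on the PSD cone the nuclear norm is determined by the data. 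Set $\pmb{X}_0=\pmb{W}^{1/2}\M_0\pmb{W}^{1/2}$, $\pmb{X}^\sharp=\pmb{W}^{1/2}\M^\sharp\pmb{W}^{1/2}$ and $\pmb{H}=\pmb{X}^\sharp-\pmb{X}_0$. Both $\pmb{X}_0$ and $\pmb{X}^\sharp$ are PSD, and $\widetilde{\mA}(\pmb{X}^\sharp)=\mA(\M^\sharp)$.

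Next I would extract the two facts used in the nuclear-norm proof. Since $\M_0$ is feasible, optimality of $\M^\sharp$ gives $\lV\mA(\M^\sharp)-\y\rV_{\ell_q}\le\lV\pmb{\omega}\rV_{\ell_q}$. The triangle inequality then gives $\lV\widetilde{\mA}(\pmb{H})\rV_{\ell_q}\le 2\lV\pmb{\omega}\rV_{\ell_q}$. By the identity above and H\"older's inequality $\lV\pmb{v}\rV_{\ell_1}\le m^{1-1/q}\lV\pmb{v}\rV_{\ell_q}$,
\begin{equation*}
\lV\pmb{X}^\sharp\rV_*-\lV\pmb{X}_0\rV_*\le\frac1m\lV\widetilde{\mA}(\pmb{H})\rV_{\ell_1}\le \frac{2\lV\pmb{\omega}\rV_{\ell_q}}{m^{1/q}}.
\end{equation*}
So $\pmb{X}^\sharp$ is an approximate nuclear-norm minimizer with slack $\xi=2m^{-1/q}\lV\pmb{\omega}\rV_{\ell_q}$. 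The deterministic core of Proposition~\ref{lm:NSP0} (as in~\cite{kabanava2016stable}) states the following. If an operator $\mathcal{B}$ has the Frobenius-robust rank NSP with constants $\rho',\tau'$, and $\lV\pmb{X}^\sharp\rV_*\le\lV\pmb{X}_0\rV_*+\xi$, then
\begin{equation*}
\lV\pmb{H}\rV_F\lesssim_{\rho'}\frac{1}{\sqrt r}\lk\lV\pmb{X}_0^{r,c}\rV_*+\xi\rk+\tau'\lV\mathcal{B}(\pmb{H})\rV_{\ell_q}.
\end{equation*}
The proof is the usual cone-constraint argument combined with the NSP. Applying it with $\mathcal{B}=\widetilde{\mA}$ gives the middle estimate.

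Finally I would pull everything back to $\M$. First, $\lV\M^\sharp-\M_0\rV_F\le\lV\pmb{W}^{-1}\rV_{op}\lV\pmb{H}\rV_F$. Second, $\lV\pmb{X}_0^{r,c}\rV_*\le\lV\pmb{W}^{1/2}\M_0^{r,c}\pmb{W}^{1/2}\rV_*\le\lV\pmb{W}\rV_{op}\lV\M_0^{r,c}\rV_*$, because $\pmb{W}^{1/2}\M_0^r\pmb{W}^{1/2}$ has rank at most $r$. Together these produce the factors $\kappa(\pmb{W})$, $\lV\pmb{W}^{-1}\rV_{op}$ and $m^{-1/q}$ of the stated bound, with the constants of the stated form but $\rho$ replaced by $\rho\kappa(\pmb{W})$.

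The main obstacle is the transfer step. I must show that $\widetilde{\mA}$ inherits the Frobenius-robust rank NSP with constants $\rho'=\rho\kappa(\pmb{W})<1$ and $\tau'=\tau\lV\pmb{W}\rV_{op}$. I would first try applying the NSP of $\mA$ to $\M=\pmb{W}^{-1/2}\pmb{Z}\pmb{W}^{-1/2}$, which satisfies $\mA(\M)=\widetilde{\mA}(\pmb{Z})$. Congruence by $\pmb{W}^{\pm1/2}$ distorts Frobenius and nuclear norms by at most $\lV\pmb{W}^{\pm1}\rV_{op}$, but it does not commute with best rank-$r$ truncation. So I would avoid comparing $\M^r$ with $\pmb{W}^{-1/2}\pmb{Z}^r\pmb{W}^{-1/2}$ directly. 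Instead I would use the equivalent formulation of the NSP in which the rank-$r$ part can be replaced by any rank-$\le r$ matrix $\pmb{Y}$, which is invariant under congruence. This yields a bound of the form $\lV\pmb{Z}-\pmb{Y}\rV_F\le\frac{\rho\kappa}{\sqrt r}\lV\pmb{Z}^{r,c}\rV_*+\tau\lV\pmb{W}\rV_{op}\lV\widetilde{\mA}(\pmb{Z})\rV_{\ell_q}$. Tracking the $\lV\pmb{W}\rV_{op}$ and $\lV\pmb{W}^{-1}\rV_{op}$ factors in this step is where the hypothesis $\rho<1/\kappa(\pmb{W})$ enters.
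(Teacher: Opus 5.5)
Your reduction is sound, and it is the whitening argument behind \cite[Theorem~8.1]{kabanava2016stable}, which the paper imports without proof. On the PSD cone, $\lV\pmb{W}^{1/2}\M\pmb{W}^{1/2}\rV_*=\frac1m\sum_k(\mA(\M))_k$ is determined by the measurements. Hence $\pmb{X}^\sharp$ is a nuclear-norm minimizer up to slack $2m^{-1/q}\lV\pmb{\omega}\rV_{\ell_q}$, and the general NSP stability estimate plus the pull-back finish the proof.

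The gap is in the step you yourself flag: transferring the rank NSP to $\widetilde{\mA}$. The route you propose there fails, because no equivalent form of the NSP lets you replace $\M^r$ by an arbitrary $\pmb{Y}$ of rank at most $r$. The natural reading, $\lV\pmb{Y}\rV_F\le\frac{\rho}{\sqrt r}\lV\M-\pmb{Y}\rV_*+\tau\lV\mA(\M)\rV_{\ell_q}$, already fails at $\M=\pmb{0}$, since $\lV\pmb{Y}\rV_*\le\sqrt r\lV\pmb{Y}\rV_F$ and $\rho<1$. Your displayed inequality $\lV\pmb{Z}-\pmb{Y}\rV_F\le\frac{\rho\kappa}{\sqrt r}\lV\pmb{Z}^{r,c}\rV_*+\tau\lV\pmb{W}\rV_{op}\lV\widetilde{\mA}(\pmb{Z})\rV_{\ell_q}$ is false for $\pmb{Z}=\pmb{0}\neq\pmb{Y}$.

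Replacing the truncation by a competitor is legitimate only for the minimum-type quantity $\lV\M^{r,c}\rV_*=\min_{\operatorname{rank}\pmb{Y}\le r}\lV\M-\pmb{Y}\rV_*$, as you correctly do for $\pmb{X}_0^{r,c}$. It is not legitimate for the maximum-type quantity $\lV\M^r\rV_F$ on the left of the NSP. So, as written, the inequality where $\rho\kappa(\pmb{W})<1$ is supposed to enter is not established.

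The missing idea is to compare singular values, not truncations. The two norms in the NSP depend on $\M$ only through its singular values: $\lV\M^r\rV_F=(\sum_{j\le r}\sigma_j(\M)^2)^{1/2}$ and $\lV\M^{r,c}\rV_*=\sum_{j>r}\sigma_j(\M)$. Moreover, $\sigma_j(\pmb{A}\pmb{B}\pmb{C})\le\lV\pmb{A}\rV_{op}\lV\pmb{C}\rV_{op}\,\sigma_j(\pmb{B})$ for every $j$; use $\sigma_j(\pmb{X})=\min_{\operatorname{rank}\pmb{R}<j}\lV\pmb{X}-\pmb{R}\rV_{op}$ with competitor $\pmb{A}\pmb{R}\pmb{C}$. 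For $\pmb{Z}\in\mathcal{H}_n$ and $\M=\pmb{W}^{-1/2}\pmb{Z}\pmb{W}^{-1/2}$ this gives $\lV\pmb{Z}^r\rV_F\le\lV\pmb{W}\rV_{op}\lV\M^r\rV_F$ and $\lV\M^{r,c}\rV_*\le\lV\pmb{W}^{-1}\rV_{op}\lV\pmb{Z}^{r,c}\rV_*$. Applying the NSP of $\mA$ to $\M$, together with $\mA(\M)=\widetilde{\mA}(\pmb{Z})$, yields
\begin{equation*}
\lV\pmb{Z}^r\rV_F\le\frac{\rho\kappa(\pmb{W})}{\sqrt r}\lV\pmb{Z}^{r,c}\rV_*+\tau\lV\pmb{W}\rV_{op}\lV\widetilde{\mA}(\pmb{Z})\rV_{\ell_q}.
\end{equation*}
The hypothesis $\rho<1/\kappa(\pmb{W})$ is used only to make $\rho\kappa(\pmb{W})<1$. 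With this lemma the rest of your argument goes through.

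One bookkeeping remark: because $\lV\widetilde{\mA}(\pmb{H})\rV_{\ell_q}\le 2\lV\pmb{\omega}\rV_{\ell_q}$, your route produces $2D_2$ rather than $D_2$ in the noise term (compare the factor $2$ in $D_1$ of Proposition~\ref{lm:NSP0}). This is only a constant discrepancy with the displayed statement.
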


    The Frobenius-robust rank NSP for the quadratic sampling operator $\mathcal{A}$ plays a crucial role in proving both Theorem~\ref{thm:heavy1} and Theorem~\ref{thm:heavy2}.
	To this end, we introduce the set $\mathcal{T}_{\rho,r}$:
	\begin{equation*}
	\mathcal{T}_{\rho,r}=\left\{ \M\in \mathcal{H}_n\, :\,\lV \M\rV_F=1,\, \lV \M^r\rV_F>\frac{\rho}{\sqrt{r}}\lV \M^{r,c}\rV_{*} \right\}.
	\end{equation*}

The next lemma shows that establishing a uniform lower bound on $\lV\mathcal{A}\lk \M\rk\rV_{\ell_q}$ for all $\M\in \mathcal{T}_{\rho,r}$ is sufficient to guarantee that $\mathcal{A}$ satisfies the Frobenius-robust rank NSP.

\begin{lemma}[{\cite[Lemma~3.3]{kabanava2016stable}}]\label{lm:NSP}
If
\begin{equation}\label{eq:lower}
\inf \left\{ \lV\mathcal{A}\lk \M\rk\rV_{\ell_q} : \M \in \mathcal{T}_{\rho,r} \right\} > \frac{1}{\tau},
\end{equation}
then $\mathcal{A}$ satisfies the Frobenius-robust rank NSP with respect to $\ell_q$ of order $r$ with constants $\rho$ and $\tau$.
\end{lemma}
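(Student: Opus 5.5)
The claim is a scaling argument. Given any $\M\in\mathcal{H}_n$, we need to show
\begin{equation*}
\lV \M^r\rV_F \le \frac{\rho}{\sqrt{r}}\lV \M^{r,c}\rV_{*} +\tau\lV \mA\lk \M\rk\rV_{\ell_q}.
\end{equation*}
We will split into two cases according to whether the normalized matrix $\M/\lV\M\rV_F$ lies in $\mathcal{T}_{\rho,r}$. The trivial case $\M=\pmb{0}$ is immediate, since both sides vanish.

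\emph{Case 1.} Suppose $\lV \M^r\rV_F\le \frac{\rho}{\sqrt r}\lV\M^{r,c}\rV_*$. Since $\tau\lV\mA(\M)\rV_{\ell_q}\ge0$, the desired inequality holds directly. No use of \eqref{eq:lower} is needed here.

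\emph{Case 2.} Suppose $\lV \M^r\rV_F> \frac{\rho}{\sqrt r}\lV\M^{r,c}\rV_*$, so in particular $\M\neq\pmb 0$. Set $\widetilde{\M}=\M/\lV\M\rV_F$. The best rank-$r$ approximation commutes with positive scaling, so $\widetilde{\M}^r=\M^r/\lV\M\rV_F$ and $\widetilde{\M}^{r,c}=\M^{r,c}/\lV\M\rV_F$. Hence $\widetilde{\M}$ satisfies the same strict inequality, has unit Frobenius norm, and therefore lies in $\mathcal{T}_{\rho,r}$.

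By \eqref{eq:lower}, $\lV\mA(\widetilde{\M})\rV_{\ell_q}>1/\tau$. Linearity of $\mA$ then gives $\tau\lV\mA(\M)\rV_{\ell_q}>\lV\M\rV_F$. Since the singular values of $\M^r$ are a subset of those of $\M$, we have $\lV\M^r\rV_F\le\lV\M\rV_F$. Combining,
\begin{equation*}
\lV\M^r\rV_F\le \lV\M\rV_F<\tau\lV\mA(\M)\rV_{\ell_q}\le \frac{\rho}{\sqrt r}\lV\M^{r,c}\rV_*+\tau\lV\mA(\M)\rV_{\ell_q}.
\end{equation*}

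There is no real obstacle in this argument. The only points that need care are the homogeneity of $\M\mapsto\M^r$ (valid for positive scalars, including when the rank-$r$ truncation is non-unique, by choosing the scaled truncation) and the bound $\lV\M^r\rV_F\le\lV\M\rV_F$.
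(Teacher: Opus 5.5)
Your proof is correct. It is essentially the argument behind \cite[Lemma~3.3]{kabanava2016stable}, which the paper cites without reproducing. That argument splits instead on whether $\lV\mathcal{A}(\M)\rV_{\ell_q}\le\lV\M\rV_F/\tau$ (which forces $\M/\lV\M\rV_F\notin\mathcal{T}_{\rho,r}$, and hence gives the cone inequality) or not (which gives $\lV\M^r\rV_F\le\lV\M\rV_F<\tau\lV\mathcal{A}(\M)\rV_{\ell_q}$); this is just the contrapositive arrangement of your two cases, and the rescaling step and the bound $\lV\M^r\rV_F\le\lV\M\rV_F$ are handled exactly as you do.
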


The following lemma characterizes the approximate low-rank structure of the set $\mathcal{T}_{\rho,r}$. 
It follows directly from Lemma~3.4 in~\cite{kabanava2016stable}, which embeds $\mathcal{T}_{\rho,r}$ into a scaled version of the convex hull of matrices of rank at most $r$ and unit Frobenius norm.

\begin{lemma}[{\cite[Lemma~3.4]{kabanava2016stable}}]\label{lm:diag}
For any $\M \in \mathcal{T}_{\rho,r}$, we have
\begin{equation}
\lV\M\rV_*\le\sqrt{1+\lk1+\rho^{-1}\rk^{2}}\sqrt{r}.
\end{equation}
\end{lemma}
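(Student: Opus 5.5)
We need to show that every $\M\in\mathcal{T}_{\rho,r}$ satisfies $\lV\M\rV_*\le\sqrt{1+\lk1+\rho^{-1}\rk^2}\sqrt r$. The plan is to split $\M$ into its best rank-$r$ part and the residual, bound each piece's nuclear norm using only $\lV\M\rV_F=1$ and the defining inequality of $\mathcal{T}_{\rho,r}$, and then combine the two bounds with Cauchy–Schwarz.

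\textbf{Step 1 (the two pieces).} Write $\M=\M^r+\M^{r,c}$. Take an eigendecomposition of the Hermitian matrix $\M$ and let $\M^r$ keep the $r$ eigenvalues of largest modulus. Then $\M^r$ and $\M^{r,c}$ have orthogonal ranges. Consequently
\begin{equation*}
\lV\M\rV_*=\lV\M^r\rV_*+\lV\M^{r,c}\rV_*,\qquad \lV\M^r\rV_F^2+\lV\M^{r,c}\rV_F^2=1 .
\end{equation*}

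\textbf{Step 2 (bounding each piece).} Since $\M^r$ has rank at most $r$, Cauchy–Schwarz on its singular values gives $\lV\M^r\rV_*\le\sqrt r\,\lV\M^r\rV_F$. For the residual, membership in $\mathcal{T}_{\rho,r}$ means $\lV\M^r\rV_F>\frac{\rho}{\sqrt r}\lV\M^{r,c}\rV_*$, so $\lV\M^{r,c}\rV_*<\rho^{-1}\sqrt r\,\lV\M^r\rV_F$. Adding these,
\begin{equation*}
\lV\M\rV_*\le \lk1+\rho^{-1}\rk\sqrt r\,\lV\M^r\rV_F .
\end{equation*}
Since $\lV\M^r\rV_F\le 1$, this already yields $\lV\M\rV_*\le(1+\rho^{-1})\sqrt r$, which is stronger than the claim. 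So the stated bound, presumably inherited from the convex-hull embedding in \cite{kabanava2016stable}, follows immediately.

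\textbf{Step 3 (matching the stated form).} To recover the stated constant in the spirit of that embedding, use the finer combination. Let $s_1\ge s_2\ge\cdots$ be the singular values of $\M$. The entries of $\M^{r,c}$ satisfy $s_{j}\le s_r\le\lV\M^r\rV_F/\sqrt r$ for $j>r$. The embedding argument splits $\M^{r,c}$ into blocks of rank $r$, each with Frobenius norm controlled by the previous block's nuclear norm divided by $\sqrt r$. This writes $\M$ as $\M^r$ plus a sum of rank-$r$ pieces whose Frobenius norms total at most $\lk1+\rho^{-1}\rk\lV\M^r\rV_F$. Combining orthogonally, $\sqrt{\lV\M^r\rV_F^2+\lk\cdot\rk^2}\le\sqrt{1+(1+\rho^{-1})^2}$, and multiplying by $\sqrt r$ gives the stated bound. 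No step is a real obstacle; the only care needed is that the best rank-$r$ approximation of a Hermitian matrix is taken by eigenvalue modulus, so that the orthogonal-range decomposition in Step 1 holds.
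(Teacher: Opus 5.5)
Your Step 2 is already a complete and correct proof, and it takes a different, more elementary route than the paper. The paper gives no argument of its own. It cites \cite[Lemma~3.4]{kabanava2016stable}, which proves the structural inclusion $\mathcal{T}_{\rho,r}\subset\sqrt{1+(1+\rho^{-1})^2}\,\mathcal{D}$, where $\mathcal{D}$ is the convex hull of Hermitian matrices of rank at most $r$ and unit Frobenius norm. The nuclear-norm bound then follows because every element of $\mathcal{D}$ has nuclear norm at most $\sqrt r$. You bypass the embedding. The triangle inequality on $\M=\M^r+\M^{r,c}$, the rank bound $\|\M^r\|_*\le\sqrt r\,\|\M^r\|_F$, and the defining inequality $\|\M^{r,c}\|_*<\rho^{-1}\sqrt r\,\|\M^r\|_F$ give $\|\M\|_*\le(1+\rho^{-1})\sqrt r$, which is a strictly better constant. (You also do not need additivity of the nuclear norm in Step 1; the triangle inequality suffices.) The paper uses this lemma only through the H\"older step \eqref{eq:Wm}, so your version is all that is needed there, and it would marginally improve that constant. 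What the embedding buys in general is control of every linear functional over $\mathcal{T}_{\rho,r}$. For example, it bounds $\sup_{\M\in\mathcal{T}_{\rho,r}}\langle\pmb{H},\M\rangle$ by $\sqrt{1+(1+\rho^{-1})^2}$ times the $\ell_2$-norm of the $r$ largest singular values of $\pmb{H}$, which can be smaller than $\sqrt r\,\|\pmb{H}\|_{op}$.

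You should drop Step 3: it is unnecessary, and its key step is invalid as written. Block Frobenius norms enter the nuclear-norm bound, and the convex-hull scaling, additively: $\|\M\|_*\le\sqrt r\sum_l\|\M_{S_l}\|_F$. So you cannot combine $\|\M^r\|_F$ with the residual total ``orthogonally'' as $\sqrt{\|\M^r\|_F^2+(\cdot)^2}$, since $\sqrt{a^2+b^2}\le a+b$ points the wrong way. Adding your residual total $(1+\rho^{-1})\|\M^r\|_F$ to $\|\M^r\|_F$ honestly only gives the constant $2+\rho^{-1}$.

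The constant of Kabanava et al.\ comes from isolating the first residual block $\M_{S_2}$:
\begin{itemize}
\item For $l\ge 3$, $\|\M_{S_l}\|_F\le r^{-1/2}\|\M_{S_{l-1}}\|_*$, so $\sum_{l\ge3}\|\M_{S_l}\|_F\le r^{-1/2}\|\M^{r,c}\|_*<\rho^{-1}\|\M^r\|_F$.
\item Hence $\sum_l\|\M_{S_l}\|_F<(1+\rho^{-1})\|\M^r\|_F+\|\M_{S_2}\|_F$.
\item Cauchy--Schwarz under the constraint $\|\M^r\|_F^2+\|\M_{S_2}\|_F^2\le 1$ bounds the right-hand side by $\sqrt{1+(1+\rho^{-1})^2}$.
\end{itemize}
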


\subsubsection{Small Ball Method}\label{subsec:SB}

    The standard approach to establishing the Frobenius-robust rank NSP for the quadratic sampling operator $\mathcal{A}$ through Lemma~\ref{lm:NSP} is Mendelson's small ball method~\cite{mendelson2015learning,koltchinskii2015bounding}, which has been extensively applied over the past decade to a variety of problems, including sparse recovery, low-rank matrix recovery, phase retrieval, and blind deconvolution~\cite{lecue2017sparse,abdalla2022dictionary,kueng2017low,kabanava2016stable,rauhut2019low,huang2025low,krahmer2020complex,krahmer2021convex}.
        
	\begin{proposition}[{\cite[Theorem 1.5]{koltchinskii2015bounding}, \cite[Proposition 5.1]{tropp2015convex}}]
 \label{lm:small_ball}
	Fix $\mT\subset\R^{n}$.
	Let $\{\pmb{\phi}_k\}_{k=1}^{m}$ be independent copies of a random vector $\pmb{\phi}$ in $\R^{n}$, and let $\{\varepsilon_{k}\}_{k=1}^{m}$ be a Rademacher sequence independent of $\{\pmb{\phi}_k\}_{k=1}^{m}$. 
    Define the small ball function and the supremum of the empirical process by
    \begin{equation*}	     
     \mathcal{Q}_{\xi}\lk \mT;\pmb{\phi}\rk=\inf_{\xu\in\mT}\mathbb{P}\lk\lv \lg\pmb{\phi},\xu\rg\rv\ge\xi\rk\quad\text{and}\quad
     \mathcal{W}_{m}\lk \mT;\pmb{\phi}\rk=\E\sup_{\xu\in\mT}\lv\frac{1}{m}\sum_{k=1}^{m}\varepsilon_{k}\lg\pmb{\phi}_k,\xu\rg\rv.
	         \end{equation*}
	Then for any $q\ge1,\xi>0$ and $t>0$, with probability at least $1-\exp\lk-2t^{2}\rk$, it holds that
	  \begin{equation}
\inf_{\xu\in\mT}\lk\sum_{k=1}^{m}\lv\lg\pmb{\phi}_k,\xu\rg\rv^{q}\rk^{1/q} \ge
	m^{\frac{1}{q}}\lk\xi\mathcal{Q}_{2\xi}\lk \mT;\pmb{\phi}\rk-2\mathcal{W}_{m}\lk \mT;\pmb{\phi}\rk-\frac{\xi t}{\sqrt{m}}\rk.
	    \end{equation}     
\end{proposition}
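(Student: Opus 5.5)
The plan is to reduce the $\ell_q$-sum to an empirical average of a bounded Lipschitz surrogate of the indicator $\mathbbm{1}\{|\lg\pmb{\phi}_k,\xu\rg|\ge \xi\}$, and then to control its uniform deviation from the mean by bounded differences, symmetrization and contraction. First, for each fixed $\xu\in\mT$, the power-mean (Hölder) inequality gives
\begin{equation*}
\lk\sum_{k=1}^m\lv\lg\pmb{\phi}_k,\xu\rg\rv^q\rk^{1/q}\ge m^{\frac1q-1}\sum_{k=1}^m\lv\lg\pmb{\phi}_k,\xu\rg\rv .
\end{equation*}
So it suffices to show that, with probability at least $1-e^{-2t^2}$,
\begin{equation*}
\inf_{\xu\in\mT}\frac1m\sum_{k=1}^m\lv\lg\pmb{\phi}_k,\xu\rg\rv\ge \xi\mathcal{Q}_{2\xi}-2\mathcal{W}_m-\frac{\xi t}{\sqrt m}.
\end{equation*}

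Next I introduce the surrogate. Let $\varphi_\xi:[0,\infty)\to[0,1]$ vanish on $[0,\xi]$, rise linearly from $0$ to $1$ on $[\xi,2\xi]$, and equal $1$ afterwards. Then $\varphi_\xi$ is $1/\xi$-Lipschitz, $\varphi_\xi(0)=0$, and
\begin{equation*}
\mathbbm{1}\{s\ge 2\xi\}\le\varphi_\xi(s)\le\mathbbm{1}\{s\ge\xi\}.
\end{equation*}
Hence $|s|\ge\xi\,\varphi_\xi(|s|)$, which gives
\begin{equation*}
\frac1m\sum_k\lv\lg\pmb{\phi}_k,\xu\rg\rv\ge\xi\,\frac1m\sum_k\varphi_\xi\lk\lv\lg\pmb{\phi}_k,\xu\rg\rv\rk,
\end{equation*}
and also $\E\varphi_\xi(|\lg\pmb{\phi},\xu\rg|)\ge\mathbb{P}(|\lg\pmb{\phi},\xu\rg|\ge2\xi)\ge\mathcal{Q}_{2\xi}$. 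Writing $Z_k(\xu)=\varphi_\xi(|\lg\pmb{\phi}_k,\xu\rg|)$, we obtain
\begin{equation*}
\inf_{\xu}\frac1m\sum_k Z_k(\xu)\ge\mathcal{Q}_{2\xi}-\sup_{\xu}\frac1m\sum_k\lk\E Z(\xu)-Z_k(\xu)\rk=:\mathcal{Q}_{2\xi}-D.
\end{equation*}

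Since each $Z_k\in[0,1]$, changing one $\pmb{\phi}_k$ alters $D$ by at most $1/m$. McDiarmid's inequality therefore gives $D\le\E D+t/\sqrt m$ with probability at least $1-e^{-2t^2}$. By Giné--Zinn symmetrization, $\E D\le 2\,\E\sup_{\xu}\frac1m\sum_k\varepsilon_k Z_k(\xu)$; no absolute value is needed here, since the sign of $\varepsilon_k$ is symmetric.

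The key step, and the only delicate one, is the contraction. The map $s\mapsto\varphi_\xi(|s|)$ is $1/\xi$-Lipschitz and vanishes at $0$. I would apply the Ledoux--Talagrand comparison in the version for suprema without absolute values (conditioning on the $\pmb{\phi}_k$), which gives
\begin{equation*}
\E\sup_{\xu}\sum_k\varepsilon_k Z_k(\xu)\le\frac1\xi\,\E\sup_{\xu}\sum_k\varepsilon_k\lg\pmb{\phi}_k,\xu\rg\le\frac{m}{\xi}\mathcal{W}_m.
\end{equation*}
Care is needed to use the form of the contraction principle that yields constant $1$ rather than $2$, in order to land exactly on the stated factor $2\mathcal{W}_m$. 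Combining, $\E D\le 2\mathcal{W}_m/\xi$, so with the stated probability
\begin{equation*}
\inf_{\xu}\frac1m\sum_k\lv\lg\pmb{\phi}_k,\xu\rg\rv\ge\xi\mathcal{Q}_{2\xi}-2\mathcal{W}_m-\frac{\xi t}{\sqrt m}.
\end{equation*}
Multiplying by $m^{1/q}$ and using the first step completes the argument.
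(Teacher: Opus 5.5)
Your argument is correct and is essentially the standard proof from the sources the paper cites; the paper itself only quotes Tropp's Proposition~5.1 and Koltchinskii--Mendelson without reproving it. The steps match: H\"older to pass from $\ell_q$ to $\ell_1$, a Markov-type lower bound, McDiarmid with increments $1/m$ (giving exactly $e^{-2t^2}$), Gin\'e--Zinn symmetrization, and the one-sided Ledoux--Talagrand contraction with constant $1$. The absolute value in $\mathcal{W}_m$ only enlarges that term, so it costs nothing. The one cosmetic difference from Tropp is that you use the soft indicator $\varphi_\xi$ already in the Markov step, whereas Tropp uses the hard indicator of $\{|s|\ge\xi\}$ there and switches to the soft one only when bounding the expected deviation; this works equally well.
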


	Hence, to apply the preceding proposition to lower bound $\lV\mathcal{A}\lk \M\rk\rV_{\ell_q}$ in Lemma~\ref{lm:NSP} over the set $\mT_{\rho,r}$, it remains to establish a lower bound for
	\begin{equation*}
		\mathcal{Q}_{2\xi}:=\inf_{\M\in \mT_{\rho,r}}\mathbb{P}\lk\vert \va^{*}\M\va\vert\ge2\xi\rk 
	\end{equation*}
    and an upper bound for
    \begin{equation*}
	\mathcal{W}_m:=\E\sup_{\M\in \mT_{\rho,r}}\lv\left\lg\pmb{H},\M\right\rg\rv,
	\end{equation*}
	where $\pmb{H}=\frac{1}{m}\sum_{k=1}^m \varepsilon_k \va_k \va_k^{*}$.

To lower bound the small ball function $\mathcal{Q}_{2\xi}$, we employ the celebrated Paley--Zygmund inequality. 
In order to impose weaker assumptions on the sampling vector $\va$, we use the following generalized version of the Paley--Zygmund inequality~\cite{petrov2007lower}.
Its proof is postponed to Appendix~\ref{ap:PZ}.
\begin{fact}\label{fact:PZ}
Let $Z\ge 0$ be a random variable and let $0<s<q<\infty$ with $0<\lV Z\rV_{L_q}<\infty$. 
Then, for any $\theta\in(0,1)$,
\begin{equation}\label{eq:PZ0}
    \mathbb{P}\lk Z\ge \theta \lV Z\rV_{L_s}\rk
    \ge
    \lk 1-\theta^s\rk^{\frac{q}{q-s}}
    \lk\frac{\lV Z\rV_{L_s}}{\lV Z\rV_{L_q}}\rk^{\frac{sq}{q-s}} .
\end{equation}
\end{fact}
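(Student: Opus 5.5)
The plan is to prove Fact~\ref{fact:PZ} by the classical Paley--Zygmund splitting argument, using H\"older's inequality in place of Cauchy--Schwarz. Set $\mu_s:=\E Z^s=\lV Z\rV_{L_s}^s$, which is positive and finite because $0<\lV Z\rV_{L_q}<\infty$ and $s<q$. If $\mu_s$ were zero, then $Z=0$ almost surely, contradicting $\lV Z\rV_{L_q}>0$. Let $A=\{Z\ge \theta\lV Z\rV_{L_s}\}$. The first step splits the $s$-th moment over $A$ and its complement:
\begin{equation*}
\mu_s=\E\lz Z^s\mathbbm{1}_{A^c}\rz+\E\lz Z^s\mathbbm{1}_{A}\rz\le \theta^s\mu_s+\E\lz Z^s\mathbbm{1}_{A}\rz .
\end{equation*}
This gives $\E\lz Z^s\mathbbm{1}_A\rz\ge(1-\theta^s)\mu_s$.

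The second step bounds the same quantity from above by H\"older's inequality with exponents $p=q/s>1$ and $p'=q/(q-s)$:
\begin{equation*}
\E\lz Z^s\mathbbm{1}_A\rz\le \lk\E Z^q\rk^{s/q}\mathbb{P}\lk A\rk^{(q-s)/q}=\lV Z\rV_{L_q}^s\,\mathbb{P}\lk A\rk^{(q-s)/q}.
\end{equation*}
Combining the two displays yields
\begin{equation*}
\mathbb{P}\lk A\rk^{(q-s)/q}\ge (1-\theta^s)\frac{\lV Z\rV_{L_s}^s}{\lV Z\rV_{L_q}^s}.
\end{equation*}
The right-hand side is nonnegative since $\theta\in(0,1)$. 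Raising both sides to the power $q/(q-s)>0$ gives exactly
\begin{equation*}
\mathbb{P}(A)\ge(1-\theta^s)^{\frac{q}{q-s}}\lk\lV Z\rV_{L_s}/\lV Z\rV_{L_q}\rk^{\frac{sq}{q-s}},
\end{equation*}
which is \eqref{eq:PZ0}.

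There is no real obstacle here. The only points needing care are the finiteness and positivity of the moments, so that the division is legitimate, and the observation that on $A^c$ we have the strict inequality $Z<\theta\lV Z\rV_{L_s}$, which gives the bound $Z^s<\theta^s\mu_s$ there. The case $s<1$ causes no difficulty, since only H\"older's inequality for the exponent pair $(q/s,\,q/(q-s))$ is used, and no norm property of $\lV\cdot\rV_{L_s}$ is needed.
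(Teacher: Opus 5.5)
Your proposal is correct and follows the paper's proof in Appendix~\ref{ap:PZ} essentially verbatim. You split $\E Z^s$ over the event and its complement, bound the complement's contribution by $\theta^s\lV Z\rV_{L_s}^s$, apply H\"older's inequality with exponents $q/s$ and $q/(q-s)$ on the event, and rearrange; your check that $0<\lV Z\rV_{L_s}<\infty$ (so the division and final power are legitimate) is a small piece of care the paper leaves implicit.
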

\noindent Applying Fact~\ref{fact:PZ} to
$Z=\lv \va^{*}\M\va\rv^{2}$ with $s=1$ and $q=p/2$, where $p>2$, yields that for any $t\in(0,1)$,
    \begin{equation}\label{eq:PZ}
	\mathbb{P}\lk\vert \va^{*}\M\va\vert^2\ge t\, \E\,\vert \va^{*}\M\va\vert^2\rk
    \ge \lk1-t\rk^{\frac{p}{p-2}}\cdot\frac{\lk\E\,\vert \va^{*}\M\va\vert^2\rk^{\frac{p}{p-2}}}{\lk\E\,\vert \va^{*}\M\va\vert^p\rk^{{\frac{2}{p-2}}}}.
	\end{equation} 
    
The following lemma from~\cite{krahmer2020complex} is particularly useful.
It not only explains the origin of the degeneracy in quadratic sampling, but can also be used to provide a lower bound for $\E\,\lv\va^*\M\va\rv^2$.
\begin{lemma}[{\cite[Lemma 9]{krahmer2020complex}}]\label{lm:E^2}
Let $\va\in\C^n$ be a random vector with independent mean-zero, variance-one entries $\{a_i\}_{i=1}^n$.
Then for any matrix $\M\in \mathcal{H}_n$ it holds that
\begin{align*}
\E\,\lv\va^*\M\va\rv^2
&= \lk\operatorname{Tr} \M\rk^2
+ \sum_{i=1}^n\lk\E\lz\lv a_i\rv^4\rz-1\rk \M_{ii}^2\\
&\quad +\sum_{i\ne j}\M_{ij}^2\,\E\lz \overline{a_i}^2\rz\cdot \E\lz a_j^2\rz+ \sum_{i\ne j}\lv\M_{ij}\rv^2.\\
\end{align*}
\end{lemma}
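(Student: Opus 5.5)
We prove the identity by expanding the quadratic form and computing the second moment term by term. Independence and the normalization $\E a_i=0$, $\E\lv a_i\rv^2=1$ kill almost all cross terms.

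Write $\va^*\M\va=\sum_{i,j}\overline{a_i}\M_{ij}a_j$. Since $\M$ is Hermitian, this quantity is real, so
\begin{equation*}
\lv\va^*\M\va\rv^2=\lk\va^*\M\va\rk\overline{\lk\va^*\M\va\rk}=\sum_{i,j,k,l}\M_{ij}\overline{\M_{kl}}\,\E\lz\overline{a_i}a_j a_k\overline{a_l}\rz
\end{equation*}
after taking expectations. The plan is to classify the index quadruples $(i,j,k,l)$ by their coincidence pattern. A moment $\E\lz\overline{a_i}a_ja_k\overline{a_l}\rz$ factorizes over distinct indices, and it vanishes whenever some index appears exactly once, because each entry has mean zero.

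The surviving cases are the following.

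(i) All four indices are equal, $i=j=k=l$. The contribution is $\sum_i\M_{ii}^2\E\lv a_i\rv^4$, using that $\M_{ii}$ is real.

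(ii) $i=j\ne k=l$. Here $\E\lz\lv a_i\rv^2\rz\E\lz\lv a_k\rv^2\rz=1$, and the contribution is $\sum_{i\ne k}\M_{ii}\M_{kk}$.

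(iii) $i=l\ne j=k$. The moment is $\E\lz\overline{a_i}^2\rz\E\lz a_j^2\rz$, and the contribution is $\sum_{i\ne j}\M_{ij}\overline{\M_{ji}}\,\E\lz\overline{a_i}^2\rz\E\lz a_j^2\rz$. By the Hermitian property, $\overline{\M_{ji}}=\M_{ij}$, which produces the $\M_{ij}^2$ term.

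(iv) $i=k\ne j=l$. The moment is $\E\lv a_i\rv^2\E\lv a_j\rv^2=1$, and the contribution is $\sum_{i\ne j}\M_{ij}\overline{\M_{ij}}=\sum_{i\ne j}\lv\M_{ij}\rv^2$.

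Finally, combine cases (i) and (ii):
\begin{equation*}
\sum_i\M_{ii}^2\E\lv a_i\rv^4+\sum_{i\ne k}\M_{ii}\M_{kk}=\lk\operatorname{Tr}\M\rk^2+\sum_i\lk\E\lv a_i\rv^4-1\rk\M_{ii}^2.
\end{equation*}
Adding the contributions from (iii) and (iv) then gives the stated formula.

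The only delicate step is the bookkeeping of conjugations in cases (iii) and (iv). One must check which pairing gives $\E\lz a^2\rz$-type moments rather than $\E\lv a\rv^2$-type moments, and use $\overline{\M_{ji}}=\M_{ij}$ correctly. One must also make sure no other pattern, for example three indices equal and one distinct, survives; such patterns always contain a singleton index, so their moments vanish.
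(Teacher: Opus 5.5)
Your proof is correct. The four surviving index patterns (all equal, plus the three distinct pairings) are exhaustive, and your conjugation bookkeeping in cases (iii) and (iv), including the use of $\overline{\M_{ji}}=\M_{ij}$, is right.

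The paper gives no proof of its own; it imports the lemma from \cite{krahmer2020complex}, where it is essentially this same direct expansion. You should state the implicit hypothesis $\mathbb{E}|a_i|^4<\infty$. It is what justifies exchanging expectation with the finite sum and factorizing the moments, and it holds throughout the paper under the $4+\delta$ moment assumption.
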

Hence, in order to lower bound $\mathcal{Q}_{2\xi}$, it remains to obtain an upper bound for $\E\,\vert \va^{*}\M\va\vert^p$ for some $p>2$.
A standard tool for this purpose is the Hanson--Wright inequality~\cite{rudelson2013hanson}; see, for instance,~\cite{chen2015exact,krahmer2020complex}.
However, this approach relies crucially on the sub-Gaussian assumption on $\va$.
In Section~\ref{subsec:quad}, we instead develop an alternative argument based on decoupling, which yields explicit moment bounds for the quadratic form $\lv\va^{*}\M\va\rv$ under suitable finite-moment assumptions.

For the supremum term $\mathcal{W}_m$, by Hölder's inequality and Lemma~\ref{lm:diag} in Section~\ref{subsec:NSP}, we obtain
\begin{equation}\label{eq:Wm}
\begin{aligned}
\mathcal{W}_m
&\le\E\sup_{\M\in \mT_{\rho,r}}\lV\M\rV_*\cdot\lV\pmb{H}\rV_{op}\\
&\le\sqrt{1+\lk1+\rho^{-1}\rk^{2}}\sqrt{r} \cdot\E\lV\pmb{H}\rV_{op},
\end{aligned}
\end{equation}
	where $\pmb{H}=\frac{1}{m}\sum_{k=1}^m \varepsilon_k \va_k \va_k^{*}$.
    Therefore, to upper bound $\mathcal{W}_m$, it suffices to control $\E\lV\pmb{H}\rV_{op}$.
A standard approach is based on covering number arguments (see, e.g.,~\cite{tropp2015convex,krahmer2020complex}), 
which typically relies on sub-Gaussian concentration and therefore breaks down in the heavy-tailed setting.
In Section~\ref{subsec:covariance}, we show how to upper bound $\E\lV\pmb{H}\rV_{op}$ using recent results on covariance matrix estimation, which allows us to establish recovery guarantees with optimal sample complexity.

\subsection{Quadratic Form}\label{subsec:quad}

We derive an upper bound for the moments of the quadratic form $\lv\va^{*}\M\va\rv$, which will serve as a key technical ingredient in our analysis.
The proof is based on a decoupling argument, in the spirit of the proof of the Hanson--Wright inequality~\cite{rudelson2013hanson}, 
but it does not rely on a sub-Gaussian assumption.
 
\begin{proposition}\label{thm:4order}
Let $\va\in\C^n$ be a random vector with independent mean-zero, variance-one entries $\{a_i\}_{i=1}^n$.
Fix $p\ge2$, and assume that
$\alpha_{2p}=\max_i\E\lz\lv a_i\rv^{2p}\rz<\infty$.
Then for any $\M\in\C^{n\times n}$,
    \begin{equation}\label{eq:4order}
            \E\,\lv\va^{*}\M\va\rv^{p} \le C_p \lk \lv\operatorname{Tr}\lk\M\rk\rv^p+\alpha_{2p}\lV\M\rV_F^{p}\rk,
            \end{equation}
            where $C_p$ is a constant depending only on $p$.
\end{proposition}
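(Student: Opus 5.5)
We split $\va^*\M\va=\sum_i \M_{ii}\lv a_i\rv^2+\sum_{i\ne j}\overline{a_i}\M_{ij}a_j$ into a diagonal part $D$ and an off-diagonal part $O$, and bound $\lV D\rV_{L_p}$ and $\lV O\rV_{L_p}$ separately. By Minkowski's inequality, $\E\lv\va^*\M\va\rv^p\le 2^{p-1}\lk\E\lv D\rv^p+\E\lv O\rv^p\rk$.

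\emph{Diagonal part.} Write $D=\operatorname{Tr}(\M)+\sum_i \M_{ii}\lk\lv a_i\rv^2-1\rk$. The summands $\xi_i:=\lv a_i\rv^2-1$ are independent and mean zero, with $\E\lv\xi_i\rv^p\lesssim_p \alpha_{2p}$. By Rosenthal's inequality (or symmetrization followed by Khintchine and Minkowski),
\begin{equation*}
\E\Bigl\lv\sum_i \M_{ii}\xi_i\Bigr\rv^p\lesssim_p \Bigl(\sum_i \lv\M_{ii}\rv^2\E\lv\xi_i\rv^2\Bigr)^{p/2}+\sum_i\lv\M_{ii}\rv^p\E\lv\xi_i\rv^p .
\end{equation*}
We have $\E\lv\xi_i\rv^2\le \E\lv a_i\rv^4\le \alpha_{2p}^{2/p}$ by Jensen, and $\sum_i\lv\M_{ii}\rv^p\le\lV\M\rV_F^p$ since $p\ge2$. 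So the first term is at most $\alpha_{2p}\lV\M\rV_F^p$ and the second at most $C_p\alpha_{2p}\lV\M\rV_F^p$. This gives $\E\lv D\rv^p\lesssim_p \lv\operatorname{Tr}\M\rv^p+\alpha_{2p}\lV\M\rV_F^p$.

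\emph{Off-diagonal part.} Here we use decoupling, following the Hanson--Wright proof of~\cite{rudelson2013hanson}. By the standard decoupling inequality for convex functions (de la Peña--Montgomery-Smith / Bourgain–Tzafriri; the version with convex $F(x)=\lv x\rv^p$ holds for general independent entries), we have $\E\lv O\rv^p\le C_p\E\lv\sum_{i,j}\overline{a_i}\M_{ij}a'_j\rv^p$ with $\M_{ii}$ set to zero and $\va'$ an independent copy of $\va$. Condition on $\va'$ and set $\vb:=\M_0\va'$, where $\M_0$ is $\M$ with its diagonal removed. Then $\sum_i \overline{a_i}b_i$ is a sum of independent mean-zero terms, and Rosenthal gives
\begin{equation*}
\E_{\va}\Bigl\lv\sum_i\overline{a_i}b_i\Bigr\rv^p\lesssim_p \lV\vb\rV_2^p+\alpha_{2p}^{1/2}\sum_i\lv b_i\rv^p\lesssim_p\alpha_{2p}^{1/2}\lV\vb\rV_2^p,
\end{equation*}
using $\E\lv a_i\rv^p\le\alpha_{2p}^{1/2}$ and $\alpha_{2p}\ge1$. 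Next we need $\E\lV\M_0\va'\rV_2^p$. Write $\lV\M_0\va'\rV_2^2=\va'^*\pmb{N}\va'$ with $\pmb{N}=\M_0^*\M_0\succeq0$. Then $\lV\lV\M_0\va'\rV_2^2\rV_{L_{p/2}}$ is the $L_{p/2}$-norm of a PSD quadratic form, which calls for a recursion on $p$. A cruder route avoids this: by Minkowski in $L_{p/2}$, $\lV\sum_j \lv(\M_0)_{\cdot j}\cdots\rv\rV$ is tricky, so instead use the triangle inequality $\lV\M_0\va'\rV_2=\lV\sum_j a'_j\pmb{m}_j\rV_2$, where $\pmb{m}_j$ are the columns. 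A vector-valued Rosenthal inequality in Hilbert space (symmetrize, then apply Khintchine–Kahane conditionally) gives
\begin{equation*}
\E\lV\sum_j a'_j\pmb{m}_j\rV_2^p\lesssim_p \Bigl(\sum_j\lV\pmb{m}_j\rV_2^2\Bigr)^{p/2}+\alpha_{2p}^{1/2}\sum_j\lV\pmb{m}_j\rV_2^p\lesssim_p\alpha_{2p}^{1/2}\lV\M\rV_F^p .
\end{equation*}
Combining, $\E\lv O\rv^p\lesssim_p\alpha_{2p}\lV\M\rV_F^p$, which finishes the proof.

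\emph{Main obstacle.} The delicate step is the decoupling for general (non-symmetric, complex) independent entries without sub-Gaussianity, and tracking how the moment constants depend on $\alpha_{2p}$. I would first try the convex-function decoupling theorem of de la Peña--Montgomery-Smith, which needs only independence. If it is inconvenient for complex Hermitian forms, I would fall back on the Rudelson--Vershynin selector trick: introduce random selectors $\delta_i$, use $\E_\delta$ and Jensen to reduce to $\sum_{i\in\Lambda,j\notin\Lambda}$, and then apply the Rosenthal steps above.
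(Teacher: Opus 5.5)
Your proposal is correct and follows essentially the same route as the paper. Both arguments split off $\operatorname{Tr}(\M)$, bound the diagonal part $\sum_i\M_{ii}(\lv a_i\rv^2-1)$ by Rosenthal, and handle the off-diagonal part by decoupling, a conditional Rosenthal step, and symmetrization plus Khintchine--Kahane to control $\E\lV\M_{\mathrm{off}}\va'\rV_{\ell_2}^p$. The only differences are cosmetic. You bound $\E\lv a_i\rv^p$ by $\alpha_{2p}^{1/2}$ directly, whereas the paper uses $\alpha_p$ and then $\alpha_p^2\le\alpha_{2p}$. You also package the last step as a Hilbert-space Rosenthal inequality, whereas the paper uses the triangle inequality in $L_{p/2}$.
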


\begin{proof}[Proof of Proposition~\ref{thm:4order}]
The proof proceeds in five steps.

\textbf{Step 1. Expansion.} We write
\begin{equation*}
\begin{aligned}
\va^*\M\va
&=\sum_{i=1}^n \M_{ii}\lv a_i\rv^2+\sum_{i\neq j}\overline{a_i}\,\M_{ij}\,a_j\\
&=\operatorname{Tr}\lk\M\rk+\sum_{i=1}^n \M_{ii}\lk\lv a_i\rv^2-1\rk+\sum_{i\neq j}\overline{a_i}\,\M_{ij}\,a_j\\
&:= \operatorname{Tr}\lk \M\rk+D+S.
\end{aligned}
\end{equation*}
Therefore, by the elementary inequality
$\lv x+y+z\rv^p\lesssim_p \lv x\rv^p+\lv y\rv^p+\lv z\rv^p$,
it follows that
\begin{equation}\label{eq:san}
\E\lv\va^*\M\va\rv^p \lesssim_p  \lv\operatorname{Tr}\lk\M\rk\rv^p+\E\lv D\rv^p+\E\lv S\rv^p.
\end{equation}

\textbf{Step 2. Estimate $D$.} 
We begin with the diagonal part
\begin{equation*}
D=\sum_{i=1}^n X_i,\quad \text{where } X_i=\M_{ii}\lk\lv a_i\rv^2-1\rk.
\end{equation*}
Since the random variables $\{X_i\}_{i=1}^n$ are independent and mean-zero, we appeal to the following form of Rosenthal's inequality (see~\cite[Theorem~3]{rosenthal1970subspaces}):
for any independent mean-zero random variables $\{Y_i\}_{i=1}^n$ and any $p\ge2$,
\begin{equation}\label{eq:rosen}
\E\lv\sum_{i=1}^n Y_i\rv^p
\lesssim_p\lk\sum_{i=1}^n \E \lv Y_i\rv^2\rk^{p/2}+\sum_{i=1}^n \E\lv Y_i\rv^p.
\end{equation}
We next estimate the moments of $X_i$. 
For each $p\ge 2$, we write $\alpha_p:=\max_{i}\E\lz\lv a_i\rv^p\rz$.
Since $\E\lv a_i\rv^2=1$, we have
\begin{equation}\label{eq:E2}
\E\lv X_i\rv^2
=\lv \M_{ii}\rv^2\,\E\lv\lv a_i\rv^2-1\rv^2
=\lv \M_{ii}\rv^2\lk \E\lv a_i\rv^4-1\rk
\le\alpha_4 \,\lv \M_{ii}\rv^2.
\end{equation}
Moreover,
\begin{equation}\label{eq:Ep}
\E\lv X_i\rv^p
=\lv \M_{ii}\rv^p\,\E\lv\lv a_i\rv^2-1\rv^p\lesssim_p \alpha_{2p}\,\lv \M_{ii}\rv^p.
\end{equation}
Applying~\eqref{eq:rosen} to $\{X_i\}_{i=1}^n$, we obtain
\begin{equation}\label{eq:Diag}
\begin{aligned}
\E\lv D\rv^p
&\lesssim_p \lk\sum_{i=1}^n \E\lv X_i\rv^2\rk^{p/2}+\sum_{i=1}^n\E\lv X_i\rv^p\\
&\lesssim_p\alpha_{4}^{p/2}\lk\sum_{i=1}^n \lv \M_{ii}\rv^2\rk^{p/2}+\alpha_{2p}\sum_{i=1}^n \lv \M_{ii}\rv^p\\
&\lesssim_p\alpha_{2p}\,\lV\M\rV_F^p.
\end{aligned}
\end{equation}
Here, we used that for $p\ge 2$,
$\sum_{i=1}^n \lv \M_{ii}\rv^p\le\lk\sum_{i=1}^n \lv \M_{ii}\rv^2\rk^{p/2}\le\lV\M\rV_F^p$,
and that $\alpha_{2p}\ge\alpha_{4}^{p/2}\ge 1$.

\textbf{Step 3. Decoupling.}
To estimate the off-diagonal part $S=\sum_{i\neq j}\overline{a_i}\,\M_{ij}\,a_j$, we first invoke a decoupling argument.
Let $\va'$ be an independent copy of $\va$. 
Then, by the decoupling inequality for the quadratic form (see~\cite[Theorem~6.1.1]{vershynin2018high}), we obtain
\begin{equation*}
\E\lv S\rv^p
\lesssim_p\E\lv\sum_{i\neq j}\overline{a_i}\,\M_{ij}\,a_j'\rv^p.
\end{equation*}
Set
\begin{equation*}
\widetilde{S}:=\sum_{i\neq j}\overline{a_i}\,\M_{ij}\,a_j'
=\sum_{i=1}^n \overline{a_i}\,c_i,
\qquad
\text{where}\,c_i:=\sum_{j\neq i} \M_{ij}a_j'.
\end{equation*}
Conditioning on $\va'$, and applying Rosenthal's inequality~\eqref{eq:rosen}, we obtain
\begin{equation*}
\begin{aligned}
\E_{\va}\lv \widetilde{S}\rv^p
&\lesssim_p\lk\sum_{i=1}^n \lv c_i\rv^2\rk^{p/2}+\sum_{i=1}^n \lv c_i\rv^p\,\E\lv a_i\rv^p\\
&\lesssim_p\lk\sum_{i=1}^n \lv c_i\rv^2\rk^{p/2}+\alpha_p\lk\sum_{i=1}^n \lv c_i\rv^2\rk^{p/2}\\
&\lesssim_p\alpha_p \lk\sum_{i=1}^n \lv \sum_{j\neq i} \M_{ij}a_j'\rv^2\rk^{p/2}=\alpha_p\lV\M_{\mathrm{off}}\va'\rV_{\ell_2}^p.
\end{aligned}
\end{equation*}
Here, $\M_{\mathrm{off}}$ denotes the off-diagonal part of $\M$.
Taking expectation with respect to $\va'$, we conclude that
\begin{equation}\label{eq:Moff}
\E\lv S\rv^p\lesssim_p\E\lv \widetilde{S} \rv^p
\lesssim_p\alpha_{p}\, \E\lV\M_{\mathrm{off}}\va'\rV_{\ell_2}^p.
\end{equation}

\textbf{Step 4. Estimate $S$.}
To estimate $S$, we upper bound the term $\E\lV\M_{\mathrm{off}}\va'\rV_{\ell_2}^p$.
Let $\M_{\mathrm{off},j}$ denote the $j$-th column of $\M_{\mathrm{off}}$, and let $\{\varepsilon_j\}_{j=1}^n$ be a Rademacher sequence independent of $\va'$. 
By symmetrization (see Exercise~6.4.5 in~\cite{vershynin2018high}) and the Khintchine--Kahane inequality in $\mathbb{C}^n$ (see~\cite{kahane1985some}), we have
\begin{equation}\label{eq:Moff0}
\begin{aligned}
\E\lV\M_{\mathrm{off}}\va'\rV_{\ell_2}^p
&=\E_{\va'}\lV\sum_{j=1}^n a_j'\M_{\mathrm{off},j}\rV_{\ell_2}^p\\
&\le 2^p\E_{\va'}\E_{\pmb{\varepsilon}}\lV\sum_{j=1}^n \varepsilon_j a_j'\M_{\mathrm{off},j}\rV_{\ell_2}^p\\
&\lesssim_p\E_{\va'}\lk\E_{\pmb{\varepsilon}}\lV\sum_{j=1}^n \varepsilon_j a_j'\M_{\mathrm{off},j}\rV_{\ell_2}^2\rk^{p/2} \\
&=\E_{\va'}\lk\sum_{j=1}^n \lv a_j'\rv^2\lV\M_{\mathrm{off},j}\rV_{\ell_2}^2\rk^{p/2}.
\end{aligned}
\end{equation}
Since $p/2\ge1$, the triangle inequality yields
\begin{equation}\label{eq:Lq}
\lV\sum_{j=1}^n \lv a_j'\rv^2\lV\M_{\mathrm{off},j}\rV_{\ell_2}^2\rV_{L_{p/2}}
\le \sum_{j=1}^n \lV\M_{\mathrm{off},j}\rV_{\ell_2}^2\,\lV\lv a_j'\rv^2\rV_{L_{p/2}}
\le \alpha_{p}^{2/p}\lV\M_{\mathrm{off}}\rV_F^2.
\end{equation}
Therefore, combining~\eqref{eq:Moff0} and~\eqref{eq:Lq}, we obtain
\begin{equation*}
\E\lV\M_{\mathrm{off}}\va'\rV_{\ell_2}^p\lesssim_p\alpha_{p}\lV\M_{\mathrm{off}}\rV_F^p\le \alpha_{p}\lV\M\rV_F^p.
\end{equation*}
Combining this with~\eqref{eq:Moff}, we conclude that
\begin{equation}\label{eq:offdiag}
\E\lv S\rv^p\lesssim_p\alpha^2_{p}\,\lV\M\rV_F^p.
\end{equation}

\textbf{Step 5. Summary.}
Combining the estimates~\eqref{eq:san}, \eqref{eq:Diag}, and~\eqref{eq:offdiag}, we finally obtain
\begin{equation*}
\begin{aligned}
\E\lv\va^*\M\va\rv^p
&\lesssim_p\lv\operatorname{Tr}\lk\M\rk\rv^p+\lk\alpha_{2p}+\alpha_p^2\rk\lV\M\rV_F^p\\
&\lesssim_p\lv\operatorname{Tr}\lk\M\rk\rv^p+\alpha_{2p}\lV\M\rV_F^p.
\end{aligned}
\end{equation*}
Here, in the last step we used that $\alpha_{2p}\ge \alpha^2_{p}$.
\end{proof}

\subsection{Covariance Estimation}\label{subsec:covariance}

To upper bound $ \E\lV\frac{1}{m}\sum_{k=1}^m\varepsilon_k\va_k\va_{k}^{*}\rV_{op}$ 
in the heavy-tailed setting, we leverage recent advances in covariance matrix estimation for heavy-tailed distributions~\cite{tikhomirov2018sample,abdalla2024covariance}. 
The proof of the following theorem is inspired by the strategy developed in~\cite[Section 3.2]{jirak2025concentration},  
whereas our setting is complex-valued and requires some additional modifications.

\begin{theorem}\label{thm:chaos}
Let $\va\in\C^n$ be a random vector with independent mean-zero, variance-one entries $\{a_i\}_{i=1}^n$. 
Assume that $\alpha_p=\max_{i}\E\lz\lv a_i\rv^p\rz<\infty$ for some $p>4$.
Let $\left\{\va_k\right\}_{k=1}^m$ be independent copies of $\va$, and let $\{\varepsilon_{k}\}_{k=1}^{m}$ be a Rademacher sequence independent of $\left\{\va_k\right\}_{k=1}^m$. 
If $m\ge C_1 n$ for a sufficiently large constant $C_1 > 0$,
then
    \begin{equation}\label{eq:order4}
            \E\lV\frac{1}{m}\sum_{k=1}^m\varepsilon_k\va_k\va_{k}^{*}\rV_{op}
            \le C_2\lk p\rk\lk\alpha_p^{2/p}\sqrt{\frac{n}{m}}+\frac{n}{m}\rk,
            \end{equation}
            where $C_2\lk p\rk>0$ is a constant depending only on $p$.
\end{theorem}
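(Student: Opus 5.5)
We will bound $\E\lV\pmb{H}\rV_{op}$, where $\pmb{H}=\frac1m\sum_k\varepsilon_k\va_k\va_k^*$, by reducing to a real problem and then splitting it into a bounded part and a tail part. Following the strategy of~\cite[Section 3.2]{jirak2025concentration}, we first remove the complex structure. Write $\va=\Re(\va)+i\Im(\va)$ and identify $\C^n$ with $\R^{2n}$. For Hermitian $\pmb{H}$ we have $\lV\pmb{H}\rV_{op}=\sup_{\xu\in\mathbb{S}^{n-1}_{\C}}\lv\xu^*\pmb{H}\xu\rv$ and $\xu^*\va_k\va_k^*\xu=\lv\lg\va_k,\xu\rg\rv^2=\lg \pmb{b}_k,\tilde\xu\rg^2+\lg \pmb{b}_k',\tilde\xu\rg^2$. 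Here $\pmb{b}_k=(\Re\va_k,\Im\va_k)\in\R^{2n}$, $\pmb{b}_k'=(-\Im\va_k,\Re\va_k)$, and $\tilde\xu\in\mathbb{S}^{2n-1}$. This reduces the problem to controlling $\sup_{\pmb{v}\in\mathbb{S}^{2n-1}}\lv\frac1m\sum_k\varepsilon_k\lg\pmb{b}_k,\pmb{v}\rg^2\rv$ for a real vector $\pmb{b}$, and similarly for $\pmb{b}'$. The coordinates of $\pmb{b}$ are not all independent, since $\Re a_i$ and $\Im a_i$ are coupled. However, the pairs $(\Re a_i,\Im a_i)$ are independent across $i$. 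This suffices for the two properties we need: the marginal moment bound $\sup_{\pmb{v}}\E\lv\lg\pmb{b},\pmb{v}\rg\rv^p\lesssim_p\alpha_p$, which follows from Rosenthal's inequality~\eqref{eq:rosen} applied to the independent blocks, and the norm moment $\E\lV\pmb{b}\rV^{p}\lesssim_p \alpha_p n^{p/2}$.

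For the heavy-tailed step, we use the decomposition from covariance estimation under $p>4$ moments~\cite{tikhomirov2018sample,abdalla2024covariance}. Fix a level $L=\sqrt{m}\,\alpha_p^{1/p}$ or similar, and split each summand as $\lg\pmb{b}_k,\pmb{v}\rg^2=\lg\pmb{b}_k,\pmb{v}\rg^2\mathbbm{1}\{\lv\lg\pmb{b}_k,\pmb{v}\rg\rv\le L\}+(\text{tail})$.

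The truncated part gives a bounded empirical process. Its variance is at most $\E\lg\pmb{b},\pmb{v}\rg^4\lesssim\alpha_p^{4/p}$, and we control it with Talagrand/Bousquet-type concentration or a chaining argument. This yields the term $\alpha_p^{2/p}\sqrt{n/m}$, with the $n$ coming from the complexity of the sphere.

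The tail part we bound by the nonnegative quantity $\sup_{\pmb{v}}\frac1m\sum_k\lg\pmb{b}_k,\pmb{v}\rg^2\mathbbm{1}\{\lv\lg\pmb{b}_k,\pmb{v}\rg\rv>L\}$, since Rademacher signs do not help here. This is exactly the quantity controlled in~\cite{tikhomirov2018sample} and~\cite[Section 3.2]{jirak2025concentration}. For each $\pmb{v}$, only few indices $k$ exceed the level, and a union argument over level sets, combined with $\max_k\lV\pmb{b}_k\rV^2/m$, gives a contribution of order $\alpha_p^{2/p}\sqrt{n/m}+\E\max_k\lV\pmb{b}_k\rV^2/m$. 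We bound the last term by $n/m$ plus a lower-order term using $\E\max_k\lV\pmb{b}_k\rV^2\le (m\,\E\lV\pmb{b}\rV^p)^{2/p}\lesssim m^{2/p}\alpha_p^{2/p}n$. This is $\lesssim n+\alpha_p^{2/p}\sqrt{nm}$ when $m\ge C_1n$ and $p>4$, so it is absorbed into the two terms of~\eqref{eq:order4}.

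The main obstacle is the uniform tail estimate. The argument of~\cite{tikhomirov2018sample,jirak2025concentration} requires independence or at least a weak $L_p$ marginal condition for all directions. We must verify that this condition holds, with constants depending only on $p$ and $\alpha_p$, for the realified vectors, whose coordinates are only blockwise independent. Our first approach is to check the argument of~\cite[Section 3.2]{jirak2025concentration}, which relies only on $\sup_{\pmb{v}}\lV\lg\pmb{b},\pmb{v}\rg\rV_{L_p}$ and on norm concentration of $\pmb{b}$. We would establish both for the blocks via Rosenthal's inequality, as in the proof of Proposition~\ref{thm:4order}, and then invoke their bound directly.
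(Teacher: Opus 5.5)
Your proposal has a genuine gap. The two steps you make explicit fail as stated, and the step you defer cannot be imported in the form you need.

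First, the absorption of the maximal term is false. From $\E\max_k\lV\pmb b_k\rV_{\ell_2}^2\lesssim m^{2/p}\alpha_p^{2/p}n$ you cannot get $\lesssim n+\alpha_p^{2/p}\sqrt{nm}$ under $m\ge C_1n$, because $m^{2/p}n\le\sqrt{nm}$ only when $m\ge n^{p/(p-4)}$. At $m=C_1n$, your bound on $\E\max_k\lV\pmb b_k\rV_{\ell_2}^2/m$ exceeds the target $\alpha_p^{2/p}\sqrt{n/m}+n/m$ by a factor of order $n^{2/p}$. The factor $m^{2/p}$ may only hit the fluctuation of $\lV\va\rV_{\ell_2}^2$, not its mean $n$. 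The paper writes $\max_k\lV\va_k\rV_{\ell_2}^2\le n+\max_k\lv\lV\va_k\rV_{\ell_2}^2-n\rv$ and bounds the expectation of the last term by $\lk m\operatorname{Var}\lV\va\rV_{\ell_2}^2\rk^{1/2}\le\sqrt{\alpha_4mn}$.

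Second, with $L=\sqrt m\,\alpha_p^{1/p}$ the split is vacuous. We have $\sup_{\pmb v}\lv\lg\pmb b_k,\pmb v\rg\rv=\lV\pmb b_k\rV_{\ell_2}$, and by Fact~\ref{lm:max}, $\max_k\lV\pmb b_k\rV_{\ell_2}^2\lesssim n+\alpha_p^{2/p}\sqrt{mn}<L^2$ with high probability once $C_1$ is large. So the tail part vanishes and the ``bounded'' part is the whole process. Its summands $\frac1m\lg\pmb b_k,\pmb v\rg^2\mathbbm{1}\{\cdot\}$ are only bounded by $\alpha_p^{2/p}$. A Bernstein-plus-net or chaining estimate over $\mathbb S^{2n-1}$ therefore yields a term of order $L^2n/m=\alpha_p^{2/p}n$, not $\alpha_p^{2/p}\sqrt{n/m}$. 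Talagrand/Bousquet inequalities do not help here: they give concentration around the mean of the supremum, not a bound on that mean. The bounded part gives $\alpha_p^{2/p}\sqrt{n/m}$ only at a level $L\asymp\alpha_p^{1/p}\lk m/n\rk^{1/4}$. At that level the tail part is precisely the hard uniform estimate of~\cite{tikhomirov2018sample}, for which you give no argument.

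Nor can that estimate be ``invoked directly.'' What is available (Lemma~\ref{lm:covariance}) is a high-probability bound on $\lV\frac1m\sum_k\xu_k\xu_k^\top-\pmb I\rV_{op}$ for isotropic $\xu$, holding only with probability $1-e^{-n}-c(p)/m$. Your $\pmb b=\lk\Re\va,\Im\va\rk$ is not isotropic. Up to a permutation, its covariance is block diagonal with blocks $\E\lz\lk\Re a_i,\Im a_i\rk^\top\lk\Re a_i,\Im a_i\rk\rz$. These equal $\frac12\pmb I_2$ only if $\E a_i^2=0$, and are singular for real $a_i$, whereas Theorem~\ref{thm:chaos} imposes nothing on $\E a_i^2$.

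The paper fixes this by replacing $\va$ with $e^{\mathrm i\theta}\va$ for an independent uniform phase $\theta$. This leaves $\va\va^*$ unchanged and makes the realification exactly $\frac12\pmb I_{2n}$-isotropic with the same marginal $L_p$ bound. The coupling of $\Re a_i$ and $\Im a_i$, which you single out as the main obstacle, is irrelevant, since only marginal moments enter.

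The paper then does the conversion to expectation that your sketch takes for granted. On the bad event $\mathcal E^c$ it uses H\"older's inequality together with the matrix Rosenthal inequality (Lemma~\ref{lm:heavymatrix}) for $\X_k=\frac1m\lk\va_k\va_k^*-\pmb I_n\rk$. The logarithmic losses there are absorbed by $\mathbb P\lk\mathcal E^c\rk^{1/2}\lesssim_p e^{-n}+m^{-1/2}$. Finally it passes to the Rademacher sum by desymmetrization, $\E\lV\sum_k\varepsilon_k\X_k\rV_{op}\le2\E\lV\sum_k\X_k\rV_{op}$, together with $\E\lv\frac1m\sum_k\varepsilon_k\rv\le m^{-1/2}$.
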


The proof of the above theorem relies on the following two lemmas.
The first one is a non-asymptotic bound for covariance matrix estimation under heavy-tailed distributions. 

\begin{lemma}[\cite{tikhomirov2018sample,jirak2025concentration}]\label{lm:covariance}
Let $\xu\in\mathbb{R}^{n}$ be an isotropic random vector, and assume that $\sup_{\x\in\mathbb{S}^{n-1}}\lk\E\lv\lg\xu,\x\rg\rv^p\rk^{1/p}\le \kappa_p$ where $p>4$.
Let $\left\{\xu_k\right\}_{k=1}^m$ be independent copies of $\xu$. 
Assume that $m\ge C_3 n$ for a sufficiently large
constant $C_3 > 0$.
Then, with probability at least $1-e^{-n}-\frac{c\lk p\rk}{m}$, it holds that
\begin{equation}\label{eq:con of heavy}
\lV\frac{1}{m}\sum_{k=1}^m\xu_k\xu_{k}^{\top}-\pmb{I}_n\rV_{op}
\le
C_4\lk p\rk\,\lk \kappa_p^2\cdot\sqrt{\frac{n}{m}}+\frac{\max_{k}\lV\xu_k\rV_{\ell_2}^2}{m}\rk,
\end{equation}
where $c\lk p\rk$ and $C_4\lk p\rk$ are constants depending only on $p$.
\end{lemma}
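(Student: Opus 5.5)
The plan is to write the deviation as a supremum over directions,
\begin{equation*}
\lV\frac{1}{m}\sum_{k=1}^m\xu_k\xu_k^{\top}-\pmb{I}_n\rV_{op}=\sup_{\x\in\mathbb{S}^{n-1}}\lv\frac{1}{m}\sum_{k=1}^m\lk\lg\xu_k,\x\rg^2-1\rk\rv,
\end{equation*}
and to treat the lower and upper deviations separately. This follows the strategy of~\cite{tikhomirov2018sample} in the form streamlined in~\cite{jirak2025concentration}.

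\textbf{Lower deviation.} For the lower tail, nonnegativity of $\lg\xu_k,\x\rg^2$ means only second and fourth moments matter. I would truncate, replacing $\lg\xu_k,\x\rg^2$ by $\min\{\lg\xu_k,\x\rg^2,T\}$; this can only decrease the sum, so a lower bound for the truncated sum gives one for the original. Bernstein's inequality then applies to the truncated variables. The bias from truncation is at most $\E\lg\xu,\x\rg^4/T\le\kappa_p^4/T$. A $1/2$-net argument on $\mathbb{S}^{n-1}$ (after a standard smoothing of the truncation so it is Lipschitz in $\x$) yields
\begin{equation*}
\inf_{\x}\frac{1}{m}\sum_{k}\lg\xu_k,\x\rg^2\ge1-C\kappa_p^2\sqrt{n/m}
\end{equation*}
with probability $1-e^{-n}$, taking $T\asymp\kappa_p^2\sqrt{m/n}$.

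\textbf{Upper deviation, bulk part.} I would decompose each summand by dyadic levels of $\lv\lg\xu_k,\x\rg\rv$. The part with $\lv\lg\xu_k,\x\rg\rv\le T$ is handled exactly as in the lower tail and contributes $\kappa_p^2\sqrt{n/m}$.

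\textbf{Upper deviation, large projections.} For the part with $\lv\lg\xu_k,\x\rg\rv>T$, the key intermediate claim is a \emph{uniform counting bound}: with probability $1-c(p)/m$, for every $\x\in\mathbb{S}^{n-1}$ and every dyadic level $t=2^jT$,
\begin{equation*}
\#\{k:\lv\lg\xu_k,\x\rg\rv>t\}\le\ell(t)\asymp m\lk\kappa_p/t\rk^{p}+n.
\end{equation*}
Given this, the large part at level $t$ is bounded by $\sup_{\lv I\rv\le\ell(t)}\lV\sum_{k\in I}\xu_k\xu_k^{\top}\rV_{op}$. I would split each such subset Gram matrix into its diagonal, which is at most $\max_k\lV\xu_k\rV_{\ell_2}^2$ and is the origin of the second term in~\eqref{eq:con of heavy}, and its off-diagonal part $\sup_{\x}\sum_{k\ne l\in I}\lg\xu_k,\x\rg\lg\xu_l,\x\rg$. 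I would control the off-diagonal part by decoupling and a union bound over subsets $I$, which costs a factor of order $e^{\ell\log(em/\ell)}$ in probability. Summing over dyadic levels converges precisely because $p>4$: the level-$t$ contribution behaves like $t^2\ell(t)/m\sim t^{2-p}$ times moment factors, and this sum is dominated by its first term, of order $\kappa_p^2\sqrt{n/m}$.

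The main obstacle is the uniform counting bound, since a union bound over a net with only polynomial moments is too weak. I would first prove it for a fixed set $I$ by bounding $\E\lV\sum_{k\in I}\xu_k\xu_k^{\top}\rV_{op}$ through a decoupled chaos estimate using only $p$-th moments. I would then pass to all $\x$ via the observation that many large projections along $\x$ force a large operator norm on that index set. The polynomial failure probability $c(p)/m$ should come from Markov's inequality applied to $\max_k\lV\xu_k\rV_{\ell_2}$-type quantities at this step.
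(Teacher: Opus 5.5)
The paper does not prove this lemma. It imports the rate from \cite{tikhomirov2018sample} and the stated probability $1-e^{-n}-c(p)/m$ from equation~(36) of \cite[Theorem~6]{jirak2025concentration}, specialized to the isotropic case. Your outline tries to reconstruct Tikhomirov's theorem from scratch. Its architecture (truncation, counting large projections, subset Gram matrices whose diagonal produces $\max_k\lV\xu_k\rV_{\ell_2}^2$) is the right family, but the step you flag as the main obstacle is not closed by what you propose.

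To make either the counting bound or the off-diagonal estimate uniform over all index sets $I$ with $\lv I\rv\le\ell$, you union-bound over $\binom{m}{\ell}\approx e^{\ell\log(em/\ell)}$ sets. So each fixed-$I$ event must fail with probability far below $e^{-\ell\log(em/\ell)}$. A fixed-$I$ expectation bound plus Markov, or any moment bound such as Lemma~\ref{lm:heavymatrix}, gives only polynomial decay. Under $p$-th marginal moments this is the truth, not an artifact. After decoupling and conditioning, for a frozen direction $\y$ one has $\mathbb{P}\lk\sum_{k\in I}\lg\xu_k,\y\rg^2\ge C\ell\lV\y\rV_{\ell_2}^2\rk\ge\mathbb{P}\lk\max_{k\in I}\lg\xu_k,\y\rg^2\ge C\ell\lV\y\rV_{\ell_2}^2\rk$, and the right-hand side is in general only polynomially small in $\ell$. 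This is exactly why the subset-Gram union bound works for sub-exponential (e.g.\ log-concave) marginals and fails here.

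Your observation that many large projections along $\x$ force a large operator norm on that index set only moves the uniformity problem from $\x$ to $I$. With heavy tails, exponentially small probabilities come from independence across $k$ at a fixed unit direction, e.g.\ $\mathbb{P}\lk\#\{k:\lv\lg\xu_k,\y\rg\rv>t\}\ge j\rk\le\binom{m}{j}\lk\kappa_p/t\rk^{pj}$. The substance of \cite{tikhomirov2018sample} is transferring such bounds from fixed (or net) directions to all of $\mathbb{S}^{n-1}$ without an a priori operator-norm bound. That mechanism is missing from your sketch.

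Two further steps fail as written.

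First, the standalone lower-tail argument with a $1/2$-net does not reach precision $\kappa_p^2\sqrt{n/m}$. Smoothing the truncation makes it Lipschitz in the scalar $\lg\xu_k,\x\rg$. But the only available control of the net-to-sphere error is $2\sqrt{T}\,\lambda_{\max}^{1/2}\epsilon$, where $\lambda_{\max}$ is the top eigenvalue of $\frac1m\sum_k\xu_k\xu_k^{\top}$, and this is unbounded in the heavy-tailed setting. Even if $\lambda_{\max}=O(1)$, it forces $\epsilon$ of order $(n/m)^{3/4}$ and hence a logarithmic loss. A constant-mesh net only works via $\lV\pmb{B}\rV_{op}\le(1-2\epsilon)^{-1}\max_{\y\in\mathcal{N}}\lv\lg\pmb{B}\y,\y\rg\rv$ for $\pmb{B}=\frac1m\sum_k\xu_k\xu_k^{\top}-\pmb{I}_n$. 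That inequality requires upper control at the net points, i.e.\ the hard part. Alternatively, you could import Oliveira's PAC-Bayesian lower-tail bound.

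Second, the dyadic bookkeeping does not close. With $\ell(t)\asymp m(\kappa_p/t)^p+n$, the $n$-part contributes $t^2n/m$ at level $t$. This grows with $t$, so the series is dominated by its last term, not its first. If instead each level is charged the diagonal $\max_k\lV\xu_k\rV_{\ell_2}^2$, summing over levels costs a logarithmic factor. The diagonal must be extracted once, from the single set $\{k:\lv\lg\xu_k,\x\rg\rv>T\}$.

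For this paper, citing \cite{tikhomirov2018sample,jirak2025concentration}, as the authors do, is the appropriate justification.
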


\begin{remark}
The isotropic covariance estimate in Lemma~\ref{lm:covariance} goes back to Tikhomirov~\cite{tikhomirov2018sample}. 
The high-probability formulation stated above follows from equation~(36) in~\cite[Theorem~6]{jirak2025concentration}, specialized to the isotropic case. 
More general covariance estimates for heavy-tailed random vectors with nonidentity covariance were developed in~\cite{abdalla2024covariance,jirak2025concentration}, where the error bounds are expressed in terms of the \textit{effective rank} of the covariance matrix $\pmb{\Sigma}$, defined as
$r\lk\pmb{\Sigma}\rk=\operatorname{Tr}\lk\pmb{\Sigma}\rk/\lV\pmb{\Sigma}\rV_{op}$.
In the present paper, the isotropic version is sufficient for our purposes.
\end{remark}

To derive the desired expectation bound, we also require a Rosenthal-type inequality for heavy-tailed random matrices.

\begin{lemma}[{\cite[Theorem 3]{jirak2025concentration}}]\label{lm:heavymatrix}
Let $\X_1,\ldots,\X_m\in\mathbb{C}^{n\times n}$ be a sequence of centered, independent, Hermitian random matrices.
Set $M=\max_{k}\lV \X_k\rV_{op}$, and $\sigma^2=\lV\pmb{V}^2_m\rV_{op}$, where $\pmb{V}^2_m\succeq\sum_{k=1}^m \E\,\X_k^2$.
Then for all $p\ge1$, we have the following moment inequality:
\begin{equation*}
\lk\E\lV\sum_{k=1}^m \X_k\rV_{op}^p\rk^{1/p}
\le C_5\lk\sqrt{Q}\sigma+Q\,\E M+\frac{p}{\log(ep)}\,\lk\E M^p\rk^{1/p}\rk,
\end{equation*}
where $C_5>0$ is an absolute constant and $Q:=\max\left\{\log\lk r\lk\pmb{V}^2_m \rk\rk ,p\right\}$.
\end{lemma}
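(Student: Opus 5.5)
Since Lemma~\ref{lm:heavymatrix} is quoted from~\cite{jirak2025concentration}, the paper needs no proof of its own. If I were to reprove it, I would split the problem into two parts. First, a bound on the \emph{expectation} $\E\lV S\rV_{op}$, where $S:=\sum_{k=1}^m\X_k$. Second, a general principle that upgrades expectation bounds to $L_p$ bounds for sums of independent vectors in a Banach space, here $(\C^{n\times n},\lV\cdot\rV_{op})$. Concretely, I would aim for
\begin{equation*}
\lk\E\lV S\rV_{op}^p\rk^{1/p}\lesssim \E\lV S\rV_{op}+\sqrt{p}\,\sigma_{w}+\frac{p}{\log(ep)}\lk\E M^p\rk^{1/p},
\end{equation*}
where $\sigma_{w}^2=\sup_{\lV\xu\rV_{\ell_2}=\lV\pmb{v}\rV_{\ell_2}=1}\sum_k\E\lv\xu^*\X_k\pmb{v}\rv^2$ is the weak variance. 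Since $\X_k$ is Hermitian, $\sum_k\E\lv\xu^*\X_k\pmb{v}\rv^2=\sum_k\E\,\pmb{v}^*\X_k\xu\xu^*\X_k\pmb{v}\le \pmb{v}^*\lk\sum_k\E\X_k^2\rk\pmb{v}$, so $\sigma_w\le\sigma$.

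For this upgrade step I would invoke the Latała / Johnson--Schechtman--Zinn type moment inequality for sums of independent centered Banach-space-valued variables. This is where the optimal factor $p/\log(ep)$ in front of the $\lk\E M^p\rk^{1/p}$ term comes from: it is the sharp Hoffmann-J\o rgensen constant. The middle term comes from Talagrand's concentration (or, at the moment level, Einmahl--Li) for the supremum of the empirical process indexed by $(\xu,\pmb{v})$ on the unit spheres. With this in hand, the $p$-dependence in the lemma is accounted for, since $\sqrt{p}\,\sigma\le\sqrt{Q}\,\sigma$.

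It then remains to show $\E\lV S\rV_{op}\lesssim\sqrt{L}\,\sigma+L\,\E M$ with $L=\log r(\pmb{V}_m^2)$ (interpreting $L\ge1$). I would proceed in three steps. First, by symmetrization, replace $\X_k$ by $\varepsilon_k\X_k$. Second, truncate at the level $u:=c\,\E M$, writing $\X_k=\X_k\mathbbm{1}\{\lV\X_k\rV_{op}\le u\}+\X_k\mathbbm{1}\{\lV\X_k\rV_{op}>u\}$. Third, treat the two pieces separately.

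The bounded piece I would handle, conditionally or directly, by the intrinsic-dimension matrix Bernstein inequality of Tropp. Its tail is
\begin{equation*}
\mathbb{P}\lk\lV S\rV_{op}\ge t\rk\le 4\,r(\pmb{V}_m^2)\exp\lk-\frac{t^2/2}{\sigma^2+ut/3}\rk\quad\text{for }t\ge\sigma+u/3,
\end{equation*}
and integrating it gives $\sqrt{L}\,\sigma+L\,u$. Truncation only decreases the second moments after symmetrization, so $\pmb{V}_m^2$ still dominates the variance. For the unbounded piece I would use Hoffmann-J\o rgensen's inequality: $\E\lV S\rV_{op}\lesssim t_0+\E M$, where $t_0$ is a quantile of $\lV S\rV_{op}$. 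Combined with $\mathbb{P}(M>u)\le 1/c$, this lets me choose $t_0$ from the truncated Bernstein tail on the event $\{M\le u\}$.

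The main obstacle I expect is this expectation step, because the bound must involve $\E M$ rather than $\lV M\rV_{L_\infty}$, and the logarithm must be of the intrinsic dimension $r(\pmb{V}_m^2)$ rather than of $n$. The delicate point is the truncation: the truncated summands are no longer centered. Re-centering them changes the variance proxy only by a factor of at most $4$. However, it introduces a bias of order $\sum_k\E\lV\X_k\rV_{op}\mathbbm{1}\{\lV\X_k\rV_{op}>u\}$, which must be absorbed into $\E M$. I would try to control this bias through the Hoffmann-J\o rgensen quantile argument rather than by bounding it summand by summand.
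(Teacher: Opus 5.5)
\textbf{Verdict.} The paper gives no proof of this lemma; it imports Theorem~3 of Jirak--Minsker--Shen--Wahl. Your sketch therefore has to stand on its own, and it breaks at the ``upgrade'' step.

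\textbf{Your expectation step is fine.} The bound $\E\|S\|_{op}\lesssim\sqrt{L}\,\sigma+L\,\E M$ holds, and it is easier than you fear: after symmetrization, $\varepsilon_k\X_k\mathbf{1}\{\|\X_k\|_{op}\le u\}$ is still symmetric, so there is no recentering bias at all.

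\textbf{The gap is the upgrade inequality.} The bound
\[
\Big(\E\|S\|_{op}^p\Big)^{1/p}\lesssim \E\|S\|_{op}+\sqrt{p}\,\sigma_w+\frac{p}{\log(ep)}\big(\E M^p\big)^{1/p}
\]
is false, already for $n=1$. Take i.i.d.\ $X_k\in\{0,\pm1\}$ with $\mathbb{P}(X_k=\pm1)=\lambda/(2m)$, and let $m\to\infty$, so that $S\to N_1-N_2$ with $N_1,N_2$ i.i.d.\ Poisson$(\lambda/2)$. Then $\sigma_w=\sigma=\sqrt{\lambda}$, $\E|S|\le\sqrt{\lambda}$ and $\E M^p\le 1$. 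So your right-hand side is $\lesssim\sqrt{p\lambda}+p/\log(ep)$.

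On the other side, $\|S\|_{L_p}\ge e^{-\lambda/(2p)}\|N_1\|_{L_p}$. Keep only the term $k\approx p/\log\log p$ in $\E N_1^p=\sum_k e^{-\lambda/2}(\lambda/2)^k k^p/k!$. With $\lambda=p/\log^2 p$ this gives $\|N_1\|_{L_p}\gtrsim p/\log\log p$, whereas your bound is of order $p/\log p$.

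Neither result you invoke has your form. Talagrand's sharp Hoffmann-J\o rgensen inequality reads $\frac{p}{\log p}\big(\E\|S\|+\|M\|_{L_p}\big)$, with the factor also multiplying $\E\|S\|$. The moment versions of Talagrand's inequality (Boucheron--Bousquet--Lugosi--Massart, Einmahl--Li) give $\E\|S\|+\sqrt{p}\,\sigma_w+p\,\|M\|_{L_p}$.

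Your scheme would thus prove $\sqrt{L}\,\sigma+L\,\E M+\sqrt{p}\,\sigma+\frac{p}{\log(ep)}\|M\|_{L_p}$, which the example refutes. The error is your remark that the $p$ inside $Q$ enters only through $\sqrt{p}\,\sigma\le\sqrt{Q}\,\sigma$. In the example $r(\pmb{V}_m^2)=1$, $Q=p$ and $\E M\approx 1$; the term $Q\,\E M=p\,\E M$ is what keeps the lemma true.

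\textbf{Repair: carry $p\,\E M$ through the upgrade.} After symmetrizing, truncate at $u=c\,\E M$ with $c\ge 2$.
\begin{itemize}
\item The bounded part satisfies the Talagrand/Bousquet moment bound $\E\|S\|_{op}+\sqrt{p}\,\sigma_w+p\,u$; its expectation is at most $\E\|S\|_{op}$ by contraction.
\item For the exceedances, $\mathbb{P}(M>u)\le 1/2$ forces $\sum_k\mathbb{P}(\|\X_k\|_{op}>s)\lesssim\mathbb{P}(M>s)$ for all $s\ge u$. Hence $\sum_k\E\|\X_k\|_{op}\mathbf{1}\{\|\X_k\|_{op}>u\}\lesssim\E M$ and $\sum_k\E\|\X_k\|_{op}^p\mathbf{1}\{\|\X_k\|_{op}>u\}\lesssim\E M^p$.
\item The Johnson--Schechtman--Zinn inequality for sums of independent nonnegative variables then bounds the $L_p$ norm of $\sum_k\|\X_k\|_{op}\mathbf{1}\{\|\X_k\|_{op}>u\}$ by $\frac{p}{\log(ep)}\big(\E M+\|M\|_{L_p}\big)$.
\end{itemize}
Together these give $\E\|S\|_{op}+\sqrt{p}\,\sigma+p\,\E M+\frac{p}{\log(ep)}\|M\|_{L_p}$. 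Combining this with your expectation bound and $L+p\lesssim Q$ yields the lemma.
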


\subsubsection{Proof of Theorem~\ref{thm:chaos}}\label{subsec:proofchaos}

The proof proceeds in five steps.
In \textbf{Step 1}, we extend Lemma~\ref{lm:covariance} to complex-valued isotropic random vectors and obtain a high probability estimate. 
In \textbf{Steps 2--4}, we apply Lemma~\ref{lm:heavymatrix} to convert it into an expectation bound. 
In \textbf{Step 5}, we use the assumptions on $\va$ in Theorem~\ref{thm:chaos} to eliminate the maximum term.

\textbf{Step 1: From $\R^n$ to $\C^n$.}
We first use Lemma~\ref{lm:covariance} to prove the following lemma.
\begin{lemma}\label{lm:chaos}
Let $\va\in\mathbb{C}^{n}$ be an isotropic random vector such that $\sup_{\x\in\mathbb{S}_\mathbb{C}^{n-1}} \lk\E\lv\lg\va,\x\rg\rv^p\rk^{1/p}\le \tilde{\kappa}_p$ where $p>4$.
Let $\left\{\va_k\right\}_{k=1}^m$ be independent copies of $\va$. 
Assume that $m\ge \widetilde{C} n$ for a sufficiently large
constant $\widetilde{C} > 0$.
Then with probability at least $1-e^{-2n}-\frac{\tilde{c}\lk p\rk}{m}$, it holds that
\begin{equation}
\lV\frac{1}{m}\sum_{k=1}^m\va_k\va_{k}^{*}-\pmb{I}_n\rV_{op}
\le
\widetilde{C}\lk p\rk\,\lk\tilde{\kappa}_p^2\cdot\sqrt{\frac{n}{m}}+\frac{\max_{k}\lV\va_k\rV_{\ell_2}^2}{m}\rk.
\end{equation}
Here,  $\tilde{c}\lk p\rk$ and $\widetilde{C}\lk p\rk$ denote constants depending only on $p$.
\end{lemma}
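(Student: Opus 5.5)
The plan is to reduce the complex statement to Lemma~\ref{lm:covariance} via the standard real embedding. For $\va\in\C^n$ set $\xu:=\sqrt{2}\,(\Re(\va)^\top,\Im(\va)^\top)^\top\in\R^{2n}$. We do not know that $\xu$ is isotropic in $\R^{2n}$, because $\E\va\va^\top$ need not vanish. So we instead compare quadratic forms. For $\x\in\mathbb{S}_{\C}^{n-1}$ we have
\begin{equation*}
\x^*\Big(\frac{1}{m}\sum_{k}\va_k\va_k^*-\pmb{I}_n\Big)\x=\frac1m\sum_k\big|\langle \va_k,\x\rangle\big|^2-1 ,
\end{equation*}
and $|\langle\va,\x\rangle|^2=(\Re\langle\va,\x\rangle)^2+(\Im\langle\va,\x\rangle)^2$. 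Each of $\Re\langle\va,\x\rangle$ and $\Im\langle\va,\x\rangle=\Re\langle\va,i\x\rangle$ is a real linear functional $\langle \pmb{v},\y\rangle$ of $\pmb{v}:=(\Re\va,\Im\va)\in\R^{2n}$, with $\y\in\mathbb{S}^{2n-1}$ determined by $\x$.

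The key step is to whiten $\pmb{v}$. Let $\pmb{\Sigma}:=\E\pmb{v}\pmb{v}^\top\in\R^{2n\times 2n}$. Isotropy of $\va$ gives $\E|\langle\va,\x\rangle|^2=1$, and this translates into $\y^\top\pmb{\Sigma}\y+\y'^\top\pmb{\Sigma}\y'=1$, where $\y,\y'$ are the orthogonal vectors corresponding to $\x$ and $i\x$. Hence $\lV\pmb{\Sigma}\rV_{op}\le 1$. In general, however, $\pmb{\Sigma}$ can be singular, for example when $\va$ is real.

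To avoid inverting $\pmb{\Sigma}$, symmetrize. Let $\theta$ be uniform on the circle, independent of everything, and take $\tilde{\va}:=e^{i\theta}\va$. This does not change $\va\va^*$, and $\E\tilde{\va}\tilde{\va}^\top=0$. Therefore $\tilde{\xu}:=\sqrt2(\Re\tilde{\va},\Im\tilde{\va})$ is genuinely isotropic in $\R^{2n}$. The computation is $\E\tilde{\xu}\tilde{\xu}^\top=\begin{pmatrix}\Re\pmb{I}&-\Im \pmb{I}\\ \Im\pmb{I}&\Re\pmb{I}\end{pmatrix}=\pmb{I}_{2n}$, using $\E\va\va^*=\pmb{I}_n$.

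Moreover, $\frac1m\sum_k\tilde{\xu}_k\tilde{\xu}_k^\top-\pmb{I}_{2n}$ is the real representation of $2\cdot\frac12(\frac1m\sum_k\va_k\va_k^*-\pmb{I}_n)$, so the complex operator norm equals the real operator norm of the $2n\times 2n$ matrix. More precisely, the real matrix $\frac1m\sum\tilde{\xu}_k\tilde{\xu}_k^\top$ equals the realification of $\frac1m\sum\va_k\va_k^*$ plus the realification of the conjugate-linear part $\frac1m\sum\tilde{\va}_k\tilde{\va}_k^\top$. Its quadratic form on the vector corresponding to $\x$ is $\frac1m\sum|\langle\va_k,\x\rangle|^2+\Re\big(\x^\top(\cdot)\x\big)$. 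Averaging over $\x$ and $i\x$ cancels the second term. This yields
\begin{equation*}
\Big|\frac1m\sum|\langle\va_k,\x\rangle|^2-1\Big|\le\lV\frac1m\sum_k\tilde{\xu}_k\tilde{\xu}_k^\top-\pmb{I}_{2n}\rV_{op}.
\end{equation*}

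The remaining steps check the hypotheses of Lemma~\ref{lm:covariance}. For the moment condition, take $\y=(\y_1,\y_2)\in\mathbb{S}^{2n-1}$. Then $\langle\tilde{\xu},\y\rangle=\sqrt2\,\Re\langle\tilde{\va},\x\rangle$ with $\x=\y_1+i\y_2\in\mathbb{S}^{n-1}_{\C}$, up to the conjugation convention. Hence $(\E|\langle\tilde{\xu},\y\rangle|^p)^{1/p}\le\sqrt2\,\tilde{\kappa}_p$, so $\kappa_p\le\sqrt2\tilde\kappa_p$. Also $\lV\tilde{\xu}_k\rV_{\ell_2}^2=2\lV\va_k\rV_{\ell_2}^2$. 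Applying Lemma~\ref{lm:covariance} in dimension $2n$, under $m\ge C_3\cdot 2n$, gives probability at least $1-e^{-2n}-c(p)/m$ and the bound $C_4(p)\big(2\tilde\kappa_p^2\sqrt{2n/m}+2\max_k\lV\va_k\rV^2_{\ell_2}/m\big)$. Absorbing constants into $\widetilde C(p)$ finishes the argument.

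The main obstacle is the isotropy of the real embedding. The random phase trick is the first thing I would try. The paper's intended route may instead directly verify that $\E\va\va^\top$ is harmless, but the phase randomization sidesteps this cleanly, since it leaves every $\va_k\va_k^*$ unchanged. The phases $\theta_k$ must be taken independent across $k$ so that the $\tilde{\xu}_k$ remain i.i.d.
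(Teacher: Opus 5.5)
Your proposal is correct and follows essentially the paper's route: you randomize the phase via $e^{\mathrm{i}\theta_k}\va_k$ with independent $\theta_k$, which makes the realification $\sqrt{2}(\Re\vb,\Im\vb)$ isotropic in $\R^{2n}$ with $\kappa_p\le\sqrt{2}\,\tilde{\kappa}_p$, and then apply Lemma~\ref{lm:covariance} in dimension $2n$. Your averaging of the quadratic form over $\x$ and $\mathrm{i}\x$ is the quadratic-form version of the paper's identity $\mathcal{R}(\vb\vb^*)=\widetilde{\vb}\widetilde{\vb}^{\top}+\widehat{\vb}\widehat{\vb}^{\top}$ combined with orthogonal invariance; only the inequality between the complex and real operator norms holds, not the equality you first assert, but the inequality is all that is needed.
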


\begin{proof}[Proof of Lemma~\ref{lm:chaos}]
Let $\theta$ be uniformly distributed on $[0,2\pi)$ and independent of $\va$, and define $\vb=e^{\mathrm{i}\theta}\va$.
Then $\vb\vb^*=\va\va^*$ and $\lV\vb\rV_{\ell_2}=\lV\va\rV_{\ell_2}$.
Thus, it suffices to prove the desired bound with $\va$ replaced by $\vb$.
Define the realifications of $\vb$ by
\begin{equation*}
\widetilde{\vb}:=
\begin{pmatrix}
\Re\lk\vb\rk\\
\Im\lk\vb\rk
\end{pmatrix}
\in\mathbb{R}^{2n},
\qquad
\widehat{\vb}:=
\begin{pmatrix}
-\Im\lk\vb\rk\\
\Re\lk\vb\rk
\end{pmatrix}
\in\mathbb{R}^{2n}.
\end{equation*}
Next, for any $\M=\M_1+\mathrm{i}\M_2\in\C^{n\times n}$, define its realification by
\begin{equation*}
\mathcal{R}\lk \M\rk
=
\begin{pmatrix}
\M_1 & -\M_2\\
\M_2 & \M_1
\end{pmatrix}\in\mathbb R^{2n\times 2n}.
\end{equation*}
It is well known that the realification preserves the operator norm, namely
$\lV\mathcal{R}\lk \M\rk\rV_{op}=\lV\M\rV_{op}$.

By the random phase construction, $\widetilde{\vb}\in\mathbb{R}^{2n}$ satisfies (see Appendix~\ref{ap:realification})
\begin{equation}\label{eq:isotropic}
\E\,\widetilde{\vb}\widetilde{\vb}^{\top}
=\frac{1}{2}\pmb{I}_{2n},
\qquad
\sup_{\x\in\mathbb{S}^{2n-1}}
\lk\E\lv\left\langle \widetilde{\vb},\x\right\rangle\rv^p\rk^{1/p}
\le \tilde{\kappa}_p.
\end{equation}
Let $\{\theta_k\}_{k=1}^m$ be independent copies of $\theta$, independent of $\{\va_k\}_{k=1}^m$, and set
$\vb_k=e^{\mathrm{i}\theta_k}\va_k$.
We apply the covariance estimate in Lemma~\ref{lm:covariance} to the isotropic random vector $\sqrt{2}\widetilde{\vb}$ in dimension $2n$. 
Therefore, provided $m\ge \widetilde{C}n$,
it follows that with probability at least $1-e^{-2n}-\frac{c\lk p\rk}{m}$, the following inequalities hold simultaneously:
\begin{equation*}
\left\{
\begin{aligned}
\lV
\frac{1}{m}\sum_{k=1}^m \widetilde{\vb}_k\widetilde{\vb}_k^\top-\frac{1}{2}\pmb{I}_{2n}
\rV_{op}
&\lesssim_p
\tilde{\kappa}_p^2\cdot\sqrt{\frac{2n}{m}}+\frac{\max_k\lV\widetilde{\vb}_k\rV_{\ell_2}^2}{m},\\[6pt]
\lV\frac{1}{m}\sum_{k=1}^m \widehat{\vb}_k\widehat{\vb}_k^\top-\frac{1}{2}\pmb{I}_{2n}\rV_{op}
&\lesssim_p
\tilde{\kappa}_p^2\cdot\sqrt{\frac{2n}{m}}+\frac{\max_k\lV\widehat{\vb}_k\rV_{\ell_2}^2}{m}.
\end{aligned}
\right.
\end{equation*}
Indeed, the second inequality follows from the first one since $\widehat{\vb}_k$ is obtained from $\widetilde{\vb}_k$ by an orthogonal transformation.

Moreover, a direct computation shows that (see Appendix~\ref{ap:Raa})
\begin{equation}\label{eq:Raa}
\mathcal{R}\lk \vb\vb^*\rk
=\widetilde{\vb}\widetilde{\vb}^{\top}+\widehat{\vb}\widehat{\vb}^{\top}.
\end{equation}
Consequently,
\begin{equation*}
\mathcal{R}\lk
\frac{1}{m}\sum_{k=1}^m \vb_k\vb_k^*-\pmb{I}_n\rk
=
\lk\frac{1}{m}\sum_{k=1}^m \widetilde{\vb}_k\widetilde{\vb}_k^\top-\frac{1}{2}\pmb{I}_{2n}\rk
+
\lk\frac{1}{m}\sum_{k=1}^m \widehat{\vb}_k\widehat{\vb}_k^\top-\frac{1}{2}\pmb{I}_{2n}\rk.
\end{equation*}
Using the identity $\lV\mathcal{R}\lk \M\rk\rV_{op}=\lV\M\rV_{op}$ together with the triangle inequality and the fact that $\lV\widetilde{\vb}_k\rV_{\ell_2}=\lV\widehat{\vb}_k\rV_{\ell_2}=\lV\vb_k\rV_{\ell_2}$, we obtain
\begin{equation*}
\lV\frac{1}{m}\sum_{k=1}^m \vb_k\vb_k^*-\pmb{I}_n\rV_{op}
\lesssim_p
\tilde{\kappa}_p^2\cdot\sqrt{\frac{n}{m}}+\frac{\max_k\lV\vb_k\rV_{\ell_2}^2}{m}.
\end{equation*}
Finally, since $\vb_k\vb_k^*=\va_k\va_k^*$ and $\lV\vb_k\rV_{\ell_2}=\lV\va_k\rV_{\ell_2}$ for all $k$, the desired estimate follows.
\end{proof}

\textbf{Step 2: Reduction to Expectation.}
Under the moment assumptions on the entries of $\va$, we have the following estimate
(see Appendix~\ref{ap:mu8}):
     \begin{equation}\label{eq:mu8}
      \sup_{\x\in\mathbb{S}_{\C}^{n-1}}\E \lv\lg\va,\x\rg\rv^p\lesssim_p \alpha_p.
    \end{equation}
    Let $\mathcal{E}$ denote the event, with probability at least $1-e^{-2n}-\frac{\tilde{c}\lk p\rk}{m}$, on which the inequality in Lemma~\ref{lm:chaos} holds.
    Therefore,
\begin{equation}\label{eq:splitbound}
\begin{aligned}
\E&\lV\frac{1}{m}\sum_{k=1}^{m} \va_k \va_k^{*} - \pmb{I}_n\rV_{op}\\
&\lesssim_p \alpha^{2/p}_p\sqrt{\frac{n}{m}}+
\frac{\E\max_k \lV\va_k\rV_{\ell_2}^{2}}{m} +
\E\lz\lV\frac{1}{m}\sum_{k=1}^{m} \va_k \va_k^{*} - \pmb{I}_n
\rV_{op}\mathbf{1}_{\mathcal{E}^{c}}\rz.
\end{aligned}
\end{equation}
By H\"older's inequality,
\begin{equation}\label{eq:holder}
\begin{aligned}
\E\left[\lV\frac{1}{m}\sum_{k=1}^{m} \va_k \va_k^{*} - \pmb{I}_n\rV_{op}
\mathbf{1}_{\mathcal{E}^{c}}\right]
&\le\lk\E\lV\frac{1}{m}\sum_{k=1}^{m} \va_k \va_k^{*} - \pmb{I}_n\rV_{op}^{2}\rk^{1/2}
\cdot\mathbb{P}\lk\mathcal{E}^c\rk^{1/2}\\
&\le\lk\E\lV\frac{1}{m}\sum_{k=1}^{m} \va_k \va_k^{*} - \pmb{I}_n\rV_{op}^{2}\rk^{1/2}
\cdot \lk e^{-2n}+\frac{\tilde{c}\lk p\rk}{m}\rk^{1/2}.
\end{aligned}
\end{equation}

\textbf{Step 3: Rosenthal's Inequality.} 
We now invoke Lemma~\ref{lm:heavymatrix}.
We set 
\begin{equation*}
\X_k=\frac{1}{m}\lk\va_k\va_k^{*}-\pmb{I}_n\rk, 
\qquad 
\pmb{V}_m^{2}=\sum_{k=1}^m \E\,\X_k^2.
\end{equation*}
Then
\begin{equation*}
M=\frac{\max_{k}\lV\va_k\va_k^{*}-\pmb{I}_n\rV_{op}}{m}\le\frac{\max_{k}\lV\va_k\rV_{\ell_2}^2+1}{m}.
\end{equation*}
By a direct calculation (see Appendix~\ref{ap:diag}), 
\begin{equation}\label{eq:diag}
\E\lV\va\rV_{\ell_2}^2\va\va^{*}=\lk n-1\rk\pmb{I}_n +\operatorname{diag}\lk \E\lz \lv a_1\rv^4\rz,\ldots,\E\lz \lv a_n\rv^4\rz \rk.
\end{equation}
Consequently,
\begin{equation*}
\sigma^2=\lV\sum_{k=1}^m \E\,\X_k^2\rV_{op}=\frac{1}{m^2}\lk\lV\sum_{k=1}^m\lk\E\lV\va_k\rV_{\ell_2}^2\va_k\va_k^{*}-\pmb{I}_n\rk\rV_{op}\rk\le \frac{\alpha_4+n-2}{m}\le \frac{\alpha_4 n}{m}.
\end{equation*}
We apply Lemma~\ref{lm:heavymatrix} with moment parameter $2$ and use $Q\lesssim\log\lk en\rk$.
This yields
\begin{equation*}
\begin{aligned}
&\lk\E\lV\frac{1}{m}\sum_{k=1}^{m} \va_k \va_k^{*} - \pmb{I}_n\rV_{op}^{2}\rk^{1/2}\\
&\qquad\qquad\lesssim 
\sqrt{\log \lk en\rk}\sqrt{\frac{\alpha_4 n}{m}}+
\frac{\log \lk en\rk}{m}
\E\max_k\lV\va_k\rV_{\ell_2}^{2}+
\frac{1}{m}\lk\E\max_k\lV\va_k\rV_{\ell_2}^{4}\rk^{1/2}.
\end{aligned}
\end{equation*}
Substituting the above inequality into~\eqref{eq:holder} and using $m\gtrsim\log^2 n$, we deduce that
\begin{equation}\label{eq: E bound}
\begin{aligned}
\E\lz\lV\frac{1}{m}\sum_{k=1}^{m} \va_k \va_k^{*} - \pmb{I}_n\rV_{op}
\mathbf 1_{\mathcal{E}^c}\rz\lesssim_p
\sqrt{\frac{\alpha_4 n}{m}}+
\frac{\E\max_k\lV\va_k\rV_{\ell_2}^{2}}{m}+
\frac{\lk\E\max_k\lV\va_k\rV_{\ell_2}^{4}\rk^{1/2}}{m\min\{m^{1/2},e^{n}\}}.
\end{aligned}
\end{equation}

\textbf{Step 4: Symmetrization Argument.}
 We now apply the symmetrization argument:
\begin{equation}\label{eq: E0 bound}
\begin{aligned}
\E\lV\frac{1}{m}\sum_{k=1}^{m} \varepsilon_k\va_k \va_k^{*} \rV_{op}
&\le
\E_{\va}\E_{\pmb{\varepsilon}}\lV\frac{1}{m}\sum_{k=1}^{m}\varepsilon_k\lk \va_k \va_k^{*} - \pmb{I}_n\rk\rV_{op}+\E_{\pmb{\varepsilon}}\lV\frac{1}{m}\sum_{k=1}^{m} \varepsilon_k\pmb{I}_n \rV_{op}\\
&\le
2\E_{\va}\lV\frac{1}{m}\sum_{k=1}^{m} \va_k \va_k^{*} - \pmb{I}_n\rV_{op}+\E_{\pmb{\varepsilon}}\lv\frac{1}{m}\sum_{k=1}^{m} \varepsilon_k\rv\\
&
\lesssim_p 
\lk \alpha_p^{2/p}+\alpha_4^{1/2}\rk\sqrt{\frac{ n}{m}}+
\frac{\E\max_k\lV\va_k\rV_{\ell_2}^{2}}{m}+
\frac{\lk\E\max_k\lV\va_k\rV_{\ell_2}^{4}\rk^{1/2}}{m\min\{m^{1/2},e^{n}\}}.
\end{aligned}
\end{equation}
The first line follows from the triangle inequality.
The second line follows from the symmetrization inequality; see Lemma~6.4.2 in~\cite{vershynin2018high}.
The final line follows from~\eqref{eq:splitbound} and~\eqref{eq: E bound}, together with the standard estimate
$\E\lv\sum_{k=1}^{m} \varepsilon_k\rv\lesssim\sqrt{m}$.

\textbf{Step 5: Bounding the Maximum Terms.}
We now bound the two maximal terms. 
By a direct calculation,
\begin{equation*}
\E\lV\va\rV_{\ell_2}^2=n,\quad\E\,\lV\va\rV_{\ell_2}^4\le\alpha_4 n+n\lk n-1\rk,\quad\text{and}\quad\operatorname{Var}\lk\lV\va\rV_{\ell_2}^2\rk\le\lk\alpha_4-1\rk n.
\end{equation*}
Therefore, 
\begin{equation*}
\begin{aligned}
\E\max_k\lV\va_k\rV_{\ell_2}^{2}&\le n+\E\max_k\lv\lV\va_k\rV_{\ell_2}^{2}-n\rv\le n+\E\lk\sum_{k=1}^m\lv\lV\va_k\rV_{\ell_2}^{2}-n\rv^2\rk^{1/2}\\
&\le n+\lk\E\sum_{k=1}^m\lv\lV\va_k\rV_{\ell_2}^{2}-n\rv^2\rk^{1/2}=n+\lk m \,\operatorname{Var}\lk\lV\va\rV_{\ell_2}^2\rk\rk^{1/2}\\
&\le n+\sqrt{\alpha_4 mn}.
\end{aligned}
\end{equation*}
Similarly,
 \begin{equation*}
\begin{aligned}
\lk\E\max_{1\le k\le m}\lV\va_k\rV_{\ell_2}^{4}\rk^{1/2}&=\lz\E\lk\max_{1\le k\le m}\lV\va_k\rV_{\ell_2}^{2}\rk^2\rz^{1/2}
\le n+\lk\E\max_{1\le k\le m}\lv\lV\va_k\rV_{\ell_2}^{2}-n\rv^2\rk^{1/2}\\
&\le n+\lk\E\sum_{k=1}^m\lv\lV\va_k\rV_{\ell_2}^{2}-n\rv^2\rk^{1/2}\le n+\sqrt{\alpha_4 mn}.
\end{aligned}
\end{equation*}
Combining the above two estimates with~\eqref{eq: E0 bound}, and using the fact that $\alpha_4^{1/4}\le\alpha_p^{1/p}$, finally we obtain
\begin{equation*}
\E\lV\frac{1}{m}\sum_{k=1}^{m} \varepsilon_k\va_k \va_k^{*} \rV_{op}
\lesssim_p
\alpha_p^{2/p}\sqrt{\frac{n}{m}}+
\frac{n}{m}.
\end{equation*}

\subsection{Proof of Theorem~\ref{thm:heavy1}}\label{subsec:proof1}

Now we are ready to prove Theorem~\ref{thm:heavy1}.
Recall that $\alpha_{4+\delta}=\max_i\E\lz\lv a_i\rv^{4+\delta}\rz$, $\beta=\min_i\E\lz\lv a_i\rv^4\rz>1$, $\gamma=\max_i\lv\E\lz a_i^2\rz\rv<1$ and define $\zeta=\min\{\beta-1,1-\gamma^2\}>0$.
By Lemma~\ref{lm:E^2} in Section~\ref{subsec:SB}, we have 
\begin{align*}
\E\,\lv\va^*\M\va\rv^2
&\ge \lk\operatorname{Tr} \M\rk^2
+ \lk\beta-1\rk\sum_{i=1}^n \M_{ii}^2+\lk1-\gamma^2\rk\sum_{i\ne j}\lv\M_{ij}\rv^2\\
&\ge\lk\operatorname{Tr} \M\rk^2+\zeta\cdot\lV\M\rV_F^2.
\end{align*}
Combining this estimate with Proposition~\ref{thm:4order} in Section~\ref{subsec:quad}
yields that for any nonzero $\M\in\mathcal{H}_n$
\begin{equation}\label{eq:2:4}
\frac{\lk\E\,\vert \va^{*}\M\va\vert^2\rk^{\frac{4+\delta}{\delta}}}{\lk\E\,\vert \va^{*}\M\va\vert^{2+\frac{\delta}{2}}\rk^{\frac{4}{\delta}}}\gtrsim_{\delta}\frac{\zeta^{\frac{4+\delta}{\delta}}}{\alpha_{4+\delta}^{4/\delta}}.
\end{equation}
Now by~\eqref{eq:PZ} in Section~\ref{subsec:SB}, we obtain 
\begin{equation}\label{eq:Q}
\begin{aligned}
\mathcal{Q}_{2\xi}
&:=\inf_{\M\in \mT_{\rho,r}}\mathbb{P}\lk\vert \va^{*}\M\va\vert\ge2\xi\rk\\
&\ge\inf_{\lV\M\rV_F=1}\mathbb{P}\lk\vert \va^{*}\M\va\vert^2\ge \frac{4\xi^2}{\zeta}\, \E\,\vert \va^{*}\M\va\vert^2\rk\\
&\ge\lk1-\frac{4\xi^2}{\zeta}\rk^{\frac{4+\delta}{\delta}}\inf_{\lV\M\rV_F=1}\frac{\lk\E\,\vert \va^{*}\M\va\vert^2\rk^{\frac{4+\delta}{\delta}}}{\lk\E\,\vert \va^{*}\M\va\vert^{2+\frac{\delta}{2}}\rk^{\frac{4}{\delta}}}\\
&\gtrsim_{\delta}\frac{\lk\zeta-4\xi^2\rk^{\frac{4+\delta}{\delta}}}{\alpha_{4+\delta}^{4/\delta}}.
\end{aligned}
\end{equation}
By~\eqref{eq:Wm} in Section~\ref{subsec:SB} and Theorem~\ref{thm:chaos} in Section~\ref{subsec:covariance},
provided that $m \gtrsim n$, we obtain
\begin{equation}\label{eq:W}
\mathcal{W}_m
\lesssim_{\delta} \sqrt{1+\lk1+\rho^{-1}\rk^{2}}\sqrt{r} \cdot\lk\alpha_{4+\delta}^{2/\lk4+\delta\rk}\sqrt{\frac{n}{m}}+\frac{n}{m}\rk.
\end{equation}

We choose $\xi=\frac{\zeta^{1/2}}{2\sqrt{2}}$ and $t=c_1\sqrt{m}\cdot g$ in Proposition~\ref{lm:small_ball} in Section~\ref{subsec:SB}.
Provided that
\begin{equation*}
m\gtrsim_{\delta} \rho^{-2}\cdot f \cdot rn,
\end{equation*} 
we obtain, with probability at least $1-e^{-c_2 m \cdot g^2}$
\begin{equation*}
\inf \left\{ \lV\mathcal{A}\lk \M\rk\rV_{\ell_q} : \M \in \mathcal{T}_{\rho,r} \right\} \gtrsim_{\delta} h \cdot m^{1/q},
 \end{equation*} 
where $c_1,c_2>0$ are sufficiently small absolute constants and $f,g,h$ are the constants defined in Theorem~\ref{thm:heavy1}.
Thus $\mathcal{A}$ satisfies the Frobenius-robust rank NSP in Lemma~\ref{lm:NSP} 
in Section~\ref{subsec:NSP} with constants $\rho$ and $\tau\lesssim_{\delta}\frac{1}{h \cdot m^{1/q}}$.
Finally, applying Proposition~\ref{lm:NSP0} in Section~\ref{subsec:NSP} and choosing $\rho=\frac{1}{2}$ completes the proof.

\subsection{Proof of Theorem~\ref{thm:heavy2}}\label{subsec:proof2}

In Section~\ref{subsec:proof1}, we established that $\mathcal{A}$ satisfies the Frobenius-robust rank NSP.
Therefore, by Proposition~\ref{lm:NSP+PSD} in Section~\ref{subsec:NSP}, in order to prove Theorem~\ref{thm:heavy2}, it remains to show that $\pmb{W}$ is positive definite and that its condition number is bounded.
To this end, we first establish the following fact.
We postpone the proof to Appendix~\ref{ap:max}.

\begin{fact}\label{lm:max}
Let $\va\in\C^n$ be a random vector with independent mean-zero, variance-one entries $\{a_i\}_{i=1}^n$. 
Assume that
$\alpha_p=\max_{i}\E\lz\lv a_i\rv^p\rz<\infty$ where $p\ge4$.
Let $\left\{\va_k\right\}_{k=1}^m$ be independent copies of $\va$.
Then with probability at least $1-\frac{1}{10m^{p/4-1}}$,
\begin{equation*}
\max_{k}\lV\va_k\rV_{\ell_2}^2\le  2n+C\lk p\rk\alpha_p^{2/p}\sqrt{mn}.
\end{equation*}
Here, $C\lk p\rk> 0$ is a sufficiently large constant depending only on $p$.
\end{fact}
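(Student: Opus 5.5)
We will prove Fact~\ref{lm:max} with a union bound over $k$, controlling each $\lV\va_k\rV_{\ell_2}^2$ through a $p/2$-th moment bound on the centered variable $\lV\va\rV_{\ell_2}^2-n=\sum_{i=1}^n\lk\lv a_i\rv^2-1\rk$.

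Set $Y_i=\lv a_i\rv^2-1$. These are independent, mean-zero, and satisfy $\E Y_i^2\le\alpha_4\le\alpha_p^{4/p}$ and $\E\lv Y_i\rv^{p/2}\lesssim_p\alpha_p$. Since $p/2\ge2$, Rosenthal's inequality~\eqref{eq:rosen} with exponent $p/2$ gives
\begin{equation*}
\E\lv\lV\va\rV_{\ell_2}^2-n\rv^{p/2}\lesssim_p\lk n\alpha_p^{4/p}\rk^{p/4}+n\alpha_p .
\end{equation*}
For $p\ge4$ we have $n\le n^{p/4}$, so the right-hand side is at most $C_p\,\alpha_p\, n^{p/4}$. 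Here we used $\lk\alpha_p^{4/p}\rk^{p/4}=\alpha_p$ and $\alpha_p\ge1$.

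Markov's inequality then gives, for any $s>0$,
\begin{equation*}
\mathbb P\lk\lV\va\rV_{\ell_2}^2-n\ge s\rk\le\frac{C_p\,\alpha_p\,n^{p/4}}{s^{p/2}}.
\end{equation*}
Choose $s=C(p)\alpha_p^{2/p}\sqrt{mn}$. Then $s^{p/2}=C(p)^{p/2}\alpha_p\,(mn)^{p/4}$, so the bound becomes $C_p/\lk C(p)^{p/2}m^{p/4}\rk$. A union bound over the $m$ indices gives total failure probability at most $C_p\,m^{1-p/4}/C(p)^{p/2}$. Taking $C(p)$ large makes this at most $\frac{1}{10m^{p/4-1}}$. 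On the complementary event,
\begin{equation*}
\max_k\lV\va_k\rV_{\ell_2}^2\le n+C(p)\alpha_p^{2/p}\sqrt{mn}\le 2n+C(p)\alpha_p^{2/p}\sqrt{mn}.
\end{equation*}

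The only delicate point is the bookkeeping in the Rosenthal step. One must check that the $\sum_i\E\lv Y_i\rv^{p/2}$ term, of order $n\alpha_p$, is dominated by $\alpha_p n^{p/4}$, which is exactly where $p\ge4$ is needed. One must also check that $\E\lv\lv a\rv^2-1\rv^{p/2}\lesssim_p\E\lv a\rv^p+1\lesssim\alpha_p$. The slack $2n$ in place of $n$ is not needed.
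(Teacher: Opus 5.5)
Your proof is correct and follows the paper's argument essentially verbatim: Rosenthal's inequality at exponent $p/2$ applied to $\sum_i(\lv a_i\rv^2-1)$ with the bound $\alpha_p n^{p/4}$, then Markov and a union bound over $k$. The only cosmetic difference is in the thresholding. The paper thresholds $\lV\va\rV_{\ell_2}^2$ at $u\ge 2n$ and uses $u-n\ge u/2$, whereas you threshold the deviation directly, which, as you note, shows the $2n$ can be improved to $n$.
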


Now, set $p=4+\delta$.
By Lemma~\ref{lm:chaos}, Equation~\eqref{eq:mu8} in Section~\ref{subsec:proofchaos}, and Fact~\ref{lm:max} above, we have, with probability at least $1-e^{-2n}-\frac{1}{10m^{\delta/4}}-\frac{\tilde{c}\lk \delta \rk}{m}$, that
\begin{equation}
1-C_1\lk \delta\rk \lk\alpha_{4+\delta}^{2/\lk4+\delta\rk}\sqrt{\frac{n}{m}}+\frac{n}{m}\rk\le \lambda_{\min}\lk\pmb{W}\rk\le \lambda_{\max}\lk\pmb{W}\rk
\le
1+C_1\lk \delta\rk \lk\alpha_{4+\delta}^{2/\lk4+\delta\rk}\sqrt{\frac{n}{m}}+\frac{n}{m}\rk,
\end{equation}
provided $m\gtrsim n$.
Consequently, if $m\gtrsim_\delta\alpha_{4+\delta}^{4/\lk4+\delta\rk} n$, then
\begin{equation*}
\max\left\{\lV\pmb{W}\rV_{op},\lV\pmb{W}^{-1}\rV_{op},\lV\pmb{W}\rV_{op}\cdot{\lV\pmb{W}^{-1}\rV_{op}}\right\}\le C_0.
\end{equation*}
Here, $C_0> 0$ denotes an absolute constant.
Finally, intersecting this event with the rank-NSP event from Section~\ref{subsec:proof1}, applying Proposition~\ref{lm:NSP+PSD}, and choosing $\rho=\frac{1}{2C_0}$ completes the proof.

\section{Complex Projective \texorpdfstring{$t$}{t}-Design}\label{sec:tdesign}

In this section, we consider sampling matrices formed by taking outer products of vectors drawn independently from a complex projective $t$-design~\cite{gross2015partial,kueng2017low,kabanava2016stable,gilles2025stable}. 
Such designs are finite sets of unit vectors in $\C^n$ that reproduce the low-order moment structure of Haar-random vectors, and therefore serve as a versatile tool for partially derandomizing recovery results. 
This is especially important in low-rank matrix recovery, and in particular in quantum state tomography~\cite{gross2010quantum}, where $t$-designs provide structured sampling ensembles that retain the theoretical advantages of fully random measurements while being closely connected to physically realizable implementations, such as random quantum circuits.

\begin{definition}\label{def:tdesign}
Let $\left\{\vw_1,\ldots,\vw_N\right\}\subseteq \C^n$ be a collection of unit vectors with corresponding weights
$\left\{p_1,\ldots,p_N\right\}\subseteq [0,1]$ satisfying $\sum_{i=1}^N p_i = 1$.
We say that the weighted set $\left\{p_i,\vw_i\right\}_{i=1}^N$ forms a weighted complex projective $t$-design if
\begin{equation}\label{eq:tdesign}
\sum_{i=1}^N p_i \lk \vw_i\vw_i^* \rk^{\otimes t}
=\int_{\mathbb{S}^{n-1}_{\C}} (\vw\vw^*)^{\otimes t}\,d\mu(\vw).
\end{equation}
Here, $\mu$ denotes the normalized uniform measure on $\mathbb{S}^{n-1}_{\C}$.
\end{definition}

We present the following theorem, which states that low-rank matrix recovery from complex projective $4$-design sampling can be achieved with the optimal sample complexity $m=\mathcal{O}\lk rn\rk$.
This removes the extra logarithmic factor in the sample complexity bounds of~\cite{kueng2017low,kabanava2016stable}, where $m=\mathcal{O}\lk rn\log n\rk$ is required.
 
\begin{theorem}\label{thm:4design}
Let $\{p_i,\vw_i\}_{i=1}^N$ be a weighted complex projective $4$-design, and define $\widetilde{\vw}_i=\sqrt[4]{n(n+1)}\,\vw_i$.
Consider the noisy measurement process in~\eqref{eq:sample} with $m$ sampling matrices $\left\{\va_k\va_k^*\right\}_{k=1}^m$, where $\left\{\va_k\right\}_{k=1}^m$ are independent copies of a random vector 
$\va \in \C^n$ drawn from $\{p_i,\widetilde{\vw}_i\}_{i=1}^N$.
Fix $r \le n$, and suppose that $m \ge C_1  rn$ for a sufficiently large constant $C_1>0$.
Then the following statements hold.
\begin{itemize}
        \item[(a)] If $\lV\pmb{\omega}\rV_{\ell_q} \le \eta$, then with probability at least $1-e^{-C_2 m}$, it holds that for all $\M_0 \in \mathcal{H}_n$, any solution $\M^\sharp$ to the program~\eqref{nuclear} obeys
\begin{equation*}
\lV \M_0 - \M^\sharp\rV_F \le 
\frac{C_3}{\sqrt{r}} \lV\M_0^{r,c}\rV_{*} 
+ C_4\frac{\eta}{m^{1/q}}.
\end{equation*}
    \item[(b)]
With probability at least $1-e^{-C_5 m}-e^{-2n}-\frac{C_6}{m}$, it holds that for all $\M_0 \succeq \pmb{0}$, any solution $\M^\sharp$ to the program~\eqref{PSD} obeys
\begin{equation*}
\lV \M_0 - \M^\sharp\rV_F \le 
\frac{C_7}{\sqrt{r}} \lV\M_0^{r,c}\rV_{*} 
+ C_8\frac{\lV\pmb{\omega}\rV_{\ell_q}}{m^{1/q}}.
\end{equation*}
\end{itemize}
Here, $C_1,\dots,C_8$ are positive universal constants.
\end{theorem}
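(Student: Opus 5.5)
Theorem~\ref{thm:4design} can be proved along the same route as Theorems~\ref{thm:heavy1} and~\ref{thm:heavy2}: establish the Frobenius-robust rank NSP through Lemma~\ref{lm:NSP} and Proposition~\ref{lm:small_ball}, then conclude with Proposition~\ref{lm:NSP0} for part (a), and with Proposition~\ref{lm:NSP+PSD} together with a condition-number bound on $\pmb{W}$ for part (b). The entries of $\va$ are no longer independent, so Proposition~\ref{thm:4order} and Theorem~\ref{thm:chaos} cannot be quoted directly. The $4$-design property takes their place, because every quantity we need depends only on moments of order at most four of $\vw\vw^*$, and these equal the Haar moments. In particular the $p$ in the generalized Paley--Zygmund inequality will be taken to be $p=4$ (i.e.\ $q=2$, $s=1$ in Fact~\ref{fact:PZ}).

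\textbf{Small ball function.} Using $\int(\vw\vw^*)^{\otimes 2}d\mu=\frac{2}{n(n+1)}P_{\mathrm{sym}}$ and the normalization $\widetilde{\vw}=\sqrt[4]{n(n+1)}\vw$, I obtain
\[
\E\lv\va^*\M\va\rv^2=(\operatorname{Tr}\M)^2+\lV\M\rV_F^2\ge\lV\M\rV_F^2 .
\]
For the fourth moment, the $4$-design identity expresses $\E(\va^*\M\va)^4$ as $\frac{(n(n+1))^2}{n(n+1)(n+2)(n+3)}\sum_{\pi\in S_4}\operatorname{Tr}_\pi(\M)$, where $\operatorname{Tr}_\pi$ is the product of traces of powers of $\M$ over the cycles of $\pi$. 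Each such term is bounded by $C\big((\operatorname{Tr}\M)^2+\lV\M\rV_F^2\big)^2$, using $\lv\operatorname{Tr}\M^3\rv\le\lV\M\rV_F^3$ and $\operatorname{Tr}\M^4\le\lV\M\rV_F^4$, and the prefactor is $\le1$. Hence $\E\lv\va^*\M\va\rv^4\lesssim(\E\lv\va^*\M\va\rv^2)^2$. Fact~\ref{fact:PZ} then gives $\mathcal{Q}_{2\xi}\gtrsim1$ for $\xi$ a small absolute constant, uniformly over $\lV\M\rV_F=1$.

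\textbf{Empirical width.} By \eqref{eq:Wm} it suffices to show $\E\lV\pmb{H}\rV_{op}\lesssim\sqrt{n/m}$ when $m\gtrsim n$, and here I reuse the proof of Theorem~\ref{thm:chaos}. The vector $\va$ is isotropic, since by the $1$-design property $\E\va\va^*=\sqrt{n(n+1)}\,\frac1n\pmb I_n\approx\pmb I_n$ (I will rescale by an absolute constant if needed). The design gives $\E\lv\lg\va,\x\rg\rv^4=2n(n+1)/(n(n+1))=2$, but Lemma~\ref{lm:covariance} needs some $p>4$. Boundedness supplies this: $\lV\va\rV_{\ell_2}^2=\sqrt{n(n+1)}\le 2n$ deterministically, so $\lv\lg\va,\x\rg\rv^2\le 2n$ and the $4$-design moments control all higher moments only crudely. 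This is the step I expect to be the main obstacle. Obtaining a uniform $L_p$ bound for some $p>4$ would require a $t$-design with $t>4$, so I will not use Lemma~\ref{lm:covariance} in its heavy-tailed form. Instead I will apply Lemma~\ref{lm:heavymatrix} directly to $\X_k=\frac1m\varepsilon_k\va_k\va_k^*$, where $M\le 2n/m$ deterministically and $\sigma^2=\lV\frac1m\E\lV\va\rV^2\va\va^*\rV_{op}\lesssim n/m$. That bound carries a $\sqrt{\log n}$ factor, so to remove it I will try the Tikhomirov/Jirak argument for bounded isotropic vectors with $\lV\va\rV_{\ell_2}^2\lesssim n$ and fourth marginal moments bounded by $2$. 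The target is $\lV\frac1m\sum\va_k\va_k^*-\pmb I\rV_{op}\lesssim\sqrt{n/m}+\max_k\lV\va_k\rV^2/m\lesssim\sqrt{n/m}$ with probability $1-e^{-2n}-C/m$. I will check whether the arguments of \cite{tikhomirov2018sample,jirak2025concentration} actually need $p>4$ only to control $\max_k\lV\va_k\rV^2$ and weak tails, both of which are trivial here because $\lV\va\rV_{\ell_2}^2\le 2n$. Given this bound, Steps 2--4 in the proof of Theorem~\ref{thm:chaos} go through with the maximum terms equal to $n/m$.

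\textbf{Conclusion.} Plugging into Proposition~\ref{lm:small_ball} with $\xi,t\asymp\sqrt m$ chosen as absolute constants times their natural scales and $m\ge C_1rn$ yields $\inf_{\mT_{\rho,r}}\lV\mA(\M)\rV_{\ell_q}\gtrsim m^{1/q}$ with probability $1-e^{-C_2m}$. Part (a) then follows from Lemma~\ref{lm:NSP} and Proposition~\ref{lm:NSP0} with $\rho=1/2$. For part (b), the covariance bound of the previous paragraph gives $\kappa(\pmb W)\le C_0$ once $m\gtrsim n$, with probability $1-e^{-2n}-C_6/m$. I then take $\rho=1/(2C_0)$ and apply Proposition~\ref{lm:NSP+PSD}.
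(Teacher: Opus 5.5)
There is a genuine gap in the empirical-width step, and it comes from a false premise. Your small-ball estimate is fine. Computing $\mathbb{E}|\va^*\M\va|^2=(\operatorname{Tr}\M)^2+\|\M\|_F^2$ and $\mathbb{E}|\va^*\M\va|^4\le 24\,(\mathbb{E}|\va^*\M\va|^2)^2$ from the $4$-design identity, and then applying Paley--Zygmund, reproduces exactly the bound $\mathcal{Q}_{2\xi}\ge(1-4\xi^2)^2/24$ that the paper imports from \cite[Proposition~12]{kueng2017low}.

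The false premise is your claim that a uniform $L_p$ bound on $\langle\va,\x\rangle$ for some $p>4$ would need a $t$-design with $t>4$. Since $|\langle\vw,\x\rangle|^{2t}=\langle(\vw\vw^*)^{\otimes t},(\x\x^*)^{\otimes t}\rangle$, a $t$-design reproduces the Haar moments of $|\langle\vw,\x\rangle|$ up to order $2t$, not $t$.

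\begin{itemize}
\item The $3$-design property alone gives $\mathbb{E}|\langle\va,\x\rangle|^6=\frac{6\,(n(n+1))^{3/2}}{n(n+1)(n+2)}\le 6$, while $\mathbb{E}|\langle\va,\x\rangle|^2=\sqrt{(n+1)/n}\ge 1$.
\item Your own fourth-moment computation with $\M=\x\x^*$ even gives a bounded eighth moment.
\item This $L_6$--$L_2$ equivalence lets you apply Lemma~\ref{lm:chaos} with $p=6$ to the isotropic rescaling $((n+1)/n)^{-1/4}\va$. The random phase in its proof takes care of the lack of centering.
\item Combined with the deterministic identity $\max_k\|\va_k\|_{\ell_2}^2=\sqrt{n(n+1)}$, this gives $\|\frac1m\sum_k\va_k\va_k^*-\sqrt{(n+1)/n}\,\pmb{I}_n\|_{op}\lesssim\sqrt{n/m}$ with probability at least $1-e^{-2n}-c/m$ once $m\gtrsim n$.
\item Steps 2--4 of the proof of Theorem~\ref{thm:chaos} then yield $\mathbb{E}\|\pmb{H}\|_{op}\lesssim\sqrt{n/m}+n/m$. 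This is the paper's Lemma~\ref{lm:3design}.
\end{itemize}

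As written, your proposal does not reach this bound. Your fallback, applying Lemma~\ref{lm:heavymatrix} directly to $\frac1m\varepsilon_k\va_k\va_k^*$, only gives $\mathbb{E}\|\pmb{H}\|_{op}\lesssim\sqrt{n\log n/m}+n\log n/m$. That forces $m\gtrsim rn\log n$, which is exactly the earlier bound the theorem is meant to improve. Your plan to redo the Tikhomirov/Jirak argument for bounded vectors with only fourth-moment control is left as something you ``will check,'' so it does not close the step.

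Part (b) depends on the same missing estimate. You need $\kappa(\pmb W)\le C_0$ with probability $1-e^{-2n}-C_6/m$ at $m\gtrsim n$, and a matrix Chernoff bound for the bounded vectors $\va_k$ would again cost a $\log n$ factor. Once the sixth-moment bound is inserted, the rest of your outline matches the paper's proof: constant $\xi$, $t\asymp\sqrt m$, $\rho=1/2$ with Proposition~\ref{lm:NSP0} for (a), and $\rho=1/(2C_0)$ with Proposition~\ref{lm:NSP+PSD} for (b).
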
 

The following lemma is a key ingredient in the proof of the above theorem.

\begin{lemma}\label{lm:3design}
Let $\{p_i,\vw_i\}_{i=1}^N$ be a weighted complex projective $3$-design, and set
$\widetilde{\vw}_i=\sqrt[4]{n(n+1)}\,\vw_i$.
Let $\left\{\va_k\right\}_{k=1}^m$ be independent copies of a random vector 
$\va \in \C^n$ drawn from $\{p_i,\widetilde{\vw}_i\}_{i=1}^N$.
Let $\{\varepsilon_{k}\}_{k=1}^{m}$ be a Rademacher sequence independent of $\left\{\va_k\right\}_{k=1}^m$. 
If $m\ge C_9 n$ for a sufficiently large constant $C_9 > 0$,
then
    \begin{equation}\label{eq:3design}
            \E\lV\frac{1}{m}\sum_{k=1}^m\varepsilon_k\va_k\va_{k}^{*}\rV_{op}
            \le C_{10}\lk\sqrt{\frac{n}{m}}+\frac{n}{m}\rk,
            \end{equation}
           where $C_{10}$ is a positive universal constant.
\end{lemma}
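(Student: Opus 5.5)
We follow the same scheme as the proof of Theorem~\ref{thm:chaos}: first a high-probability covariance estimate, then an expectation bound, then symmetrization. The new work is to check the hypotheses of Lemma~\ref{lm:chaos} for the $3$-design ensemble. Normalization requires care here. Since $\E\,\vw\vw^*=\frac1n\pmb{I}_n$ for a $1$-design, we get $\E\,\va\va^*=\sqrt{n(n+1)}/n\cdot \pmb{I}_n$, which equals $\pmb{I}_n$ only up to the factor $\sqrt{(n+1)/n}\in[1,\sqrt2]$. We therefore apply Lemma~\ref{lm:chaos} to the isotropic vector $\pmb{u}:=(n/(n+1))^{1/4}\va=\sqrt n\,\vw$, and rescale by a bounded factor at the end.

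\textbf{Step 1: Moment bound (the $3$-design property).} We need $\sup_{\x\in\mathbb{S}^{n-1}_\C}\lk\E\lv\lg \pmb{u},\x\rg\rv^p\rk^{1/p}\lesssim 1$ for some $p>4$. The natural choice is $p=6$, since a $3$-design reproduces Haar moments up to order $3$ in $\vw\vw^*$:
\[
\E\lv\lg \vw,\x\rg\rv^6=\int \lv\lg \vw,\x\rg\rv^6 d\mu(\vw)=\frac{3!}{n(n+1)(n+2)}.
\]
Hence $\E\lv\lg \pmb{u},\x\rg\rv^6=6n^3/(n(n+1)(n+2))\le 6$, and $\tilde\kappa_6\lesssim1$ uniformly in $n$. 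The same computation with $\E(\vw\vw^*)^{\otimes 2}$ gives the variance proxy $\lV\E\lV\pmb{u}\rV_{\ell_2}^2\pmb{u}\pmb{u}^*\rV_{op}=n$.

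\textbf{Step 2: Deterministic norm.} This step is where the design case is simpler than the heavy-tailed one. Because $\lV\va_k\rV_{\ell_2}^2=\sqrt{n(n+1)}\le 2n$ deterministically, every maximum term in Theorem~\ref{thm:chaos} (both $\E\max_k\lV\va_k\rV^2_{\ell_2}$ and $(\E\max_k\lV\va_k\rV^4_{\ell_2})^{1/2}$) is at most $2n$. No analogue of Step~5 or Fact~\ref{lm:max} is needed.

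\textbf{Step 3: Expectation bound and symmetrization.} On the event $\mathcal{E}$ of Lemma~\ref{lm:chaos}, which has probability at least $1-e^{-2n}-\tilde c/m$, we have $\lV\frac1m\sum_k\pmb{u}_k\pmb{u}_k^*-\pmb{I}_n\rV_{op}\lesssim\sqrt{n/m}+n/m$. To bound the contribution of $\mathcal{E}^c$, we use Hölder's inequality as in~\eqref{eq:holder}, together with Lemma~\ref{lm:heavymatrix} at moment $2$. This gives $\sigma^2\le n/m$ and $M\le (2n+1)/m$, hence
\[
\lk\E\lV\cdot\rV_{op}^2\rk^{1/2}\lesssim \sqrt{\log(en)\,n/m}+\log(en)\,n/m.
\]
Multiplying by $(e^{-2n}+\tilde c/m)^{1/2}$ and using $m\ge C_9 n$ absorbs the logarithms, since $\sqrt{\log(en)/m}\lesssim 1$ and $e^{-n}\log(en)\lesssim 1$. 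The symmetrization step~\eqref{eq: E0 bound} is then unchanged. The leftover term $\E\lv\frac1m\sum_k\varepsilon_k\rv\lesssim m^{-1/2}\le\sqrt{n/m}$ is harmless, and undoing the rescaling by $(1+1/n)^{1/4}\le 2^{1/4}$ yields~\eqref{eq:3design}.

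\textbf{Main obstacle.} Everything is routine except Step~1: Lemma~\ref{lm:covariance} needs a moment strictly above $4$, and a $3$-design supplies exactly the sixth moment. We must therefore verify carefully that the Haar integral, scaled by $n^3$, remains bounded by a universal constant, and that Lemma~\ref{lm:chaos} genuinely requires only this directional moment bound and no independence of coordinates.
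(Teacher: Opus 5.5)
Your proposal is correct and follows essentially the same route as the paper. You verify the $L_6$--$L_2$ directional moment bound from the $3$-design property and apply Lemma~\ref{lm:chaos} with $p=6$, using the deterministic norm $\lV\va_k\rV_{\ell_2}^2=\sqrt{n(n+1)}$; you then pass to an expectation bound via H\"older, Lemma~\ref{lm:heavymatrix}, and symmetrization exactly as in Theorem~\ref{thm:chaos}. One small correction, which is still harmless: since $\va\va^*=\sqrt{(n+1)/n}\,\pmb{u}\pmb{u}^*$, the final rescaling factor is $\sqrt{(n+1)/n}\le\sqrt{2}$ rather than $2^{1/4}$ (and, as the paper notes, the possible non-centeredness of $\va$ is absorbed by the random phase inside Lemma~\ref{lm:chaos}).
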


\begin{proof}[Proof of Lemma~\ref{lm:3design}]
By the definition of a weighted complex projective $3$-design, for any 
$\x\in\mathbb{S}_\mathbb{C}^{n-1}$, we have
\begin{equation*}
\begin{aligned}
\E\lv\langle\va,\x\rangle\rv^6
&=\lk\lk n+1\rk n\rk^{3/2}\E\lv\langle\vw,\x\rangle\rv^6\\
&=\lk\lk n+1\rk n\rk^{3/2}
\int_{\mathbb{S}^{n-1}_{\C}} \lv\langle\vw,\x\rangle\rv^6\,d\mu(\vw)
\\
&=\frac{3! \lk\lk n+1\rk n\rk^{3/2}}{n(n+1)(n+2)}\le 6.
\end{aligned}
\end{equation*}
Similarly, for any 
$\x\in\mathbb{S}_\mathbb{C}^{n-1}$,
$\E\lv\langle\va,\x\rangle\rv^2
=\sqrt{\frac{n+1}{n}}\ge 1$.
Consequently, the complex projective $3$-design ensemble satisfies the $L_6$--$L_2$ condition
\begin{equation*}
\lk\E\lv\langle\va,\x\rangle\rv^6\rk^{1/6}
\le 6^{1/6}\lk\E\lv\langle\va,\x\rangle\rv^2\rk^{1/2},
\qquad \forall\,\x\in\C^{n}.
\end{equation*}
After normalization, we apply Lemma~\ref{lm:chaos} in Section~\ref{subsec:proofchaos} with $p=6$ (although $\va$ is not centered, the random phase construction used in Section~\ref{subsec:proofchaos} allows us to replace it by the centered vector $e^{\mathrm{i}\theta}\va$ without changing the sampling matrix $\va\va^*$). 
We obtain that if $m\gtrsim n$, then with probability at least $1-e^{-2n}-\frac{c_1}{m}$,
\begin{equation}\label{eq:tdesign0}
\lV\frac{1}{m}\sum_{k=1}^m\va_k\va_{k}^{*}-\sqrt{\frac{n+1}{n}}\pmb{I}_n\rV_{op}
\lesssim
\sqrt{\frac{n}{m}}+\frac{\max_{k}\lV\va_k\rV_{\ell_2}^2}{m}\lesssim \sqrt{\frac{n}{m}}+\frac{n}{m}.
\end{equation}
Here, we used the identities 
\begin{equation*}
\E\,\va\va^{*}
=\sqrt{\frac{n+1}{n}}\pmb{I}_n
\quad \text{and} \quad
\max_k \lV\va_k\rV_{\ell_2}^2
=\lV\va\rV_{\ell_2}^2=\sqrt{\lk n+1\rk n}.
\end{equation*}
By the same argument as in the proof of Theorem~\ref{thm:chaos} in Section~\ref{subsec:covariance}, using~\eqref{eq:tdesign0} together with Lemma~\ref{lm:heavymatrix} in Section~\ref{subsec:covariance}, we obtain the desired estimate~\eqref{eq:3design} whenever $m\gtrsim n$.
\end{proof}

We are now ready to prove Theorem~\ref{thm:4design}.
\begin{proof}[Proof of Theorem~\ref{thm:4design}]
We first prove part~(a) of Theorem~\ref{thm:4design}.
As in the proof of Theorem~\ref{thm:heavy1}, it suffices to establish a uniform lower bound on $\lV\mathcal{A}\lk\M\rk\rV_{\ell_q}$ over $\mathcal{T}_{\rho,r}$ by using Proposition~\ref{lm:small_ball} in Section~\ref{subsec:SB}; Lemma~\ref{lm:NSP} in Section~\ref{subsec:NSP} then yields the Frobenius-robust rank NSP.
By~\cite[Proposition~12]{kueng2017low}, for every $\xi \in [0, 1/2]$,
\begin{equation}\label{eq:Q4}
\mathcal{Q}_{2\xi}:=\inf_{\M \in \mathcal{T}_{\rho, r}} \mathbb{P} \lk\lv\lg\va\va^*,\M\rg\rv \ge 2\xi\rk \ge \inf_{\lV \M\rV_F=1} \mathbb{P} \lk\lv\lg\va\va^*,\M\rg\rv \ge 2\xi\rk \ge \frac{\lk 1-4\xi^2\rk^2}{24}.
\end{equation}
Moreover, by~\eqref{eq:Wm} in Section~\ref{subsec:SB} and Lemma~\ref{lm:3design}, provided that $m\gtrsim n$,
\begin{equation*}
\mathcal{W}_m:=\E\sup_{\M\in \mT_{\rho,r}}\lv\left\lg\pmb{H},\M\right\rg\rv
\lesssim \sqrt{1+\lk1+\rho^{-1}\rk^{2}} \cdot\sqrt{\frac{rn}{m}},
\end{equation*}
where $\pmb{H} = \frac{1}{m}\sum_{k=1}^m \varepsilon_k \va_k \va_k^*$. 
Finally, choosing $\xi=1/4, t=c_2\sqrt{m}$ in Proposition~\ref{lm:small_ball}, we obtain, provided that $m\gtrsim \rho^{-2}rn$, that $\mathcal{A}$ satisfies the Frobenius-robust rank NSP with constants $\rho$ and $\tau\lesssim m^{-1/q}$ with probability at least $1-e^{-c_3m}$.
Taking $\rho=\frac{1}{2}$ and applying Proposition~\ref{lm:NSP0} in Section~\ref{subsec:NSP} proves part~(a).

We next prove part~(b). 
Following the proof of Theorem~\ref{thm:heavy2}, it remains, by Proposition~\ref{lm:NSP+PSD} in Section~\ref{subsec:NSP}, to show that $\pmb{W}=\frac{1}{m}\sum_{k=1}^m\va_k\va_k^{*}$ is positive definite and has bounded condition number.
By~\eqref{eq:tdesign0}, provided that $m\gtrsim n$,  with probability at least $1-e^{-2n}-\frac{c_1}{m}$,
\begin{equation*}
\sqrt{\frac{n+1}{n}}-C_1 \sqrt{\frac{n}{m}}\le \lambda_{\min}\lk\pmb{W}\rk\le \lambda_{\max}\lk\pmb{W}\rk
\le
\sqrt{\frac{n+1}{n}}+C_1 \sqrt{\frac{n}{m}}.
\end{equation*}
Hence, if  $m\ge \widetilde{C} n$ for a sufficiently large constant $\widetilde{C}>0$, then $\pmb{W}$ is positive definite and
\begin{equation*}
\max\left\{\lV\pmb{W}\rV_{op},\lV\pmb{W}^{-1}\rV_{op},\lV\pmb{W}\rV_{op}\cdot{\lV\pmb{W}^{-1}\rV_{op}}\right\}\le C_0.
\end{equation*}
Intersecting this event with the rank-NSP event obtained above, choosing $\rho=\frac{1}{2C_0}$, and applying Proposition~\ref{lm:NSP+PSD} proves part~(b).
\end{proof}

\begin{remark}
Recovery guarantees for approximate complex projective $4$-designs have also been established in~\cite{kueng2017low,kabanava2016stable} under suitable assumptions, with sample complexity $m=\mathcal{O}\lk rn\log n\rk$; see~\cite[Section 3.1]{kueng2017low} for the definition of approximate complex projective $t$-designs. 
We expect that our method can be extended to this setting, yielding analogous guarantees with the optimal sample complexity $m=\mathcal{O}\lk rn\rk$. 
The main obstacle to extending our approach to this setting is that the ingredient in our proof, namely Lemma~\ref{lm:chaos}, are used in a form that relies on the exact moment structure of complex projective $4$-designs. 
One possible way to overcome this difficulty is to extend Lemma~\ref{lm:chaos} to complex random vectors with a general covariance matrix.
We leave this direction for future work.
\end{remark}  

\begin{remark}
The preceding argument can also be adapted to complex projective $3$-design ensembles. 
Recent work~\cite{gilles2025stable} established recovery guarantees in this setting with sample complexity $m=\mathcal{O}\lk r^3 n\log n\rk$. 
More precisely, by following the preceding argument with the small ball estimate~\eqref{eq:Q4} replaced by~\cite[Lemma~3.2.1]{gilles2025stable}, one can improve the sample complexity to $m=\mathcal{O}\lk r^3 n\rk$. 
We omit the details here.
\end{remark}

\section{Stability of Phase Retrieval}\label{sec:stability}

Phase retrieval refers to the problem of reconstructing an unknown signal $\x_{0}\in\C^{n}$ from $m$ phaseless measurements, 
given in the form of intensities $\left\{\lv\lg\va_{k},\x_0 \rg\rv^2\right\}_{k=1}^m$,
where $\Omega:=\left\{ \va_{k} \right\}_{k=1}^{m}\subseteq\C^n$ denotes the known collection of sampling vectors.
In practical applications, ensuring robust reconstruction performance is perhaps the most important concern.
Theorem~\ref{thm:heavy1} and Theorem~\ref{thm:heavy2} state that, under rather weak assumptions, phase retrieval can be achieved by robustly recovering the rank-one matrix $\x_0\x_0^*$ via the PhaseLift method~\cite{candes2013phaselift,candes2014solving}; see Remark~\ref{re:PhaseLift} in Section~\ref{sec:main}.
In this section, we approach the problem from a different perspective and focus on characterizing the stability of the following phaseless operator~\cite{eldar2014phase,balan2015invertibility}:

\begin{equation}\label{eq:phaseless}
{\mF}_{\Omega}: \C^{n}/\sim\rightarrow\R^{m},\quad\quad	{\mF}_{\Omega}\lk\x\rk=\lk\lv\lg\va_{1},\x\rg\rv^{2}, \ldots, \lv\lg\va_{m},\x\rg\rv^{2} \rk^{\top}.
	\end{equation}
Here, since reconstruction is inherently possible only up to a global phase, we consider the quotient space $\C^{n}/\sim$ as the input domain, 
where the equivalence relation $\sim$ is given by $\x\sim\y$ if $\x=e^{\mathrm{i}\theta}\y$ for some $\theta\in[0,2\pi)$.
Following~\cite{eldar2014phase,balan2015invertibility,balan2016reconstruction,duchi2019solving}, we give the definition of stability of the phaseless operator $\mF_{\Omega}$.

\begin{definition}\label{def:stable}
	The phaseless operator $\mF_{\Omega}:\C^{n}/\sim\rightarrow\R^{m}$ is said to be $C$-stable on $\C^n$ with respect to $\lV\,\cdot\,\rV_{\ell_q}$ for some constant $C>0$ if, for every $\x,\y\in\C^n$, one has
	\begin{equation}\label{eq:stable}
		\lV \mF_{\Omega}\lk\x\rk- \mF_{\Omega}\lk\y\rk \rV_{\ell_q} \ge C\cdot \textbf{dist}^2 \lk \x,\y \rk.
		\end{equation}		
		Here, the distance $\textbf{dist} \lk \x,\y \rk$ is defined as $ \textbf{dist}\lk\x, \y\rk := \min\limits_{\theta\in[0,2\pi)} \lV e^{\mathrm{i}\theta}\x - \y \rV_{\ell_2}$.
\end{definition}

The main result of this section is as follows.

\begin{theorem}\label{thm:PR}
Let $\delta>0$ and $q\ge 1$.
Consider the phaseless operator $\mF_{\Omega}$ with $m$ sampling vectors $\Omega=\left\{\va_k\right\}_{k=1}^m$.
Here, $\left\{\va_k\right\}_{k=1}^m$ are independent copies of a random vector 
$\va \in \C^n$ with independent mean-zero, variance-one entries $\{a_i\}_{i=1}^n$.
Assume that $\{a_i\}_{i=1}^n$ satisfy~\eqref{eq:condition}, and let $\zeta$ be defined as in Theorem~\ref{thm:heavy1}.
Suppose that $m \ge C_1\lk\delta\rk \cdot f \cdot n$.
Then, with probability at least $1-e^{-C_2 m\cdot g^2}$, it holds that for all $\x,\y\in\C^n$,
\begin{equation*}
\lV \mF_{\Omega}\lk\x\rk- \mF_{\Omega}\lk\y\rk \rV_{\ell_q} \ge C_3\lk\delta\rk\cdot h\cdot m^{1/q}\cdot \textbf{dist}^2 \lk \x,\y \rk.
\end{equation*}
Here, $C_1\lk\delta\rk,C_3\lk\delta\rk$ are constants depending on $\delta$ and $C_2$ denotes a positive universal constant.
The constants $f,g,h$ are those defined in Theorem~\ref{thm:heavy1} and depend only on $\alpha_{4+\delta}$, $\zeta$, and $\delta$.
\end{theorem}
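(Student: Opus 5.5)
The plan is to reduce stability of $\mF_{\Omega}$ to a uniform lower bound for the quadratic sampling operator $\mA$ on a set of rank-two matrices, and then reuse the small ball argument of Section~\ref{subsec:proof1} with $r=2$.

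The reduction comes first. For $\x,\y\in\C^n$ we have $\mF_{\Omega}\lk\x\rk-\mF_{\Omega}\lk\y\rk=\mA\lk\x\x^*-\y\y^*\rk$. Also, $\M:=\x\x^*-\y\y^*$ is Hermitian of rank at most two. The key deterministic fact is
\begin{equation*}
\lV\x\x^*-\y\y^*\rV_F\ge c\cdot\textbf{dist}^2\lk\x,\y\rk
\end{equation*}
for an absolute constant $c>0$; for instance $\lV\x\x^*-\y\y^*\rV_*\ge\frac{1}{2}\textbf{dist}^2\lk\x,\y\rk$ is known (Balan; Eldar--Mendelson-type arguments). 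Since the rank is at most two, $\lV\cdot\rV_F\ge\lV\cdot\rV_*/\sqrt2$, so $c=\frac{1}{2\sqrt2}$ works. If needed I would verify this directly: after a global phase alignment making $\lg\x,\y\rg\ge0$, compute the eigenvalues of the $2\times2$ restriction and compare with $\lV\x-\y\rV_{\ell_2}^2$. It therefore suffices to show that, with the stated probability,
\begin{equation*}
\inf\left\{\lV\mA\lk\M\rk\rV_{\ell_q}:\M\in\mathcal{H}_n,\ \operatorname{rank}\M\le2,\ \lV\M\rV_F=1\right\}\gtrsim_{\delta} h\cdot m^{1/q}.
\end{equation*}

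For this lower bound I would apply Proposition~\ref{lm:small_ball} to $\mT=\{\M:\operatorname{rank}\M\le2,\lV\M\rV_F=1\}$. The small ball function is handled exactly as in~\eqref{eq:Q}, since that bound holds over all $\lV\M\rV_F=1$. It combines Lemma~\ref{lm:E^2}, Proposition~\ref{thm:4order} with $p=2+\delta/2$, and Fact~\ref{fact:PZ}. With $\xi=\zeta^{1/2}/(2\sqrt2)$ this gives $\mathcal{Q}_{2\xi}\gtrsim_\delta g$. For the empirical process, every $\M\in\mT$ satisfies $\lV\M\rV_*\le\sqrt2$. Hölder's inequality and Theorem~\ref{thm:chaos} with $p=4+\delta$ then give
\begin{equation*}
\mathcal{W}_m\lesssim_\delta\alpha_{4+\delta}^{2/(4+\delta)}\sqrt{n/m}+n/m.
\end{equation*}
Choosing $t=c_1\sqrt m\, g$ and $m\ge C_1(\delta)fn$ makes $2\mathcal{W}_m$ and $\xi t/\sqrt m$ at most a small fraction of $\xi\mathcal{Q}_{2\xi}\asymp_\delta \zeta^{1/2}g=h$. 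The bookkeeping is identical to Section~\ref{subsec:proof1} with $r$ replaced by $2$, which is why the same $f,g,h$ appear. The result is the displayed bound with probability $1-e^{-C_2mg^2}$.

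Combining the two steps gives
\begin{equation*}
\lV\mF_{\Omega}\lk\x\rk-\mF_{\Omega}\lk\y\rk\rV_{\ell_q}=\lV\M\rV_F\lV\mA\lk\M/\lV\M\rV_F\rk\rV_{\ell_q}\gtrsim_\delta h\,m^{1/q}\,\textbf{dist}^2\lk\x,\y\rk.
\end{equation*}
This holds uniformly in $\x,\y$, and the trivial case $\M=0$ forces $\textbf{dist}=0$. I expect the main obstacle to be the deterministic inequality relating $\lV\x\x^*-\y\y^*\rV_F$ to $\textbf{dist}^2$ in the complex case with a clean absolute constant. The probabilistic part is essentially a rerun of the proof of Theorem~\ref{thm:heavy1}. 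Alternatively, one could bypass that part by invoking the rank NSP for $r=2$ directly, since rank-two matrices lie in $\mT_{\rho,2}$ after normalization.
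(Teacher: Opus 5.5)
Your proposal is correct and follows the paper's proof essentially step for step. It uses the same reduction from $\textbf{dist}^2\lk\x,\y\rk$ to $\lV\x\x^*-\y\y^*\rV_F$ (the paper cites $\textbf{dist}^2\lk\x,\y\rk\le 2\lV\x\x^*-\y\y^*\rV_F$ from Duchi--Ruan), then applies Proposition~\ref{lm:small_ball} on unit-Frobenius Hermitian matrices of rank at most two with the small ball bound~\eqref{eq:Q}, $\lV\M\rV_*\le\sqrt2$, Theorem~\ref{thm:chaos}, and the same choices $\xi=\zeta^{1/2}/(2\sqrt2)$, $t=c_1\sqrt m\,g$; the deterministic step you flagged as the main obstacle is just a short eigenvalue computation, since with $s=\lV\x\rV_{\ell_2}^2+\lV\y\rV_{\ell_2}^2$ and $u=\lv\lg\x,\y\rg\rv$ one gets $\lV\x\x^*-\y\y^*\rV_*=\sqrt{(s-2u)(s+2u)}\ge s-2u=\textbf{dist}^2\lk\x,\y\rk$.
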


\begin{remark}
The stability of the phaseless operator $\mF_{\Omega}$ in the real case, measured in the $\ell_1$-norm, was first studied in~\cite{eldar2014phase}, where the analysis relies on a sub-Gaussian assumption on the sampling vectors. 
Later,~\cite[Proposition~1]{duchi2019solving} extended this result by showing that only a small ball condition is required.
However, in the complex setting, their analysis still relies on a sub-Gaussian assumption; see~\cite[Proposition~2]{duchi2019solving}. 
In contrast, our result does not rely on any sub-Gaussian assumption.
We also mention that related stability results for the amplitude model
$\{\lv\lg \va_k,\x_0\rg\rv\}_{k=1}^m$ have been considered in ongoing work~\cite{abdalla_ramos_taylor_inprep}, where the assumptions are formulated in terms of a rank-$2$ small ball condition.
This is a different setting from that of the present theorem.
\end{remark}

\begin{remark}
The stability result in Theorem~\ref{thm:PR} also applies to the complex projective $t$-design setting discussed in Section~\ref{sec:tdesign}. 
Indeed, for phase retrieval, the lifted difference $\x\x^*-\y\y^*$ has rank at most $2$, so the proof only requires the corresponding small ball and empirical process estimates on rank-$2$ Hermitian matrices. 
Consequently, complex projective $4$-design measurements yield stability of $\mF_{\Omega}$ with sample complexity $m=\mathcal{O}\lk n\rk$.
\end{remark}

\begin{proof}[Proof of Theorem~\ref{thm:PR}]
    By~\cite[Lemma~A.4]{duchi2019solving} or \cite[Proposition~1]{huang2025stable}, we have $\textbf{dist}^2 \lk \x,\y \rk\le2\lV\x\x^*-\y\y^*\rV_F$, which implies that
    \begin{equation*}
    \begin{aligned}
\inf_{\substack{\x,\y\in\C^n\\\x\x^*\neq \y\y^*}}\frac{\lV \mF_{\Omega}\lk\x\rk- \mF_{\Omega}\lk\y\rk \rV_{\ell_q}}{\textbf{dist}^2 \lk \x,\y \rk} 
&\ge \inf_{\substack{\x,\y\in\C^n\\\x\x^*\neq \y\y^*}}\frac{\lV \mF_{\Omega}\lk\x\rk- \mF_{\Omega}\lk\y\rk \rV_{\ell_q}}{2\lV\x\x^*-\y\y^*\rV_F}\\
&=\inf_{\substack{\x,\y\in\C^n\\\x\x^*\neq \y\y^*}}\frac{1}{2}\lk\sum_{k=1}^m\lv\left\langle\va_k\va_k^*,\frac{\x\x^*-\y\y^*}{\lV\x\x^*-\y\y^*\rV_F}\right\rangle\rv^q\rk^{1/q}\\
&\ge\inf_{\substack{\M\in\mathcal{H}_n,\lV\M\rV_F=1\\ \operatorname{rank}\lk\M\rk\le2}}\frac{1}{2}\lk\sum_{k=1}^m\lv\left\langle\va_k\va_k^*,\M\right\rangle\rv^q\rk^{1/q}
\end{aligned}
\end{equation*}
Now set $\widetilde\mT:=\left\{\M\in\mathcal{H}_n:\lV\M\rV_F=1, \operatorname{rank}\lk\M\rk\le2\right\}$.
We apply the small ball method from Proposition~\ref{lm:small_ball} in Section~\ref{subsec:SB}.
First, by~\eqref{eq:Q} in Section~\ref{subsec:proof1}, we have
\begin{equation*}	     
     \mathcal{Q}_{2\xi}\lk \widetilde\mT;\va\va^*\rk=\inf_{\M\in\widetilde\mT}\mathbb{P}\lk\lv \lg\va\va^*,\M\rg\rv\ge2\xi\rk
     \gtrsim_{\delta}\frac{\lk\zeta-4\xi^2\rk^{\frac{4+\delta}{\delta}}}{\alpha_{4+\delta}^{4/\delta}}.
	         \end{equation*}
    Moreover, by Theorem~\ref{thm:chaos} in Section~\ref{subsec:covariance},
    \begin{equation*}	
    \begin{aligned}
       \mathcal{W}_{m}\lk \widetilde\mT;\va\va^*\rk
       &=\E\sup_{\M\in\widetilde\mT}\lv\frac{1}{m}\sum_{k=1}^{m}\varepsilon_{k}\left\langle\va_k\va_k^*,\M \right\rangle\rv\\
       &\le\sqrt{2}\,\E\,\lV\frac{1}{m}\sum_{k=1}^{m}\varepsilon_{k}\va_k\va_k^*\rV_{op}
       \lesssim_{\delta} \sqrt{2}\lk\alpha_{4+\delta}^{2/\lk4+\delta\rk}\sqrt{\frac{n}{m}}+\frac{n}{m}\rk.
       \end{aligned}
	         \end{equation*}
             Here we used  
$\lV\M\rV_*\le\sqrt{\operatorname{rank}\lk\M\rk}\cdot\lV\M\rV_F\le\sqrt{2}$.
Finally, choosing $\xi=\frac{\zeta^{1/2}}{2\sqrt{2}}$ and $t=c_1\sqrt{m}\cdot g$ as in the proof of Theorem~\ref{thm:heavy1} in Section~\ref{subsec:proof1}, Proposition~\ref{lm:small_ball} implies that, provided $m\ge C_1\lk\delta\rk f n$, with probability at least $1-e^{-C_2 m\cdot g^2}$,
             \begin{equation*}
    \begin{aligned}
\inf_{\substack{\x,\y\in\C^n\\\x\x^*\neq \y\y^*}}\frac{\lV \mF_{\Omega}\lk\x\rk- \mF_{\Omega}\lk\y\rk \rV_{\ell_q}}{\textbf{dist}^2 \lk \x,\y \rk} 
&\ge\inf_{\M\in \widetilde\mT}\frac{1}{2}\lk\sum_{k=1}^m\lv\left\langle\va_k\va_k^*,\M\right\rangle\rv^q\rk^{1/q}\gtrsim_{\delta} h\cdot m^{1/q}.
\end{aligned}
\end{equation*}

\end{proof}

\section{Numerical Experiments}\label{sec:experiments}

We conduct numerical experiments to validate our theoretical results. 
We compare a heavy-tailed sampling ensemble with the standard complex Gaussian ensemble.
In both cases, the sampling vectors $\left\{\va_{k}\right\}_{k=1}^m$ are independent copies of a random vector $\va=(a_1,\ldots,a_n)^{\top}$ with i.i.d.\ entries. 
For the heavy-tailed ensemble, each entry is an independent copy of $a=\sqrt{\frac{3}{10}}\lk X+\mathrm{i}Y\rk$, where 
$X,Y\stackrel{\mathrm{i.i.d.}}{\sim}t_5$. 
Since $\operatorname{Var}\lk t_5\rk=5/3$, this normalization gives 
$\mathbb{E}\lz\lv a\rv^2\rz=1$, while $a$ has finite absolute moment of order $q$ if and only if $q<5$. 
For the Gaussian ensemble, each entry is an independent copy of 
$a\sim\mathcal{CN}\lk0,1\rk$.
In all trials, the ground-truth matrix $\M_0\in \mathbb{C}^{n\times n}$ is randomly generated in the form $\M_0=\pmb{Z}\pmb{Z}^*$ with $\pmb{Z}\in\mathbb{C}^{n\times r}$, and then normalized so that $\lV\M_0\rV_F=1$.
For both recovery models~\eqref{nuclear} and~\eqref{PSD}, the data-fidelity term is measured in the $\ell_2$-norm.

\paragraph{Phase transition.}

We first examine the empirical sample complexity in the noiseless setting. 
We fix $n=50$ and $r=3$, and vary the number of measurements through the oversampling ratio
$m/(rn)\in\{3,3.5,4,4.5,5,5.5,6\}$.
For each value of $m/(rn)$, each sampling ensemble, and each recovery model, we run 20 independent trials. 
In every trial, both the sampling vectors and the ground-truth matrix are generated independently. 
A trial is declared successful if the Frobenius reconstruction error satisfies
   $ \lV\pmb{M}^{\sharp}-\pmb{M}_0\rV_F<5\cdot 10^{-3}$.
The empirical success probability is then computed as the fraction of successful trials among the 20 repetitions. 
Figure~\ref{fig:phase-transition} reports the resulting phase transition curves. 
The left panel corresponds to the nuclear norm minimization model~\eqref{nuclear} (NNM model), while the right panel corresponds to the semidefinite-constrained empirical risk minimization model~\eqref{PSD} (PSD model). 
This experiment confirms that, for both recovery models, the number of measurements required for successful recovery scales on the order of $rn$, and that the heavy-tailed Student-$t_5$ ensemble exhibits phase transition behavior essentially similar to that of the Gaussian benchmark.

\begin{figure}[htbp]
    \centering
    \includegraphics[width=1.0\linewidth]{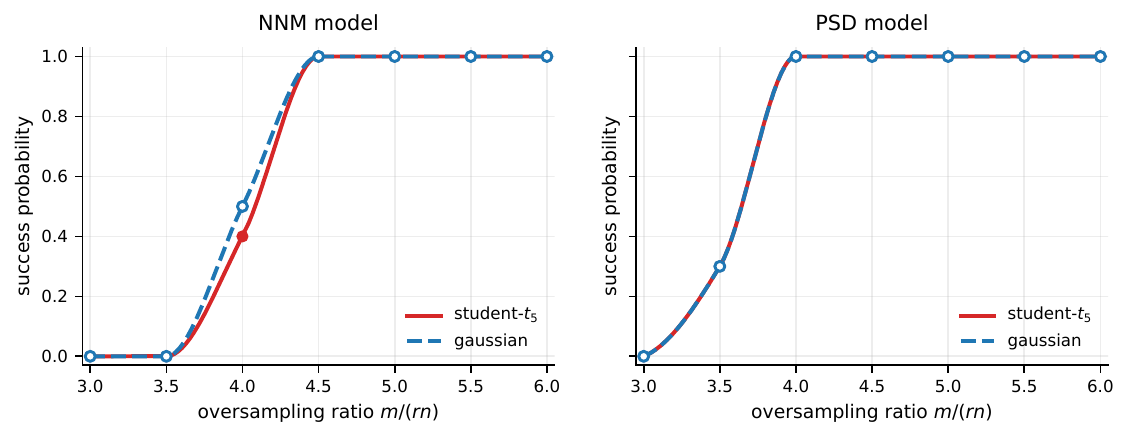}
     \caption{Phase transition for the Student-$t_5$ and Gaussian ensembles.}
    \label{fig:phase-transition}
\end{figure}

\paragraph{Noise robustness.}
We next investigate the robustness of the two recovery models under additive noise. 
We take $n=30$ and $r=3$, and fix the oversampling ratio at $m/(rn)=4.5$.
For each normalized noise level
\begin{equation*}
    \nu=\lV\pmb{\omega}\rV_{\ell_2}/\sqrt m
    \in\{10^{-4},3\cdot 10^{-4},10^{-3},3\cdot 10^{-3},10^{-2},3\cdot 10^{-2}\},
\end{equation*}
and for each sampling ensemble and each recovery model, we run 20 independent trials. 
In each trial, the noise vector is generated with independent real Gaussian entries and then rescaled so that
$\lV\pmb{\omega}\rV_{\ell_2}=\nu\sqrt m$.
We report the mean Frobenius reconstruction error $\lV\pmb{M}^{\sharp}-\pmb{M}_0\rV_F$ over the 20 trials. 
Since $\lV\pmb{M}_0\rV_F=1$, this error is also the relative Frobenius error. 
Figure~\ref{fig:noise-robustness} shows the mean reconstruction error as a function of the normalized noise level. 
Both the NNM model~\eqref{nuclear} and the PSD model~\eqref{PSD} are included in the comparison.
The dotted reference line has slope one. 
The curves are approximately parallel to this reference line, indicating that the reconstruction error grows nearly linearly with the noise level. 
Moreover, the Student-$t_5$ ensemble exhibits robustness comparable to the Gaussian benchmark, which is consistent with the robustness predicted by our results.

\begin{figure}[htbp]
    \centering
    \includegraphics[width=1.0\linewidth]{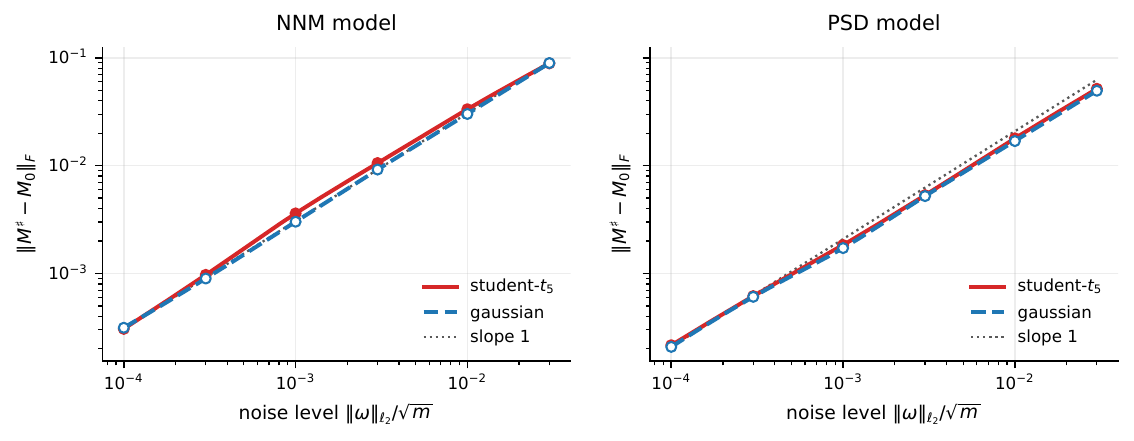}
    \caption{Noise robustness under the Student-$t_5$ and Gaussian ensembles.}
    \label{fig:noise-robustness}
\end{figure}

\appendix

\section{Proof of Fact~\ref{fact:PZ}}\label{ap:PZ}

For completeness, we include the proof; see also equation~(9) in~\cite{petrov2007lower}.
Let $Z,s,q,\theta$ be as in Fact~\ref{fact:PZ}, and define
$\mathcal{E}:=\{Z\ge \theta \lV Z\rV_{L_s}\}$. 
Since $Z^s<\theta^s\lV Z\rV_{L_s}^s$ on $\mathcal{E}^c$, we have
\begin{equation}\label{eq:PZ1}
    \lV Z\rV_{L_s}^s
    =\E\lk Z^s\mathbf{1}_{\mathcal{E}}\rk+ \E\lk Z^s\mathbf{1}_{\mathcal{E}^c}\rk
    \le \E\lk Z^s\mathbf{1}_{\mathcal{E}}\rk+\theta^s\lV Z\rV_{L_s}^s.
\end{equation}
Moreover, H\"older's inequality gives
\begin{equation}\label{eq:PZ2}
    \E\lk Z^s\mathbf{1}_{\mathcal{E}}\rk
    \le\lk \E Z^q\rk^{\frac{s}{q}} \lk \mathbb{P}\lk \mathcal{E}\rk\rk^{1-\frac{s}{q}}
    =
    \lV Z\rV_{L_q}^s\lk \mathbb{P}\lk \mathcal{E}\rk\rk^{1-\frac{s}{q}}.
\end{equation}
Combining~\eqref{eq:PZ1} and~\eqref{eq:PZ2}, we obtain
\begin{equation*}
    \lk 1-\theta^s\rk\lV Z\rV_{L_s}^s
    \le
    \lV Z\rV_{L_q}^s\lk \mathbb{P}\lk \mathcal{E}\rk\rk^{1-\frac{s}{q}}.
\end{equation*}
Rearranging yields~\eqref{eq:PZ0}.

\section{Auxiliary Proofs from Section~\ref{subsec:covariance}}

\subsection{\texorpdfstring{Proof of Equation~\eqref{eq:isotropic}}{Proof of Equation~(isotropic)}}\label{ap:realification}

Since $\E e^{\mathrm{i}\theta}=0$, we have
$\E\,\vb=\E e^{\mathrm{i}\theta}\cdot \E\,\va=\pmb{0}$.
Moreover, since $\va$ is isotropic, and by the independence of $\theta$ and $\va$, we obtain 
\begin{equation*}
\E\,\vb\vb^*=\E\,\va\va^*=\pmb{I}_n,\quad\E\,\vb\vb^{\top}
=\E\lk e^{2\mathrm{i}\theta}\va\va^{\top}\rk
=\E e^{2\mathrm{i}\theta}\cdot \E\,\va\va^{\top}
=\pmb{0}.
\end{equation*}
Comparing the real and imaginary parts in the two identities above, we obtain
\begin{equation*}
\E\,\Re\lk\vb\rk\Re\lk\vb\rk^{\top}
=
\E\,\Im\lk\vb\rk\Im\lk\vb\rk^{\top}
=
\frac{1}{2}\pmb{I}_n,
\qquad
\E\,\Re\lk\vb\rk\Im\lk\vb\rk^{\top}
=
\E\,\Im\lk\vb\rk\Re\lk\vb\rk^{\top}
=
\pmb{0}.
\end{equation*}
Therefore,
\begin{equation*}
\E\,\widetilde{\vb}\widetilde{\vb}^{\top}
=
\begin{pmatrix}
\E\,\Re\lk\vb\rk\Re\lk\vb\rk^{\top} 
& \E\,\Re\lk\vb\rk\Im\lk\vb\rk^{\top}\\
\E\,\Im\lk\vb\rk\Re\lk\vb\rk^{\top} 
& \E\,\Im\lk\vb\rk\Im\lk\vb\rk^{\top}
\end{pmatrix}
=
\frac{1}{2}\pmb{I}_{2n}.
\end{equation*}

Let $\x=(\x_1,\x_2)\in\mathbb{S}^{2n-1}$ with $\x_1,\x_2\in\mathbb{R}^n$, and set
$\z:=\x_1+\mathrm{i}\x_2\in\mathbb{S}_{\mathbb C}^{n-1}$.
Then
$\left\langle \widetilde{\vb},\x\right\rangle=\Re\lk\left\langle \vb,\z\right\rangle\rk$.
Hence,
\begin{equation*}
\begin{aligned}
\sup_{\x\in\mathbb S^{2n-1}}\lk\E\lv\left\langle\widetilde{\vb},\x\right\rangle\rv^p\rk^{1/p}
\le\sup_{\z\in\mathbb S_{\mathbb C}^{n-1}}\lk\E\lv\left\langle \vb,\z\right\rangle\rv^p\rk^{1/p}
=\sup_{\z\in\mathbb S_{\mathbb C}^{n-1}}\lk\E\lv\left\langle \va,\z\right\rangle\rv^p\rk^{1/p}
\le\tilde{\kappa}_p.
\end{aligned}
\end{equation*}

\subsection{\texorpdfstring{Proof of Equation~\eqref{eq:Raa}}{Proof of Equation~(Raa)}}\label{ap:Raa}
Since
\begin{equation*}
\vb\vb^*
=
\lk
\Re\lk\vb\rk\Re\lk\vb\rk^\top+
\Im\lk\vb\rk\Im\lk\vb\rk^\top
\rk+\mathrm{i}\lk\Im\lk\vb\rk\Re\lk\vb\rk^\top-
\Re\lk\vb\rk\Im\lk\vb\rk^\top
\rk,
\end{equation*}
the definition of $\mathcal{R}$ gives
\begin{equation*}
\mathcal R\lk \vb\vb^*\rk
=
\begin{pmatrix}
\Re\lk\vb\rk\Re\lk\vb\rk^\top+\Im\lk\vb\rk\Im\lk\vb\rk^\top &
\Re\lk\vb\rk\Im\lk\vb\rk^\top-\Im\lk\vb\rk\Re\lk\vb\rk^\top\\
\Im\lk\vb\rk\Re\lk\vb\rk^\top-\Re\lk\vb\rk\Im\lk\vb\rk^\top &
\Re\lk\vb\rk\Re\lk\vb\rk^\top+\Im\lk\vb\rk\Im\lk\vb\rk^\top
\end{pmatrix}.
\end{equation*}
Then, a direct calculation gives
\begin{equation*}
\widetilde{\vb}\widetilde{\vb}^{\top}+\widehat{\vb}\widehat{\vb}^{\top}
=
\begin{pmatrix}
\Re\lk\vb\rk\\
\Im\lk\vb\rk
\end{pmatrix}
\begin{pmatrix}
\Re\lk\vb\rk^\top & \Im\lk\vb\rk^\top
\end{pmatrix}+\begin{pmatrix}
-\Im\lk\vb\rk\\
\Re\lk\vb\rk
\end{pmatrix}
\begin{pmatrix}
-\Im\lk\vb\rk^\top & \Re\lk\vb\rk^\top
\end{pmatrix}=\mathcal{R}\lk \vb\vb^*\rk.
\end{equation*}

\subsection{\texorpdfstring{Proof of Equation~\eqref{eq:mu8}}{Proof of Equation~(mu8)}}\label{ap:mu8}

Let $X_i=\overline{a_i}x_i$. 
Then $\E\lz X_i\rz=0,\E\lv X_i\rv^2=\lv x_i\rv^2$, and 
$\E\lv X_i\rv^p=\lv x_i\rv^p\E\lv a_i\rv^p\le \alpha_p \lv x_i\rv^p$.
Applying Rosenthal's inequality in~\eqref{eq:rosen} yields, for any $\x\in\mathbb{S}_{\C}^{n-1}$,
\begin{equation*}
\begin{aligned}
\E\lv\lg\va,\x\rg\rv^p\lesssim_p \lk\sum_{i=1}^n \lv x_i\rv^2\rk^{p/2}+\sum_{i=1}^n \alpha_p \lv x_i\rv^p\lesssim_p 1+\alpha_p\sum_{i=1}^n \lv x_i\rv^p\lesssim_p \alpha_p.
\end{aligned}
\end{equation*}
Here, we used that for $p>2$,
$\sum_{i=1}^n \lv x_i\rv^p\le \lk\sum_{i=1}^n \lv x_i\rv^2\rk^{p/2}=1$
and $\alpha_p\ge1$.
Taking the supremum over $\x\in\mathbb{S}_{\C}^{n-1}$ gives~\eqref{eq:mu8}.

\subsection
{\texorpdfstring{Proof of Equation~\eqref{eq:diag}}{Proof of Equation~(diag)}}\label{ap:diag}

Note that $\lV\va\rV_{\ell_2}^2=\sum_{k=1}^n \lv a_k\rv^2$.
Thus the $(i,j)$-th entry of $\E\lV\va\rV_{\ell_2}^2\,\va\va^*$ equals
\begin{equation*}
\E\lz\lV\va\rV_{\ell_2}^2 a_i \overline{a_j}\rz
=\sum_{k=1}^n \E\lz \lv a_k\rv^2 a_i \overline{a_j}\rz.
\end{equation*}
If $i\neq j$, by independence and $\E a_i=\E a_j=0$, every term in the sum vanishes, hence the off-diagonal entries are zero.  
If $i=j$, then
\begin{equation*}
\E\lz\lV\va\rV_{\ell_2}^2 \lv a_i\rv^2\rz
=\E\lz \lv a_i\rv^4\rz+\sum_{k\neq i}\E\lz \lv a_k\rv^2\rz\E\lz \lv a_i\rv^2\rz
=\E\lz \lv a_i\rv^4\rz+\lk n-1\rk.
\end{equation*}
Therefore, we obtain~\eqref{eq:diag}.

\section{Proof of Fact~\ref{lm:max}}\label{ap:max}
We write
$\lV\va\rV_{\ell_2}^2:=n+\sum_{i=1}^n X_i$,
where $X_{i}:=\lv a_{i}\rv^2-1$.
As in~\eqref{eq:E2} and~\eqref{eq:Ep} in Section~\ref{subsec:quad}, we have
\begin{equation*}
\E\,\lv X_{i}\rv^2\le \alpha_4,\quad\text{and}\quad \E\,\lv X_{i}\rv^{p/2}\lesssim_p\alpha_{p}.
\end{equation*}
Hence, by Rosenthal's inequality in~\eqref{eq:rosen},
\begin{equation*}
\E\,\lv \sum_{i=1}^n X_{i}\rv^{p/2}
\lesssim_p \lk \alpha_4n\rk^{p/4}+\alpha_p n\lesssim_p \alpha_p n^{p/4}.
\end{equation*}
Now, let $u\ge 2n$. 
Then, by Markov's inequality,
\begin{equation*}
\begin{aligned}
\mathbb{P}\lk \lV\va\rV_{\ell_2}^2\ge u\rk
&=\mathbb{P}\lk \sum_{i=1}^n X_{i}\ge u-n\rk\le
\mathbb{P}\lk \lv \sum_{i=1}^n X_{i}\rv \ge \frac{u}{2}\rk\\
&\lesssim_p \frac{\E\,\lv \sum_{i=1}^n X_{i}\rv^{p/2}}{u^{p/2}}\lesssim_p\frac{\alpha_p n^{p/4}}{u^{p/2}}.
\end{aligned}
\end{equation*}
Finally, by the union bound,
\begin{equation*}
\mathbb{P}\lk \max_{1\le k\le m}\lV\va_k\rV_{\ell_2}^2\ge u\rk
\le\sum_{k=1}^m \mathbb{P}\lk \lV\va_k\rV_{\ell_2}^2\ge u\rk\lesssim_p\frac{\alpha_p mn^{p/4}}{u^{p/2}}.
\end{equation*}
Choosing $u= 2n+C\lk p\rk \alpha_p^{2/p}\sqrt{mn}$ with $C\lk p\rk$ sufficiently large completes the proof.

\normalem
\bibliographystyle{plain}

\bibliography{ref}

@article{candes2015phase,
  title={Phase retrieval via {W}irtinger flow: Theory and algorithms},
  author={Cand{\`e}s, Emmanuel J and Li, Xiaodong and Soltanolkotabi, Mahdi},
  journal={IEEE Transactions on Information Theory},
  volume={61},
  number={4},
  pages={1985--2007},
  year={2015},
  publisher={IEEE}
}

@article{candes2013phaselift,
  title={{PhaseLift}: Exact and stable signal recovery from magnitude measurements via convex programming},
  author={Cand{\`e}s, Emmanuel J and Strohmer, Thomas and Voroninski, Vladislav},
  journal={Communications on Pure and Applied Mathematics},
  volume={66},
  number={8},
  pages={1241--1274},
  year={2013},
  publisher={Wiley Online Library}
}

@article{huang2025stable,
  title={Stable Phase Retrieval: Optimal Rates in {Poisson} and Heavy-tailed Models},
  author={Huang, Gao and Li, Song and Needell, Deanna},
  journal={arXiv preprint arXiv:2510.00551},
  year={2025}
}

@article{candes2015phase1,
  title={Phase retrieval via matrix completion},
  author={Cand{\`e}s, Emmanuel J and Eldar, Yonina C and Strohmer, Thomas and Voroninski, Vladislav},
  journal={SIAM Review},
  volume={57},
  number={2},
  pages={225--251},
  year={2015},
  publisher={SIAM}
}

@article{kabanava2016stable,
  title={Stable low-rank matrix recovery via null space properties},
  author={Kabanava, Maryia and Kueng, Richard and Rauhut, Holger and Terstiege, Ulrich},
  journal={Information and Inference: A Journal of the IMA},
  volume={5},
  number={4},
  pages={405--441},
  year={2016},
  publisher={Oxford University Press}
}

@article{koltchinskii2015bounding,
  title={Bounding the smallest singular value of a random matrix without concentration},
  author={Koltchinskii, Vladimir and Mendelson, Shahar},
  journal={International Mathematics Research Notices},
  volume={2015},
  number={23},
  pages={12991--13008},
  year={2015},
  publisher={Oxford University Press}
}

@article{mendelson2015learning,
  title={Learning without concentration},
  author={Mendelson, Shahar},
  journal={Journal of the ACM (JACM)},
  volume={62},
  number={3},
  pages={1--25},
  year={2015},
  publisher={ACM New York, NY, USA}
}

@article{lecue2017sparse,
  title={Sparse recovery under weak moment assumptions},
  author={Lecu{\'e}, Guillaume and Mendelson, Shahar},
  journal={Journal of the European Mathematical Society},
  volume={19},
  number={3},
  pages={881--904},
  year={2017}
}

@article{kueng2017low,
  title={Low rank matrix recovery from rank one measurements},
  author={Kueng, Richard and Rauhut, Holger and Terstiege, Ulrich},
  journal={Applied and Computational Harmonic Analysis},
  volume={42},
  number={1},
  pages={88--116},
  year={2017},
  publisher={Elsevier}
}

@incollection{tropp2015convex,
  author    = {Tropp, Joel A.},
  title     = {Convex recovery of a structured signal from independent random linear measurements},
  booktitle = {Sampling Theory, a Renaissance: Compressive Sensing and Other Developments},
  pages     = {67--101},
  publisher = {Birkh{\"a}user},
  year      = {2015}
}

@article{krahmer2020complex,
  title={Complex phase retrieval from subgaussian measurements},
  author={Krahmer, Felix and St{\"o}ger, Dominik},
  journal={Journal of Fourier Analysis and Applications},
  volume={26},
  number={6},
  pages={89},
  year={2020},
  publisher={Springer}
}

@article{huang2025low,
  title={Low-rank {Toeplitz} matrix restoration: Descent cone analysis and structured random matrix},
  author={Huang, Gao and Li, Song},
  journal={IEEE Transactions on Information Theory},
  volume={71},
  number={5},
  pages={3950--3956},
  year={2025},
  publisher={IEEE}
}

@article{krahmer2021convex,
  title={On the convex geometry of blind deconvolution and matrix completion},
  author={Krahmer, Felix and St{\"o}ger, Dominik},
  journal={Communications on Pure and Applied Mathematics},
  volume={74},
  number={4},
  pages={790--832},
  year={2021},
  publisher={Wiley Online Library}
}

@article{tikhomirov2018sample,
  title={Sample covariance matrices of heavy-tailed distributions},
  author={Tikhomirov, Konstantin},
  journal={International Mathematics Research Notices},
  volume={2018},
  number={20},
  pages={6254--6289},
  year={2018},
  publisher={Oxford University Press}
}

@article{abdalla2024covariance,
  title   = {Covariance estimation: Optimal dimension-free guarantees for adversarial corruption and heavy tails},
  author  = {Abdalla, Pedro and Zhivotovskiy, Nikita},
  journal = {Journal of the European Mathematical Society},
  volume  = {28},
  number  = {4},
  pages   = {1809--1847},
  year    = {2026},
  doi     = {10.4171/JEMS/1505}
}

@article{jirak2025concentration,
  title={Concentration and moment inequalities for sums of independent heavy-tailed random matrices},
  author={Jirak, Moritz and Minsker, Stanislav and Shen, Yiqiu and Wahl, Martin},
  journal={Probability Theory and Related Fields},
  volume={194},
  pages={1917--1944},
  year={2026},
  doi={10.1007/s00440-025-01412-6},
  publisher={Springer}
}

@book{vershynin2018high,
  title     = {High-Dimensional Probability: An Introduction with Applications in Data Science},
  author    = {Vershynin, Roman},
  volume    = {47},
  year      = {2018},
  publisher = {Cambridge University Press}
}

@article{abdalla2022dictionary,
  title={Dictionary-sparse recovery from heavy-tailed measurements},
  author={Abdalla, Pedro and K{\"u}mmerle, Christian},
  journal={Information and Inference: A Journal of the IMA},
  volume={11},
  number={4},
  pages={1501--1526},
  year={2022},
  publisher={Oxford University Press}
}

@article{rudelson2013hanson,
  title={{Hanson-Wright} inequality and {sub-Gaussian} concentration},
  author={Rudelson, Mark and Vershynin, Roman},
  journal={Electronic Communications in Probability},
   volume={18},
  pages={1--9},
  year={2013}
}

@article{rosenthal1970subspaces,
  title={On the subspaces of $L_p$ ($p> 2$) spanned by sequences of independent random variables},
  author={Rosenthal, Haskell P},
  journal={Israel Journal of Mathematics},
  volume={8},
  number={3},
  pages={273--303},
  year={1970},
  publisher={Springer}
}

@article{rauhut2019low,
  title={Low-rank matrix recovery via rank one tight frame measurements},
  author={Rauhut, Holger and Terstiege, Ulrich},
  journal={Journal of Fourier Analysis and Applications},
  volume={25},
  number={2},
  pages={588--593},
  year={2019},
  publisher={Springer}
}

@article{gross2015partial,
  title={A partial derandomization of {PhaseLift} using spherical designs},
  author={Gross, David and Krahmer, Felix and Kueng, Richard},
  journal={Journal of Fourier Analysis and Applications},
  volume={21},
  number={2},
  pages={229--266},
  year={2015},
  publisher={Springer}
}

@article{gross2010quantum,
  title={Quantum state tomography via compressed sensing},
  author={Gross, David and Liu, Yi-Kai and Flammia, Steven T and Becker, Stephen and Eisert, Jens},
  journal={Physical Review Letters},
  volume={105},
  number={15},
  pages={150401},
  year={2010},
  publisher={APS}
}

@article{balan2015invertibility,
  title={Invertibility and robustness of phaseless reconstruction},
  author={Balan, Radu and Wang, Yang},
  journal={Applied and Computational Harmonic Analysis},
  volume={38},
  number={3},
  pages={469--488},
  year={2015},
  publisher={Elsevier}
}

@article{balan2016reconstruction,
  title={Reconstruction of signals from magnitudes of redundant representations: The complex case},
  author={Balan, Radu},
  journal={Foundations of Computational Mathematics},
  volume={16},
  number={3},
  pages={677--721},
  year={2016},
  publisher={Springer}
}

@article{eldar2014phase,
  title={Phase retrieval: Stability and recovery guarantees},
  author={Eldar, Yonina C and Mendelson, Shahar},
  journal={Applied and Computational Harmonic Analysis},
  volume={36},
  number={3},
  pages={473--494},
  year={2014},
  publisher={Elsevier}
}

@article{duchi2019solving,
  title={Solving (most) of a set of quadratic equalities: Composite optimization for robust phase retrieval},
  author={Duchi, John C and Ruan, Feng},
  journal={Information and Inference: A Journal of the IMA},
  volume={8},
  number={3},
  pages={471--529},
  year={2019},
  publisher={Oxford University Press}
}

@unpublished{abdalla_ramos_taylor_inprep,
  author = {Pedro Abdalla and Jo{\~a}o P. G. Ramos and Mitchell A. Taylor},
  note   = {In preparation},
  year   = {2026}
}

@article{candes2014solving,
  title={Solving quadratic equations via {PhaseLift} when there are about as many equations as unknowns},
  author={Cand{\`e}s, Emmanuel J and Li, Xiaodong},
  journal={Foundations of Computational Mathematics},
  volume={14},
  number={5},
  pages={1017--1026},
  year={2014},
  publisher={Springer}
}

@article{flammia2012quantum,
  title={Quantum tomography via compressed sensing: error bounds, sample complexity and efficient estimators},
  author={Flammia, Steven T and Gross, David and Liu, Yi-Kai and Eisert, Jens},
  journal={New Journal of Physics},
  volume={14},
  number={9},
  pages={095022},
  year={2012},
  publisher={IOP Publishing}
}

@article{ahmed2014compressive,
  title={Compressive multiplexing of correlated signals},
  author={Ahmed, Ali and Romberg, Justin},
  journal={IEEE Transactions on Information Theory},
  volume={61},
  number={1},
  pages={479--498},
  year={2014},
  publisher={IEEE}
}

@article{recht2010guaranteed,
  title={Guaranteed minimum-rank solutions of linear matrix equations via nuclear norm minimization},
  author={Recht, Benjamin and Fazel, Maryam and Parrilo, Pablo A},
  journal={SIAM Review},
  volume={52},
  number={3},
  pages={471--501},
  year={2010},
  publisher={SIAM}
}

@article{shechtman2015phase,
  title={Phase retrieval with application to optical imaging: a contemporary overview},
  author={Shechtman, Yoav and Eldar, Yonina C and Cohen, Oren and Chapman, Henry Nicholas and Miao, Jianwei and Segev, Mordechai},
  journal={IEEE Signal Processing Magazine},
  volume={32},
  number={3},
  pages={87--109},
  year={2015},
  publisher={IEEE}
}

@article{balan2009painless,
  title={Painless reconstruction from magnitudes of frame coefficients},
  author={Balan, Radu and Bodmann, Bernhard G and Casazza, Peter G and Edidin, Dan},
  journal={Journal of Fourier Analysis and Applications},
  volume={15},
  number={4},
  pages={488--501},
  year={2009},
  publisher={Springer}
}

@article{chandrasekaran2012convex,
  title={The convex geometry of linear inverse problems},
  author={Chandrasekaran, Venkat and Recht, Benjamin and Parrilo, Pablo A and Willsky, Alan S},
  journal={Foundations of Computational Mathematics},
  volume={12},
  number={6},
  pages={805--849},
  year={2012},
  publisher={Springer}
}

@article{demanet2014stable,
  title={Stable optimizationless recovery from phaseless linear measurements},
  author={Demanet, Laurent and Hand, Paul},
  journal={Journal of Fourier Analysis and Applications},
  volume={20},
  number={1},
  pages={199--221},
  year={2014},
  publisher={Springer}
}

@article{cai2015rop,
  author  = {T. Tony Cai and Anru Zhang},
  title   = {{ROP}: Matrix Recovery via Rank-One Projections},
  journal = {The Annals of Statistics},
  volume  = {43},
  number  = {1},
  pages   = {102--138},
  year    = {2015}
}

@article{chen2015exact,
  title={Exact and stable covariance estimation from quadratic sampling via convex programming},
  author={Chen, Yuxin and Chi, Yuejie and Goldsmith, Andrea J},
  journal={IEEE Transactions on Information Theory},
  volume={61},
  number={7},
  pages={4034--4059},
  year={2015},
  publisher={IEEE}
}

@article{krahmer2022robustness,
  title={On the robustness of noise-blind low-rank recovery from rank-one measurements},
  author={Krahmer, Felix and K{\"u}mmerle, Christian and Melnyk, Oleh},
  journal={Linear Algebra and its Applications},
  volume={652},
  pages={37--81},
  year={2022},
  publisher={Elsevier}
}

@article{li2021nonconvex,
  title={Nonconvex Matrix Factorization From Rank-One Measurements},
  author={Li, Yuanxin and Ma, Cong and Chen, Yuxin and Chi, Yuejie},
  journal={IEEE Transactions on Information Theory},
  volume={67},
  number={3},
  pages={1928--1950},
  year={2021},
  publisher={IEEE}
}

@article{foucart2019iterative,
  title={Iterative hard thresholding for low-rank recovery from rank-one projections},
  author={Foucart, Simon and Subramanian, Srinivas},
  journal={Linear Algebra and its Applications},
  volume={572},
  pages={117--134},
  year={2019},
  publisher={Elsevier}
}

@article{eisenmann2023riemannian,
  title={Riemannian thresholding methods for row-sparse and low-rank matrix recovery},
  author={Eisenmann, Henrik and Krahmer, Felix and Pfeffer, Max and Uschmajew, Andr{\'e}},
  journal={Numerical Algorithms},
  volume={93},
  number={2},
  pages={669--693},
  year={2023},
  publisher={Springer}
}

@article{mcrae2025nonconvex,
  author  = {McRae, Andrew D.},
  title   = {Phase retrieval and matrix sensing via benign and overparametrized nonconvex optimization},
  journal = {IEEE Transactions on Information Theory},
  volume  = {72},
  number  = {6},
  pages   = {4203--4220},
  year    = {2026},
  doi     = {10.1109/TIT.2026.3684748}
}

@inproceedings{qin2024general,
  title     = {A general algorithm for solving rank-one matrix sensing},
  author    = {Qin, Lianke and Song, Zhao and Zhang, Ruizhe},
  booktitle = {International Conference on Artificial Intelligence and Statistics},
  volume    = {238},
  series    = {Proceedings of Machine Learning Research},
  pages     = {757--765},
  year      = {2024},
  publisher = {PMLR}
}

@article{kim2024robust,
  title={Robust phase retrieval by alternating minimization},
  author={Kim, Seonho and Lee, Kiryung},
  journal={IEEE Transactions on Signal Processing},
  volume={73},
  pages={40--54},
  year={2024},
  publisher={IEEE}
}

@inproceedings{maunu2024acceleration,
  author    = {Tyler Maunu and Martin Molina-Fructuoso},
  title     = {Acceleration and Implicit Regularization in {Gaussian} Phase Retrieval},
  booktitle = {Proceedings of the 27th International Conference on Artificial Intelligence and Statistics},
  series    = {Proceedings of Machine Learning Research},
  volume    = {238},
  pages     = {4060--4068},
  year      = {2024},
  publisher = {PMLR}
}

@article{kingston2023optimizing,
  title={Optimizing nonconfigurable, transversely displaced masks for illumination patterns in classical ghost imaging},
  author={Kingston, Andrew M and Aminzadeh, Alaleh and Roberts, Lindon and Pelliccia, Daniele and Svalbe, Imants D and Paganin, David M},
  journal={Physical Review A},
  volume={107},
  number={2},
  pages={023524},
  year={2023},
  publisher={APS}
}

@book{kahane1985some,
  title     = {Some Random Series of Functions},
  author    = {Kahane, Jean-Pierre},
  volume    = {5},
  series    = {Cambridge Studies in Advanced Mathematics},
  edition   = {2},
  year      = {1985},
  publisher = {Cambridge University Press}
}

@article{gao2021phase,
  title={Phase retrieval for sub-{Gaussian} measurements},
  author={Gao, Bing and Liu, Haixia and Wang, Yang},
  journal={Applied and Computational Harmonic Analysis},
  volume={53},
  pages={95--115},
  year={2021},
  publisher={Elsevier}
}

@article{petrov2007lower,
  title={On lower bounds for tail probabilities},
  author={Petrov, Valentin V},
  journal={Journal of Statistical Planning and Inference},
  volume={137},
  number={8},
  pages={2703--2705},
  year={2007},
  publisher={Elsevier}
}

@article{gilles2025stable,
  title   = {Stable low-rank matrix recovery from 3-designs},
  author  = {Gilles, Timm},
  journal = {Applied and Computational Harmonic Analysis},
  volume  = {84},
  pages   = {101887},
  year    = {2026},
  doi     = {10.1016/j.acha.2026.101887}
}

@article{koren2009matrix,
  title={Matrix factorization techniques for recommender systems},
  author={Koren, Yehuda and Bell, Robert and Volinsky, Chris},
  journal={Computer},
  volume={42},
  number={8},
  pages={30--37},
  year={2009},
  publisher={IEEE}
}

@article{millane1990phase,
  title={Phase Retrieval in Crystallography and Optics},
  author={Millane, R. P.},
  journal={Journal of the Optical Society of America A},
  volume={7},
  number={3},
  pages={394--411},
  year={1990}
}

@article{huang2026robust,
  title   = {Robust outlier bound condition to phase retrieval with adversarial sparse outliers},
  author  = {Huang, Gao and Li, Song and Xu, Hang},
  journal = {Applied and Computational Harmonic Analysis},
  volume  = {80},
  pages   = {101819},
  year    = {2026},
  doi     = {10.1016/j.acha.2025.101819},
  publisher = {Elsevier}
}

@article{qu2020convolutional,
  title     = {Convolutional phase retrieval via gradient descent},
  author    = {Qu, Qing and Zhang, Yuqian and Eldar, Yonina C. and Wright, John},
  journal   = {IEEE Transactions on Information Theory},
  volume    = {66},
  number    = {3},
  pages     = {1785--1821},
  year      = {2020},
  doi       = {10.1109/TIT.2019.2950717},
  publisher = {IEEE}
}

@article{li2025truncated,
  title     = {Truncated amplitude flow with coded diffraction patterns},
  author    = {Li, Huiping and Li, Jiayi},
  journal   = {Inverse Problems},
  volume    = {41},
  number    = {1},
  pages     = {015002},
  year      = {2025},
  doi       = {10.1088/1361-6420/ad99f8},
  publisher = {IOP Publishing}
}

@article{aminzadeh2023mask,
  title   = {Mask design, fabrication, and experimental ghost imaging applications for patterned {X}-ray illumination},
  author  = {Aminzadeh, Alaleh and Roberts, Lindon and Young, Benjamin and Chiang, Cheng I. and Svalbe, Imants D. and Paganin, David M. and Kingston, Andrew M.},
  journal = {Optics Express},
  volume  = {31},
  number  = {15},
  pages   = {24328--24346},
  year    = {2023},
  doi     = {10.1364/OE.495024},
  publisher = {Optica Publishing Group}
}

@article{liu2010interior,
  title={Interior-point method for nuclear norm approximation with application to system identification},
  author={Liu, Zhang and Vandenberghe, Lieven},
  journal={SIAM Journal on Matrix Analysis and Applications},
  volume={31},
  number={3},
  pages={1235--1256},
  year={2010},
  publisher={SIAM}
}

\end{document}